\newtheorem*{rep@theorem}{\rep@title}
\newcommand{\newreptheorem}[2]{%
\newenvironment{rep#1}[1]{%
 \def\rep@title{#2 \ref{##1}}%
 \begin{rep@theorem}}%
 {\end{rep@theorem}}}
\newtheorem{theorem}{Theorem}[section]
\newtheorem{lemma}[theorem]{Lemma}
\newtheorem{proposition}[theorem]{Proposition}
\newtheorem{corollary}[theorem]{Corollary}
\newtheorem*{theorem*}{Theorem}
\theoremstyle{remark}
\newtheorem{remark}[theorem]{Remark}
\newtheorem{definition}[theorem]{Definition}
\def\Lem#1{\noindent {\bf Lemma~\ref{L:#1}.}}
\def\Thm#1{\noindent {\bf Theorem~\ref{T:#1}.}}
\def\lem#1{Lemma~\ref{L:#1}}
\def\prop#1{Proposition~\ref{P:#1}}
\def\thm#1{Theorem~\ref{T:#1}}
\def\cor#1{Corollary~\ref{C:#1}}
\def\defn#1{Definition~\ref{D:#1}}
\def\sec#1{Section~\ref{S:#1}}
\newcommand{\Z}{\mathbb{Z}}
\newcommand{\Q}{\mathbb{Q}}
\newcommand{\N}{\mathbb{N}}
\newcommand{\im}{\operatorname{im}}
\newcommand{\aut}{\operatorname{Aut}}
\newcommand{\lup}[1]{\,\,^{#1}\hspace{-.03cm}}
\newcommand{\bd}{\partial}
\newcommand{\w}{\omega}
\def\figr#1{Figure~\ref{F:#1}}
\def\lem#1{Lemma~\ref{L:#1}}
\def\prop#1{Proposition~\ref{P:#1}}
\def\thm#1{Theorem~\ref{T:#1}}
\def\cor#1{Corollary~\ref{C:#1}}
\def\defn#1{Definition~\ref{D:#1}}
\numberwithin{equation}{section}
\begin{document}
\title[A new filtration of the Magnus kernel]{A new filtration of the Magnus kernel of the Torelli group}

\author{R. Taylor McNeill}

\address{Department of Mathematics \\ University of Evansville \\ 1800 Lincoln Ave. \\Evansville, IN 47722\\ USA}
\email{rm217@evansville.edu}

%\thanks{This research was supported by NSF Grant DMS-1234567}

\date{\today}

%\subjclass[2010]{57N65}

%\keywords{$C^*$-algebras, operator spaces, completely positive maps}

\begin{abstract}

For a oriented genus $g$ surface with one boundary component, $S$, the Torelli group is the group of orientation
preserving homeomorphisms of $S$ that induce the identity on
homology.  The Magnus representation of the Torelli group represents the action
on $F/F''$ where $F=\pi_1(S)$ and $F''$ is the second term of the
derived series.   We show that the kernel of the Magnus representation, $Mag(S)$,
is highly non-trivial and has a rich structure as a group.
Specifically, we define an infinite filtration of $Mag(S)$ by subgroups,
called the higher order Magnus subgroups, $M_k(S)$.  We develop methods for generating nontrivial mapping classes in $M_k(S)$ for all $k$ and $g \ge 2$.  We show that for each
$k$ the quotient $M_k(S)/M_{k+1}(S)$ contains a subgroup isomorphic to a lower central series quotient of free groups $E(g-1)_k/E(g-1)_{k+1}$.  Finally We show that for $g \ge 3$ the quotient $M_k(S)/M_{k+1}(S)$ surjects onto an infinite rank torsion free abelian group. To do this, we
define a Johnson--type homomorphism on each higher order Magnus
subgroup quotient and show it has a highly non-trivial image.
\end{abstract}

\maketitle

%%%%%%%%%%%%%%%%%%%%%%%%%%%%%%%%%%%%%%%%%%%%%%%%
\section{Introduction}
\pagenumbering{arabic} \setcounter{page}{1}
%%%%%%%%%%%%%%%%%%%%%%%%%%%%%%%%%%%%%%%%%%%%%%%%

%%%%%%%%%%%%%%%%%%%%%%%%%%%%%%%%%%%%%%%%%%%%%%%%
\subsection{Background}
%%%%%%%%%%%%%%%%%%%%%%%%%%%%%%%%%%%%%%%%%%%%%%%%

Let $S$ be a closed orientable surface of genus $g$ with 1 boundary component (we will sometimes denote this surface by $S_g$ when it is necessary to be precise about the genus of the surface).  The \emph{mapping class group} of $S$, denoted $Mod(S)$ is the group of isotopy classes of orientation preserving homeomorphisms of $S$ which fix the boundary pointwise.  Two homeomorphisms represent the the same element (called a \emph{mapping class}) if they are isotopic maps where the isotopy also fixes the boundary pointwise.

We study the mapping class group through an analysis of the fundamental group of the surface.  As we restrict to maps which fix the boundary of $S$ pointwise, by choosing a basepoint $*$ on the boundary of $S$, a homeomorphism $f: S \rightarrow S$ induces an automorphism $f_*:\pi_1(S, *) \rightarrow \pi_1(S, *)$.  This correspondence induces a map
$$
Mod(S) \hookrightarrow \aut(\pi_1(S,*))
$$
which yields an injective homomorphism.  It is important to note that for surfaces with boundary, $\pi_1(S,*)$ is a free group.  For convenience of notation, we will henceforth denote this free group by $F$.  Note that $\aut{F}$ is quite complicated.   Hence to effectively employ this homomorphism we study mapping classes which approximate the identity automorphism.  More specifically, we study subgroups of the form $\ker\left( Mod(S) \rightarrow \aut(F/H)\right)$ where $H$ is a characteristic subgroup of $F$.  

Given a group $G$, the \emph{lower central series of G}, $\{G_n\}$ is given inductively by $G_1=G$, $G_k=[G_{k-1},G]$, where $[G_{k-1},G]$ is the subgroup of $G$ generated by elements of the form $aba^{-1}b^{-1}$, $a \in G, \,\, b \in G_{k-1}$.  

The mapping classes which act trivially modulo terms of the lower central series of $F$ form the well--studied Johnson subgroups of the mapping class group.  

\begin{definition}
The $k^{th}$ Johnson subgroup is the subgroup of the mapping class group given by
$J_k(S)=\ker(Mod(S) \rightarrow \aut(F/F_k))$.  
\end{definition}

Note that for $n>k$, as $F_n \subset F_k$ the map from $Mod(S)$ to $\aut(F/F_n)$ factors through $\aut(F/F_k)$:
$$
\xymatrix
{Mod(S) \ar[rr] \ar[rd]& &\aut(F/F_k)\\
&\aut(F/{F_n}) \ar[ru]&}
$$
Hence $\ker(Mod(S) \rightarrow \aut(F/F_n)) \subset \ker(Mod(S) \rightarrow \aut(F/F_k))$ and thus $J_n(S) \subset J_k(S)$.  We achieve a filtration of the Torelli group:
$$
Mod(S)=J_1(S) \supset J_2(S) \supset \cdots \supset J_k(S) \cdots 
$$

The second term of this filtration, $J_2(S)$ is the subgroup of the mapping class group which acts trivially on the homology of $S$.  This subgroup is more commonly known as the Torelli group and frequently denoted $\mathcal{I}$.  The Torelli group plays a crucial role in the study of mapping class groups of surfaces as the quotient $Mod(\Sigma)/ \mathcal{I}(\Sigma)$ is the well understood symplectic group, $Sp(2g, \Z)$.  

It is an easy task to define filtrations of the Torelli group, however the filtration by Johnson subgroups has been integral in their study.  The Johnson subgroup filtration earns its important place in the study of mapping class groups for the many available tools that can be employed for their study.  One class of tools frequently used in exploring the Johnson subgroups is the Johnson homomorphisms.  While these homomorphisms can be defined in a variety of ways, for the course of this paper we find the following definition of the Johnson subgroups to be the most convenient.  

For this definition, it is helpful to note for $f \in J_k(S)$ $f(x)\equiv x \mod F_k$ for all $x \in F$.  Equivalently, $f(x)x^{-1} \in F_k$ for all $x \in F$.  

\begin{definition}
Let $[x] \in H_1(S)$ and let $x$ be an element of the fundamental group realizing the homology class $[x]$.  The $k^{th}$ Johnson homomorphism
$$
\tau_k: J_k(S) \rightarrow Hom(H_1(S), F_k/F_{k+1})
$$
 is given by $\tau_k(f)=([x]\mapsto [f(x)x^{-1}])$. 
\end{definition}

While this definition provides for easy calculations, it does not provide much clarity for why such a homomorphism is well defined.  For a more thorough treatment, see \cite{J1}.%%%%%%%reference to who proved well definedness?

\begin{remark}It is important to note that $\ker \tau_k= J_{k+1}(S)$.  That $\ker \tau_k\supset J_{k+1}(S)$ can be readily seen as for $f \in J_{k+1}(S)$, $f(x)x^{-1} \in F_{k+1}$, and hence $f(x)x^{-1}$ is trivial in $F_k/F_{k+1}$ for all $[x]$.  To see that $\ker \tau_k\subset J_{k+1}(S)$, note that if $f \in \ker \tau_k$, then $f(x)x^{-1}\in F_{k+1}$ for all classes $[x]$.  Thus $f(x)=x \mod F_{k+1}$ and therefore $f \in \ker(Mod(S) \rightarrow \aut(F/F_k))$.  Thus $f \in J_{k+1}(S)$.

This fact provides an enlightening result when performing Johnson homomorphism computations.  If $f$ is an element of $J_k(S)$ such that $\tau_k(f)\ne 0$, then $f \notin J_{k+1}(S)$.  Thus computing $\tau_k(f) \ne 0$ pins the precise location of $f$ in the Johnson filtration to $J_k(S)/J_{k+1}(S)$.
\end{remark}

Of particular interest to the study of mapping class groups is the Magnus representation of the Torelli group, which can be defined as follows.  Given a basis, $\{x_1 , \dots , x_n\}$, for $F$, the Magnus representation of the Torelli group is map which sends a mapping class $f\in Mod(S)$ to a $2g \times 2g$ matrix with entries in $\Z H_1(S)$ namely,
$$
f \mapsto \left(\phi \left(\frac{\partial f(x_i)}{\partial x_j}\right)\right)_{i,j}.
$$
where $\frac{\bd f_*(x_i)}{\bd x_j}$ is the Fox calculus derivative of $f_*(x_i)$ with respect to $x_j$ and  $\phi: \Z[F] \rightarrow \Z[H_1(S)]$ is the natural projection. 
However, the kernel of the Magnus representation, $Mag(S)$ also has a characterization in terms of induced automorphisms \cite{CF}.  Specifically,
$$ 
Mag(S)=\ker \left(Mod(S) \rightarrow \aut(F/F''\right)
$$
where $F''=\left[[F,F],[F,F] \right]$ is the second commutator subgroup of $F$.

While the Magnus representation was first introduced in the 1980s, for many years it was unknown whether the the Magnus representation was a faithful representation of the Torelli group.    This remained an open question until 2001 when Suzuki constructed an explicit mapping class contained in $Mag(S)$ for genus $g \ge 2$  \cite{Suz}.  In 2009 Church and Farb proved that in fact the Magnus kernel is quite large, exhibiting infinitely many independent elements of the Magnus kernel \cite{CF}.  In this paper we demonstrate that $Mag_g$ is larger still, possessing a nontrivial filtration by subgroups, called the higher-order Magnus subgroups,
$$ 
Mag(S)= M_2(S) \supset M_3(S) \supset M_4(S)\supset \cdots 
$$ 
for which the successive quotients are themselves infinitely generated.  The previous examples of Church and Farb are all contained in ${M_2(S)}\setminus{M_3(S)}$.  Hence the higher-order Magnus subgroups reveal new structure in the Magnus kernel.

%%%%%%%%%%%%%%%%%%%%%%%%%%%%%%%%%%%%%%%%%%%%%%%%
\subsection{Summary of results}
%%%%%%%%%%%%%%%%%%%%%%%%%%%%%%%%%%%%%%%%%%%%%%%%
The Johnson subgroups have provided a key tool for studying the Torelli group.  While there is a clear similarity between the algebraic characterizations of the Magnus kernel and the Torelli group, attempts to define analogous tools for studying the Magnus kernel have been limited.  

For any characteristic subgroup $H$ of $F$, we define an infinite family of subgroups, $J^H_k(S)$, called the higher-order Johnson subgroups.  These subgroups form a filtration of the subgroup $\ker(Mod(S) \rightarrow \aut(F/H))$ of the mapping class group.  The higher-order Johnson subgroup filtration is a generalization of the Johnson subgroup filtration of the Torelli group.  In the special case where $H=[F,F]$, we call these subgroups the higher-order Magnus subgroups, as they yield a filtration of the Magnus kernel.  We show that these higher-order Johnson subgroups have much of the natural structure known for the Johnson subgroups.  These properties include the result that the higher-order Johnson subgroups are equipped with a homomorphism, analogous to the Johnson homomorphisms.

\Thm{welldefined}
\emph{For each characteristic subgroup $H \subset F$ the higher-order Johnson homomorphisms, 
$$\tau^H_k: J^H_k(S) \rightarrow Hom_{\Z [F/H]}(H/H', H_k/H_{k+1}),$$
are well defined, group homomorphisms for $k \ge 2$.}

In the special case of the Magnus subgroups, $M_k(S)$ we give an explicit way of constructing examples in $M_k(S)$ from known examples of mapping classes in $J_k(D)$ where $D$ is a disk with $n$ holes.

\Lem{inclusion}
\emph{  Let $i:D \rightarrow S$ be an embedding such that each boundary component of $i(D)$ is either separating in $S$, or the boundary component of $S$.  Let $[f]\in Mod(D)$ and let $f$ be a homeomorphism representing $[f]$.  Let $f':S \rightarrow S$ be the homeomorphism defined by
$$
f'(x)= \left\{ \begin{array}{ll} f(x) & x \in D\\ x & x \in S \setminus D \end{array} \right.
$$
then if $[f] \in J_k(D)$, $[f'] \in M_k(S)$. }

Using this construction, we describe an explicit subgroup of $M_k(S)/M_{k+1}(S)$ which is isomorphic to a lower central series quotient of free groups.  For $E(n)$ the free group on $n$ generators, we show the following result.

\Thm{containsbraids}
\emph{ Let $S_g$ be an orientable surface with genus $g \ge 3$.  Then the  map $\rho: E(g-1) \rightarrow Mod(S_g)$ induces a monomorphism on the quotients  $\overline{\rho}:E(g-1)_k/ E(g-1)_{k+1} \hookrightarrow M_k(S_g)/M_{k+1}(S_g)$ for all $k$.}

A detailed construction of the map $\rho$ is given in \sec{mainthm}.

Finally, we construct an epimorphism onto an infinite rank torsion free abelian subgroup of $\frac{F'_k}{F'_{k+1}}$, where $F'$ is the commutator subgroup of $F$.  Using Magnus homomorphism computations we prove:

\Thm{infinitelygenerated}
\emph{ Let $S$ be an orientable surface with genus $g \ge 3$.  Then the successive quotients of the Magnus filtration $\frac{M_k(S)}{M_{k+1}(S)}$ surject onto an infinite rank torsion free abelian subgroup of $\frac{F'_k}{F'_{k+1}}$ via the map  
$$
\frac{M_k(S)}{M_{k+1}(S)} \stackrel{\tau'_k(-)[c_6,c_2]}{\longrightarrow}\frac{F'_k}{F'_{k+1}}
$$
where $c_6$ and $c_2$ are generators in the carefully chosen basis for $F$ shown in \figr{infgenbasis}.}

These results establish key tools for working with the Magnus subgroups and unveil new structure in this subgroup of the Torelli group.

%%%%%%%%%%%%%%%%%%%%%%%%%%%%%%%%%%%%%%%%%%%%%%%%
\subsection{Outline}
%%%%%%%%%%%%%%%%%%%%%%%%%%%%%%%%%%%%%%%%%%%%%%%%

We begin in Section 2 by providing a detailed discussion of generalized Johnson homomorphisms on surfaces with multiple boundary components.  Johnson subgroups of surfaces with multiple boundary components have been employed before, but a precise and detailed treatment of these cases have not yet appeared in the literature.  %We also present some new results showing some properties of traditional Johnson subgroups to apply to surfaces with multiple boundary components.

In Section 3 we define generalizations of the Johnson subgroups and homomorphisms called the \emph{higher-order Johnson subgroups and homomorphisms}.  A specific case of these generalized Johnson subgroups are the higher-order Magnus subgroups.  These Magnus subgroups provide a filtration of the Magnus kernel and are the central focus of our study.
 
Section 4 contains some group theoretic results that are useful in proving our main theorem.  These results primarily focus on the lower central series quotients of an infinitely generated free group, $E$, and its commutator subgroup, $E'$.  We provide a generalization of the basis theorem for lower central series quotients of free groups which applies to groups which are infinitely generated. We also explore the $\Z[F/F']$ module structure of $F'_k/F'_{k+1}$ use for computing Magnus homomorphisms.

In Section 5 we prove our main results.   We develop a correspondence between Magnus subgroups on surfaces with one boundary component and Johnson subgroups on disks.  We use this correspondence to explore the structure and size of the successive quotients of the higher-order Magnus subgroups $M_k(S)/M_{k+1}(S)$.  We demonstrate that there is a specific subgroup of $M_k(S_g)/M_{k+1}(S_g)$ that is isomorphic to the finitely generated free abelian group $E(g-1)_k/E(g-1)_{k+1}$ where $E(g-1)$ is the free group on $g-1$ generators and $S_g$ is an oriented surface of genus $g$.  We also show that successive quotients of the higher-order Magnus subgroups $M_k/M_{k+1}$ are infinitely generated by displaying a surjection to a infinite rank torsion free abelian group. 

%%%%%%%%%%%%%%%%%%%%%%%%%%%%%%%%%%%%%%%%%%%%%%%%
\section{Johnson subgroups and homomorphisms for surfaces with multiple boundary components}
%%%%%%%%%%%%%%%%%%%%%%%%%%%%%%%%%%%%%%%%%%%%%%%%
Through the course of this paper we will employ Johnson homomorphisms on surfaces with multiple boundary components.  There are many variations for Johnson subgroups with multiple boundary components.  In addition, there are many cases in which surfaces with multiple boundary components are overlooked in the study of mapping class groups.  Resources detailing definitions and results concerning surfaces with multiple boundary components are sparse difficult to find.  Treatment of Johnson subgroups and Johnson homomorphisms for surfaces with multiple boundary components can be found in \cite{CHH}, \cite{P}, \cite{P1}.  We will take this opportunity to address an analog of the Johnson machinery in detail for surfaces with multiple boundary components, through a perspective compatible with our following definitions of higher-order Johnson subgroups.

Let $\Sigma$ be an orientable surface with $m+1$ boundary components.  Choose an ordering of the boundary components $b_0, \dots, b_m$.  Let $p_i$ be a point on the $i^{th}$ boundary component (we choose $p_0$ to be the basepoint for $\pi_1(\Sigma)$).  Choose arcs $A_i$ which originate from $p_0$ and terminate at $p_i$ for each $0<i<m$.
\begin{definition}\label{D:multipleboundary}
Let $f \in Mod(\Sigma)$.  Then $f$ is in the $k^{th}$ Johnson subgroup of $\Sigma$, $J_k(\Sigma)$ if $f$ satisfies the following two properties:
\begin{enumerate}
\item For $\gamma \in \pi_1(\Sigma)$, $f_*(\gamma)\gamma^{-1}\in \pi_1(\Sigma)_k$. 
\item For all $A_i$, $\left[f(A_i)\overline{A_i}\right] \in \pi_1(\Sigma)_k$
\end{enumerate}
where $\overline{A_i}$ is the reverse of the path $A_i$. 
\end{definition}

Note that when $m=0$ we obtain from this definition the standard Johnson subgroups for a surface with a single boundary component.  Note also that the combination of properties (1) and (2) show that the Johnson subgroups on surfaces with multiple boundary components are independent of the ordering of the boundary components, the choices of points $p_i$ and the choices of arcs $A_i$.

Given this definition of Johnson subgroups on surfaces with multiple boundary components, we would like to be able to easily generate examples of elements in the Johnson subgroups for these surfaces.  Below is a generalization of a result of Morita \cite{M}, which allows us to generate examples in the Johnson subgroups via commutators.

\begin{lemma}\label{L:commutator}
Let $\Sigma$ be an oriented surface with at least one boundary component.  Let $f_k \in J_k(\Sigma)$ and $f_l \in J_l(\Sigma)$.  Then the commutator $[f_k, f_l]$ is contained in $J_{k+l-1}(\Sigma)$.
\end{lemma}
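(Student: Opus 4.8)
The plan is to reduce the multi-boundary statement to the single-boundary case of Morita's identity by carefully tracking how commutators of mapping classes act on both the fundamental group elements $\gamma \in \pi_1(\Sigma)$ and the arcs $A_i$ appearing in \defn{multipleboundary}. First I would recall the key algebraic fact underlying Morita's result: if $\varphi, \psi$ are automorphisms of a group $G$ with $\varphi(g)g^{-1} \in G_k$ and $\psi(g)g^{-1} \in G_l$ for all $g$, then $[\varphi,\psi](g)g^{-1} \in G_{k+l-1}$ for all $g$. This is a purely group-theoretic lemma proved by a commutator-calculus estimate: writing $\varphi(g) = g u_g$ with $u_g \in G_k$ and $\psi(g) = g v_g$ with $v_g \in G_l$, one expands $[\varphi,\psi](g)$ and uses that the lower central series is a filtration, i.e. $[G_k, G_l] \subseteq G_{k+l}$, together with the fact that $g \mapsto u_g$ is a ``crossed homomorphism'' modulo higher terms. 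Since $f_k, f_l$ satisfy property (1) with $G = \pi_1(\Sigma)$ (still a free group, as $\Sigma$ has boundary), this immediately gives property (1) for $[f_k, f_l]$ with index $k+l-1$.

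The remaining work is property (2): I need to show $[f_k(A_i)\overline{A_i}] \in \pi_1(\Sigma)_{k+l-1}$ for each arc $A_i$. The natural approach is to treat the arcs on the same footing as loops by a standard trick: the concatenation $f(A_i)\overline{A_i}$ is a loop based at $p_0$, and for a composition $f \circ g$ one has the cocycle-type formula $[(f\circ g)(A_i)\overline{A_i}] = [f(A_i)\overline{A_i}] \cdot {}^{?}[\,\cdot\,]$ — more precisely, $(fg)(A_i)\overline{A_i} \simeq f(g(A_i)\overline{A_i}) \cdot (f(A_i)\overline{A_i})$, and $f(g(A_i)\overline{A_i})$ differs from $g(A_i)\overline{A_i}$ by an element controlled by property (1) for $f$. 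So if I set $a_i(f) := [f(A_i)\overline{A_i}] \in \pi_1(\Sigma)$, the assignment $f \mapsto a_i(f)$ behaves like the loop-case crossed homomorphism: $a_i(fg) \equiv f_*(a_i(g))\, a_i(f)$, and modulo $\pi_1(\Sigma)_m$ this is additive enough to run exactly the same commutator estimate as in step one. Feeding in $a_i(f_k) \in \pi_1(\Sigma)_k$ and $a_i(f_l) \in \pi_1(\Sigma)_l$ then yields $a_i([f_k,f_l]) \in \pi_1(\Sigma)_{k+l-1}$.

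Concretely, I would organize the proof as: (a) state and prove (or cite from \cite{M}) the group-theoretic commutator lemma for filtration-compatible automorphisms; (b) apply it to $\pi_1(\Sigma)$ to get property (1); (c) establish the composition formula for the arc-terms $a_i(f)$, checking that the ``error'' introduced by applying $f_*$ lies in the right term of the lower central series because of property (1); (d) observe that $a_i$ satisfies the same formal identities as the loop crossed homomorphism, so the same estimate from (a) applies verbatim to give property (2). Then $[f_k, f_l]$ satisfies both defining conditions of $J_{k+l-1}(\Sigma)$, completing the proof.

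The main obstacle I anticipate is step (c): making the arc-composition bookkeeping rigorous, since $A_i$ is a path and not a loop, so one must be careful about basepoints and about which automorphism ($f_*$ versus $g_*$) acts on which piece when expanding $(fg)(A_i)\overline{A_i}$ up to homotopy rel endpoints. Once the identity $a_i(fg) \equiv f_*(a_i(g))\,a_i(f) \pmod{\pi_1(\Sigma)_{k+l}}$ is pinned down, the rest is a routine transcription of Morita's argument. A secondary subtlety is confirming that property (1) for $f_k, f_l$ is genuinely enough to control the conjugation action $f_*$ on elements of $\pi_1(\Sigma)_k$ (namely that $f_*$ acts trivially on $\pi_1(\Sigma)_k/\pi_1(\Sigma)_{k+1}$ when $f \in J_l(\Sigma)$ with $l \geq 2$), but this follows from property (1) and the fact that the lower central series is a central filtration.
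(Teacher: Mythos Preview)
Your proposal is correct and follows essentially the same route as the paper. Both arguments cite Morita for property~(1), then handle the arc condition by expanding $[f_k,f_l](A_i)\overline{A_i}$ via the identity $a_i(fg)=f_*(a_i(g))\cdot a_i(f)$ (the paper writes this out as a step-by-step homotopy computation rather than naming it a crossed homomorphism), and finally reduce the resulting expression modulo $F_{k+l-1}$ using Morita's Lemma~3.2. Your crossed-homomorphism packaging is a tidier way to see exactly the computation the paper carries out explicitly; the substance and the key inputs (Morita's Lemma~3.2(i),(ii)) are identical.
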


\begin{proof}
It suffices to prove the statement for $k \le l$.  To show that $[f_k, f_l]$ is contained in $J_{k+l-1}(\Sigma)$, we must show the following two conditions are satisfied:

\begin{itemize}
\item[(i)] For each arc $A_i$ connecting the basepoint to the $i^{th}$ boundary component, $[f_k, f_l](A_i) \overline{A_i}\in F_{k+l-1}$.

\item[(ii)] For all $x \in \pi_1(\Sigma), \,\, [f_k,f_l](x) x^{-1} \in F_{k+l-1}$.
\end{itemize}

A result of Morita (\cite{M} Corollary 3.3) shows condition (ii) to be satisfied in the case where $\Sigma$ is a closed surface with a marked point.  In addition, Morita \cite{M} shows when $\Sigma$ is a closed surface with a marked point, $y \in F_l$, $f_{k*}(y)y^{-1} \in F_{k+l-1}$.  While these results are not stated for surfaces with multiple boundary components, the proofs employ only the property that for $\gamma \in \pi_1(\Sigma)$ and $f \in J_n(\Sigma)$, $f_*(\gamma)\gamma^{-1}\in \pi_1(\Sigma)_n$.  As this property also holds for surfaces $\Sigma$ with multiple boundary components, identical arguments show analogous results for the case of multiple boundary components.  We will employ these results for surfaces $\Sigma$ with multiple boundary components with no further proof. 

It suffices to show that  $[f_k, f_l](A_i) \overline{A_i}\in F_{k+l-1}$.  For this we follow the structure of the aforementioned corollary.  As $f_k \in J_k(\Sigma)$, $f_k (A_i) \overline{A_i} \in F_k$.  Let $x_k=f_k(A_i)\overline{A_i}\in F_k$ and note that $f_k(A_i)$ is homotopic rel endpoints to the path $x_k A_i$.  Applying $f_k^{-1}$ to this expression we find $A_i\simeq f_k^{-1}(x_k) f_k^{-1}(A_i) $ or $f_k^{-1}(\overline{x_k})A_i \simeq f_k^{-1}(A_i) $.  Similarly we know $f_l \in J_l(\Sigma)$, $f_l (A_i) \overline{A_i} \in F_l$.  Defining $x_l=f_l(A_i)\overline{A_i}\in F_l$ we have $f_l(A_i)\simeq  x_l A_i$ and $f_l^{-1}(\overline{x_l})A_i \simeq f_l^{-1}(A_i) $.  Using this we can perform the following computation:

\begin{eqnarray}
[f_k,f_l](A_i) &=& f_k f_lf_k^{-1}(f_l^{-1}(A_i)) \nonumber \\
   &\simeq&  f_k f_lf_k^{-1}(f_l^{-1}(x_l^{-1})A_i ) \nonumber \\
   &\simeq& f_k f_l (f_k^{-1} f_l^{-1}(x_l^{-1}) f_k^{-1}(A_i)) \nonumber\\
   &\simeq& f_k f_l (f_k^{-1} f_l^{-1}(x_l^{-1}) f_k^{-1}(x_k^{-1}) A_i) \nonumber\\
   &\simeq& f_k(f_l f_k^{-1} f_l^{-1}(x_l^{-1}) f_l f_k^{-1}(x_k^{-1}) f_l(A_i)) \nonumber\\
   &\simeq& f_k(f_l f_k^{-1} f_l^{-1}(x_l^{-1}) f_l f_k^{-1}(x_k^{-1}) x_lA_i) \nonumber\\
   &\simeq& [f_k,f_l] (x_l^{-1}) f_k f_l f_k^{-1}(x_k^{-1}) f_k(x_l) f_k(A_i) \nonumber\\
   &\simeq& [f_k,f_l] (x_l^{-1}) f_k f_l f_k^{-1}(x_k^{-1}) f_k(x_l) x_k A_i \nonumber
\end{eqnarray}

\bigskip

This gives us the following expression for the homotopy class of the loop $[f_k, f_l](A_i) \overline{A_i}$ in $\pi_1(\Sigma)$:

\begin{eqnarray}
&&\left[[f_k,f_l](A_i)\overline{A_i}\right]   = [f_{k*},f_{l*}] (x_l^{-1}) f_{k*} f_{l*} f_{k*}^{-1}(x_k) f_{k*}(x_l) x_k \nonumber\\
&&= [f_{k*},f_{l*}] (x_l^{-1}) x_l x_l^{-1}f_{k*} f_{l*} f_{k*}^{-1}(x_k^{-1})x_k x_l x_l^{-1} x_k^{-1} f_{k*}(x_l) x_l^{-1}x_k x_l [x_l^{-1},x_k^{-1}]\nonumber
\end{eqnarray}

\bigskip

As $k \le l$, $J_l(\Sigma) \subset J_k(\Sigma)$ and so $[f_k, f_l] \in J_k(\Sigma)$.  As shown in \cite{M}, lemma 3.2 (i), for $y \in F_l$, $f_{k*}(y)y^{-1} \in F_{k+l-1}$.  Thus $[f_{k*},f_{l*}] (x_l^{-1}) x_l \in F_{k+l-1}$.  Looking at this expression mod $F_{k+l-1}$ we have that $[f_k,f_l] (x_l^{-1}) x_l =1$.  By \cite{M} Lemma 3.2 (ii) the class of $f_k f_l f_k^{-1}(x_k^{-1})x_k$ is equal to that of $f_l(x_k^{-1})x_k$ and is thus also in $F_{k+l-1}$ by \cite{M} 3.2 (i).  Similarly, $f_k(x_l) x_l^{-1} \in F_{k+l-1}$. As  $[x_l^{-1},x_k^{-1}] \in [F_l,F_k] \subset F_{k+l}\subset F_{k+l-1}$ this term also reduces to 1 modulo $F_{k+l-1}$.  Since the entire expression is trivial mod $F_{k+l-1}$ it follows that $\left[[f_k, f_l](A_i)\overline{A_i}\right] \in F_{k+l-1}$.
\end{proof}

It is natural to seek an analog for the Johnson homomorphisms which apply to surfaces with multiple boundary components.  Let $\Delta$ be an open arc on $b_0$ originating at $p_0$.  Let $\overline{\Sigma}=\partial\left( \Sigma \times I \right)\setminus \left(\text{int}(\Delta \times I) \right)$.  Note that $\overline{\Sigma}$ is a doubled version of the surface $\Sigma$ with an added boundary component, as illustrated in \figr{sigmabar}.
\begin{figure}[h] %  figure placement: here, top, bottom, or page
   \centering
   \includegraphics[width=4.25in]{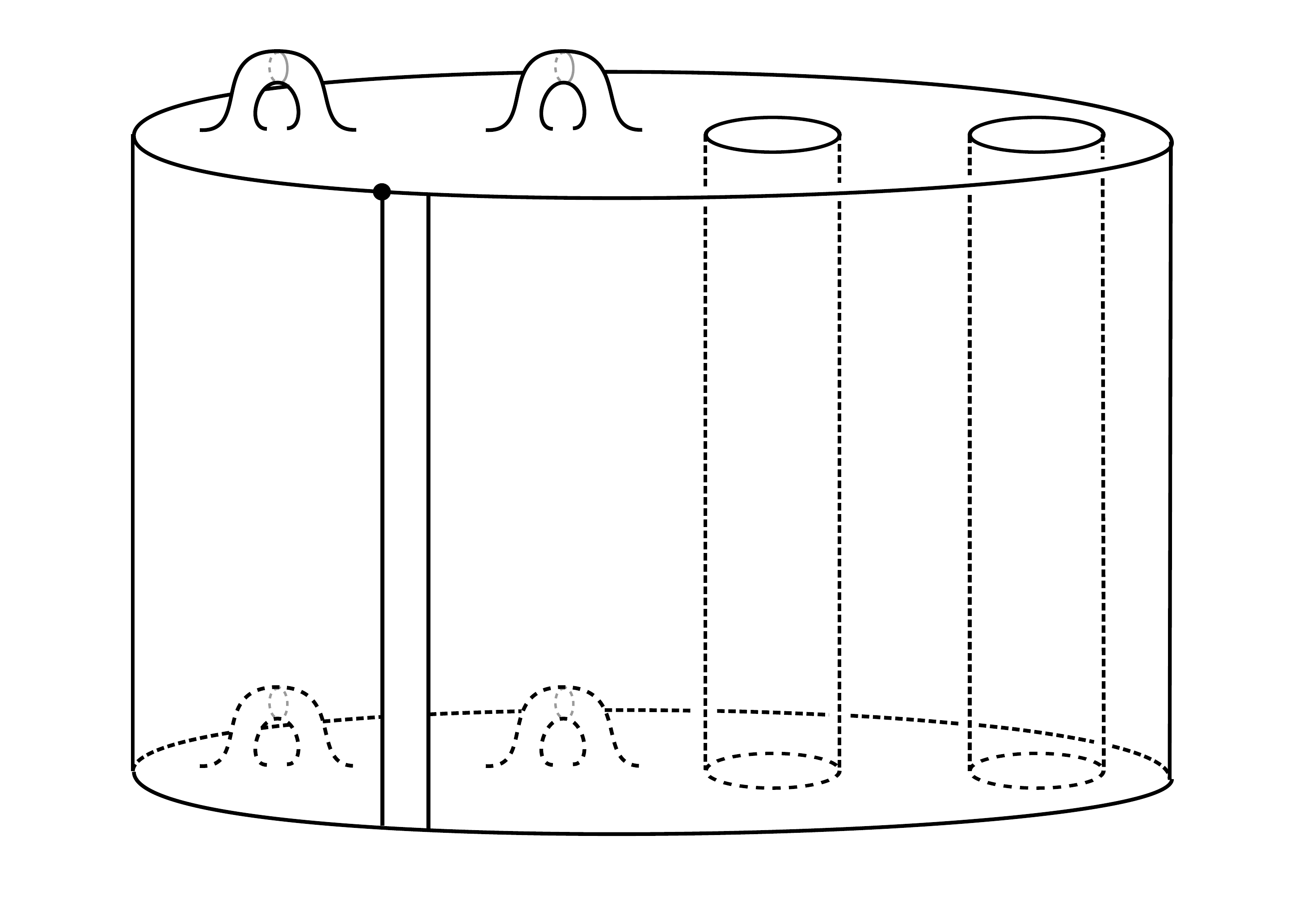}
   \put(-343,183){$\Sigma \times 0= i(\Sigma)$}
   \put(-240,162){$i(p_0)$} 
   \put(-215,174){$\Delta$}
   \put(-213,185){$\dots$}
   \put(-100, 185){$\dots$}
   \caption{An illustration of the doubled surface $\overline{\Sigma}$.}
   \label{F:sigmabar}
\end{figure}
Let $i:\Sigma \rightarrow \overline{\Sigma}$ be the natural embedding which sends $\Sigma$ to $\Sigma \times \{0\}$.  We give $\overline{\Sigma}$ an orientation that agrees with the orientation on $\Sigma$.  We will define the Johnson homomorphisms on $\Sigma$ via the Johnson homomorphisms on $\overline{\Sigma}$.

In order to do this, we first develop some algebraic tools to relate the homology and lower central series quotients of the fundamental groups of $\Sigma$ and $\overline{\Sigma}$.  These build on a  result of Stallings (\cite{S}, Theorem 7.3), reproduced below.  We first define an adaptation of the lower central series: the rational lower central series.  We employ this commutator series to gain insight on the lower central series of free groups.

\begin{definition}
Let $G$ be a group.  The rational lower central series of $G$, with terms $G^r_n$, is defined inductively by setting $G^r_1=G$ and where $G^r_{n+1}$ is the subgroup of $G$ generated by set $S=\{[x,u]|x \in G$, $u \in G^r_{n}\}$ and elements $w$ for which some power of $w$ is a product of elements in $S$. 
\end{definition}

More intuitively, $G^r_{n+1}$ is the smallest subgroup of $G^r_n$ such that $G^r_{n+1}$ is central in $G$ and $G/G^r_{n+1}$ is torsion free.  Note that for a free group $E$, the standard lower central series quotients $E/E_n$ are torsion free.  Thus for a free group $E$, the lower central series of $E$ coincides with its rational lower central series.

\begin{theorem}[Stallings]
If $f: A \rightarrow B$ a homomorphism of abelian groups inducing an isomorphism $f_*: H_1(A;\Q) \rightarrow H_1(B;\Q)$, and a surjective mapping $H_1(A,\Q) \twoheadrightarrow H_1(B,\Q)$.  Then for all finite $n$, $f$ induces isomorphisms
$$
(A^r_{n-1}/A^r_n) \otimes \Q \cong (B^r_{n-1}/B^r_n) \otimes \Q
$$
and for all $k$, $H_k(A/A^r_n) \cong H_k(B/B^r_n)$; $f$ induces embeddings $A/A^r_n \subset B/B^r_n$ and an embedding $A/A^r_{\omega} \subset B/B^r_{\omega}$ at the first infinite ordinal $\omega$. 
\end{theorem}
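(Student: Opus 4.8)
The plan is to argue by induction on $n$, comparing the rational lower central series of $A$ and of $B$ one graded piece at a time. The engine of the comparison is the five-term (Stallings--Hopf) exact sequence in rational homology attached to a central extension $1\to C\to E\to Q\to 1$ with $C$ central,
$$H_2(E;\Q)\to H_2(Q;\Q)\to C\otimes\Q\to H_1(E;\Q)\to H_1(Q;\Q)\to 0,$$
which I would apply to the canonical extension $1\to G^r_n/G^r_{n+1}\to G/G^r_{n+1}\to G/G^r_n\to 1$ for $G=A$ and $G=B$. By the very definition of the rational lower central series the kernel $G^r_n/G^r_{n+1}$ is central in $G/G^r_{n+1}$ and torsion free, so its rationalization is a $\Q$-vector space and the natural map $G^r_n/G^r_{n+1}\hookrightarrow (G^r_n/G^r_{n+1})\otimes\Q$ is injective. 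This last point is the source of every embedding (as opposed to isomorphism) conclusion: a map of torsion free abelian groups that becomes an isomorphism after $\otimes\,\Q$ is automatically injective, so a rational statement about a graded piece upgrades to an integral injectivity statement about the corresponding nilpotent quotient.

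First I would record the normalization that makes the sequences collapse. Since $G^r_n\subseteq G^r_2$ for $n\ge 2$ and $G/G^r_2$ is the maximal torsion free abelian quotient of $G$, one has $H_1(G/G^r_n;\Q)\cong H_1(G;\Q)$ for all $n\ge 2$; thus the hypothesis that $f$ is an $H_1$-isomorphism identifies the degree-one data of $A$ and $B$ once and for all and makes the right-hand map of the five-term sequence an isomorphism. The sequence therefore truncates to
$$H_2(G/G^r_{n+1};\Q)\to H_2(G/G^r_n;\Q)\to (G^r_n/G^r_{n+1})\otimes\Q\to 0,$$
exhibiting each graded piece as a cokernel built from the $H_2$ of the nilpotent quotients. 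The base case $n=2$ is then immediate: $(A^r_1/A^r_2)\otimes\Q=H_1(A;\Q)\cong H_1(B;\Q)=(B^r_1/B^r_2)\otimes\Q$, and torsion freeness promotes this to the embedding $A/A^r_2\hookrightarrow B/B^r_2$.

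For the inductive step I would carry the package that $f_n\colon A/A^r_n\to B/B^r_n$ is an $H_1(-;\Q)$-isomorphism, an $H_2(-;\Q)$-epimorphism, and injective, and that the graded map $(A^r_{n-1}/A^r_n)\otimes\Q\to(B^r_{n-1}/B^r_n)\otimes\Q$ is an isomorphism. Stacking the two truncated sequences into a commutative ladder joined by the maps induced by $f$, the $H_2$-epimorphism at level $n$ forces the next graded map $(A^r_n/A^r_{n+1})\otimes\Q\to(B^r_n/B^r_{n+1})\otimes\Q$ to be surjective by a diagram chase. The prototype for injectivity is the degree-two case: by naturality of the five-term sequence $f$ is the identity on $\Lambda^2 H_1(A;\Q)$, so $\partial_A=\partial_B\circ f_*$, and surjectivity of $f_*$ on $H_2(-;\Q)$ gives the matching of images $\operatorname{im}\partial_A=\operatorname{im}\partial_B$ and hence the isomorphism of graded pieces; the general level is the same mechanism with the $H_2$ of the level-$n$ quotients playing the role of the relations. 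Once the graded map is an isomorphism, feeding it and the injectivity of $f_n$ into the snake lemma for the two central extensions yields injectivity of $f_{n+1}$, that is, the embedding $A/A^r_{n+1}\hookrightarrow B/B^r_{n+1}$.

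The step I expect to be the main obstacle is the propagation of the $H_2$-epimorphism to level $n+1$, which is what keeps the induction running and which the five-term sequence alone cannot supply: the map $H_2(G;\Q)\to H_2(G/G^r_n;\Q)$ is typically not surjective, its cokernel being itself a graded piece of the series. To get past this I would compare the low-degree exact sequences of the Lyndon--Hochschild--Serre spectral sequences of the two towers of central extensions, using the graded isomorphisms already established (which pin down the $E^2$-terms $H_p(G/G^r_n;H_q(G^r_n/G^r_{n+1}))$), together with exactness of $-\otimes\Q$ and torsion freeness of every graded piece. The same spectral-sequence comparison, run in all degrees, gives the homology isomorphisms $H_k(A/A^r_n)\cong H_k(B/B^r_n)$. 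Finally, the compatible embeddings $A/A^r_n\hookrightarrow B/B^r_n$ assemble over the tower, and passing to the inverse limit at the first infinite ordinal yields the embedding $A/A^r_{\omega}\hookrightarrow B/B^r_{\omega}$. The delicate bookkeeping is precisely keeping the $H_2$-epimorphism alive at every finite stage while torsion freeness converts each rational isomorphism into the asserted integral embedding.
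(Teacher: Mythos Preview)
The paper does not prove this theorem; it is quoted verbatim as Stallings' result (\cite{S}, Theorem~7.3) and then used as a black box to derive Proposition~\ref{P:stallings}. So there is no ``paper's own proof'' to compare against. Note also that the statement as reproduced in the paper contains typos: ``homomorphism of abelian groups'' should read ``homomorphism of groups'', and the second hypothesis should be a surjection on $H_2(-;\Q)$, not $H_1(-;\Q)$; your proposal silently and correctly reads the hypotheses this way.

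As for the substance of your strategy, it is essentially the argument Stallings gives in his original paper: induction on $n$ using the five-term exact sequence attached to the central extensions $1\to G^r_n/G^r_{n+1}\to G/G^r_{n+1}\to G/G^r_n\to 1$, with torsion-freeness of the graded pieces upgrading rational isomorphisms to integral embeddings. Your identification of the main technical point, propagating the $H_2(-;\Q)$-epimorphism from level $n$ to level $n+1$ via a spectral-sequence comparison, is exactly right and is the step Stallings handles carefully. The only remark is that the final embedding at the first infinite ordinal is obtained not by an inverse limit over the tower but simply because $A^r_\omega=\bigcap_n A^r_n$, so injectivity of $A/A^r_\omega\to B/B^r_\omega$ follows directly from injectivity at every finite stage.
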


 We prove the following proposition employing Stalling's result.

\begin{proposition}\label{P:stallings}
Let $A$ and $B$ be groups with $H_2(A;\Q)=H_2(B; \Q)=0$.  Let $h: A \rightarrow B$ be a group homomorphism inducing an injection $H_1(A;\mathbb{Q}) \hookrightarrow H_1(B; \mathbb{Q})$, then for all $n$, $h$ induces an injection $A/A^r_n  \hookrightarrow B/B^r_n$. 
\end{proposition}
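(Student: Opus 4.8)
The strategy is to reduce to the situation covered by Stallings' theorem --- where the map is a rational homology isomorphism in degree one --- by enlarging the source of $h$ with a free group. Concretely, I will build a group $C$ that contains $A$ as a retract, together with a homomorphism $\Phi\colon C\to B$ extending $h$, such that $\Phi_*$ is an isomorphism on $H_1(-;\Q)$ and $H_2(C;\Q)=0$.

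First I would pick a $\Q$-subspace $W\subseteq H_1(B;\Q)$ with $H_1(B;\Q)=h_*H_1(A;\Q)\oplus W$, and let $\pi\colon H_1(B;\Q)\to W$ be the projection along $h_*H_1(A;\Q)$. Since the image of $H_1(B;\Z)$ in $H_1(B;\Q)$ spans over $\Q$, its image under $\pi$ spans $W$, so I can choose classes $\beta_j\in H_1(B;\Z)$ whose projections $\pi(\bar\beta_j)$ form a $\Q$-basis of $W$; lift each $\beta_j$ to an element $b_j\in B$. Let $E$ be the free group on generators $\{e_j\}$ indexed by the same set, put $C=A*E$, and let $\Phi\colon C\to B$ be given by $\Phi|_A=h$ and $\Phi(e_j)=b_j$. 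Then $H_1(C;\Q)=H_1(A;\Q)\oplus\bigoplus_j\Q e_j$, and by the choice of the $\beta_j$ the map $\Phi_*$ sends this isomorphically onto $h_*H_1(A;\Q)\oplus W=H_1(B;\Q)$. Moreover $H_2(C;\Q)=H_2(A;\Q)\oplus H_2(E;\Q)=0$, using that group homology of a free product splits as a direct sum in positive degrees and that the homology of a free group of any rank vanishes above degree one. Now $\Phi$ meets the hypotheses of the Stallings theorem quoted above (isomorphism on $H_1(-;\Q)$, with $H_2(C;\Q)=H_2(B;\Q)=0$), so $\Phi$ induces embeddings $C/C^r_n\hookrightarrow B/B^r_n$ for every $n$.

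It remains to check that the canonical inclusion $A\hookrightarrow C=A*E$ induces an injection $A/A^r_n\hookrightarrow C/C^r_n$; composing this with the embedding above and recalling $\Phi|_A=h$ then finishes the proof, since the composite is the map induced by $h$. For this I would use the retraction $r\colon C\to A$ that kills $E$. Functoriality of the rational lower central series gives $r(C^r_n)\subseteq A^r_n$ and also $A^r_n\subseteq C^r_n$; since $r|_A=\mathrm{id}$, these combine to $r(C^r_n)=A^r_n$, whence $A\cap C^r_n=A^r_n$, which is precisely the injectivity claim.

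The step requiring the most care is the construction in the second paragraph: one must verify that a $\Q$-basis of the complement $W$ really can be realized by images of integral homology classes (this is exactly where the spanning property of $H_1(B;\Z)$ in $H_1(B;\Q)$ is used), and that $H_2(C;\Q)$ still vanishes when $E$ has infinite rank. The identification of $H_1(C;\Q)$ and the retraction argument are then routine. (Note that if $h_*$ happens already to be surjective on $H_1(-;\Q)$, no free group is needed and Stallings' theorem applies to $h$ directly.)
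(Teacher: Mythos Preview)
Your argument is correct and follows essentially the same strategy as the paper: enlarge $A$ by a free factor to turn $h$ into a rational $H_1$-isomorphism, apply Stallings' theorem, and then observe that $A/A^r_n$ injects into the quotient for the free product. Your treatment is in fact a bit more careful than the paper's in two places---you explicitly justify that the $\Q$-basis of the complement $W$ can be realized by integral classes, and your retraction argument for $A\cap C^r_n=A^r_n$ is cleaner than the paper's invocation of an isomorphism $A/A^r_n * C/C^r_n \cong (A*C)/(A*C)^r_n$.
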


\begin{proof}
Consider the injection $h_*:H_1(A;\mathbb{Q}) \rightarrow H_1(B, \mathbb{Q})$.  As $H_1(B, \mathbb{Q})$ is a $\mathbb{Q}$ vector space, it decomposes as $H_1(B; \mathbb{Q}) \cong H_1(A; \mathbb{Q}) \oplus V$ where $V$ is a $\Q$ vector space. Let $C$ be a free group of the same rank as $V$ with generating set $\{c_i\}$ and note that $H_1(C; \mathbb{Q}) \cong V$.   Let $\{e_i\}$ be a basis for $V$ and choose elements $b_i \in B$ such that $b_i \mapsto e_i$ through the isomorphism $H_1(B; \mathbb{Q}) \cong H_1(A; \mathbb{Q}) \oplus V$.  There is a unique group homomorphism $g:C \rightarrow B$ such that $c_i \mapsto b_i$.  Consider the map $h* g: A * C \rightarrow B$.  By construction, this is a group homomorphism which induces an isomorphism $(h * g)_* : H_1(A * C; \mathbb{Q})\rightarrow  H_1(B; \mathbb{Q})$.  As $H_2(A*C)=H_2(B)=0$, clearly the induced map $H_2(A*C;\Q) \rightarrow H_2(B;\Q)$ is surjective.  Hence by Stallings result, for all $n$, $A * C/(A * C)^r_n \stackrel{(h*g)_*}{\cong} B/B^r_n$.  As 
$$
A/A^r_n \hookrightarrow A/A^r_n * C/C^r_n \cong A * C/(A * C)^r_n \stackrel{(h*g)_*}{\cong} B/B^r_n,
$$
the map $A/A^r_n \rightarrow B/B^r_n$ induced by $h$ is injective.
\end{proof}

\begin{remark}
Note that for a free group $E$, since $E$ is torsion free, the rational lower central series agrees with the standard lower central series, i.e. $E^r_n=E_n$.  Hence for free groups $A$ and $B$ satisfying the conditions of \prop{stallings} we achieve an injection $A/A_n  \hookrightarrow B/B_n$ on the standard lower central series quotients.  We will make extensive use of this fact throughout the paper.
\end{remark}

For ease of notation, let us rename $C=\pi_1(\Sigma, p_0)$ and $\overline{C}=\pi_1(\overline{\Sigma}, i(p_0))$.

\begin{lemma}\label{L:sigmastallings}
The embedding $i: \Sigma \rightarrow \overline{\Sigma}$ induces a group monomorphism $$\overline{i_*}:\frac{C_k}{C_{k+1}}\rightarrow \frac{\overline{C}_k}{\overline{C}_{k+1}}.$$
\end{lemma}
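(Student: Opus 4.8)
The plan is to deduce this directly from \prop{stallings} and the remark following it, applied to the homomorphism $h=i_*\colon C\to\overline{C}$. Two inputs are required: that $H_2(C;\Q)=H_2(\overline{C};\Q)=0$, and that $i_*$ induces an injection on rational first homology. The first is immediate, since $\Sigma$ and $\overline{\Sigma}$ are surfaces with nonempty boundary, so $C=\pi_1(\Sigma)$ and $\overline{C}=\pi_1(\overline{\Sigma})$ are free groups; hence $H_2(-;\Q)=0$, and by the remark after \prop{stallings} their rational lower central series coincide with their ordinary lower central series.

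For the second input I would exhibit a retraction $r\colon\overline{\Sigma}\to i(\Sigma)$. Recall that $\overline{\Sigma}\subset\partial(\Sigma\times I)$ is assembled from the two copies $\Sigma\times\{0\}$ and $\Sigma\times\{1\}$ joined along the annuli $\partial\Sigma\times I$ (with a disk removed near $p_0$). The folding map $(x,t)\mapsto(x,0)$ collapses $\Sigma\times\{1\}$ together with the connecting annuli onto $i(\Sigma)=\Sigma\times\{0\}$ and restricts to the identity on $i(\Sigma)$, so it is a retraction $r$ of $\overline{\Sigma}$ onto $i(\Sigma)$ (see \figr{sigmabar}). Consequently the composite $H_1(\Sigma;\Q)\xrightarrow{i_*}H_1(\overline{\Sigma};\Q)\xrightarrow{r_*}H_1(i(\Sigma);\Q)=H_1(\Sigma;\Q)$ is the identity, so $i_*$ is split injective on rational first homology.

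With both hypotheses in hand, \prop{stallings} yields for every $n$ an injection $C/C^r_n\hookrightarrow\overline{C}/\overline{C}^r_n$, that is, by freeness, an injection $C/C_n\hookrightarrow\overline{C}/\overline{C}_n$; taking $n=k+1$ shows $i_*$ is injective on $C/C_{k+1}$. Since every homomorphism carries the lower central series into the lower central series, $i_*(C_k)\subseteq\overline{C}_k$, so $i_*$ restricts to a map $\overline{i_*}\colon C_k/C_{k+1}\to\overline{C}_k/\overline{C}_{k+1}$. As $C_k/C_{k+1}$ is a subgroup of $C/C_{k+1}$ and the restriction of an injective homomorphism to a subgroup is injective, $\overline{i_*}$ is a monomorphism, as claimed.

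I expect the only genuine content, and the step to get right, is the construction of the retraction $r$: one must check that collapsing the ``top'' copy of $\Sigma$ and the connecting annuli is still well defined after the disk near $p_0$ has been removed. Once rational $H_1$-injectivity is secured, everything else is formal; the one remaining point worth stating explicitly is that \prop{stallings} gives injectivity of the nilpotent-quotient map $C/C_{k+1}\to\overline{C}/\overline{C}_{k+1}$, from which the assertion about the graded piece $C_k/C_{k+1}$ follows only after the elementary restriction observation above.
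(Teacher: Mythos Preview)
Your proof is correct and follows essentially the same route as the paper's: verify the hypotheses of \prop{stallings} (vanishing $H_2(-;\Q)$ and injectivity on $H_1(-;\Q)$), then invoke the proposition together with the remark that for free groups the rational and ordinary lower central series agree. The paper simply asserts that $i_*$ is injective on $H_1(-;\Q)$, whereas you supply the retraction $r\colon\overline{\Sigma}\to i(\Sigma)$ via $(x,t)\mapsto(x,0)$ to justify it; you also make explicit the passage from injectivity on $C/C_{k+1}$ to injectivity on the graded piece $C_k/C_{k+1}$, which the paper leaves implicit.
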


\begin{proof}
This is a direct application of \prop{stallings}.  Note that as $\Sigma$ and $\overline{\Sigma}$ are surfaces with boundary, they each deformation retract to a wedge of circles.  Thus $\pi_n(\Sigma)=\pi_n(\overline{\Sigma})=1$ for $n>1$.  Thus $\Sigma$ is a $K(C,1)$ and $\overline{\Sigma}$ is a $K(\overline{C},1)$.  Hence $H_2(C,\Q)=H_2(\Sigma,\Q)=0$ and $H_2(\overline{C},\Q)=H_2(\overline{\Sigma},\Q)=0$.  The embedding $i$ induces a homomorphism $C \rightarrow \overline{C}$ and a monomorphism $i_*:H_1(C;\Q) \rightarrow H_1(\overline{C};\Q)$.    Thus $C$ and $\overline{C}$ satisfy the conditions of \prop{stallings} and hence we achieve an injective homomorphism $\overline{i_*}:\frac{C_k}{C_{k+1}}\rightarrow \frac{\overline{C}_k}{\overline{C}_{k+1}}.$
\end{proof}

We now work to relate the homology of $\Sigma$ and $\overline{\Sigma}$.  Let $\Theta=\overline{\Sigma} \setminus \text{int}\left( i(\Sigma) \right)$ and let $j:\Theta \rightarrow \overline{\Sigma}$ be the natural inclusion map.  The inclusion $j$ yields the following long exact sequence of a pair:
$$
\cdots \rightarrow H_1(\Theta) \stackrel{j_*}{\rightarrow}H_1(\overline{\Sigma}) \stackrel{\pi}{\rightarrow} H_1(\overline{\Sigma}, \Theta) \rightarrow \stackrel{\sim}{H_0}(\Theta) \rightarrow \stackrel{\sim}{H_0}(\overline{\Sigma}).
$$
Note in particular that this exact sequence provides us with an isomorphism 
$$
\pi: \frac{H_1(\overline{\Sigma})}{j_*(H_1(\Theta))} \stackrel{\cong}{\rightarrow} H_1(\overline{\Sigma}, \Theta).
$$
By excision, the inclusion $i: \Sigma \rightarrow \overline{\Sigma}$ induces an isomorphism on homology: 
$$
i_*:H_1(\Sigma, \partial \Sigma) \stackrel{\cong}{\rightarrow} H_1(\overline{\Sigma}, \Theta).
$$
Hence there is an isomorphism 
$$
\pi^{-1}i_*:H_1(\Sigma, \partial \Sigma) \stackrel{\cong}{\rightarrow} \frac{H_1(\overline{\Sigma})}{j_*(H_1(\Theta))}.
$$

Let $[f] \in J_k(\Sigma)$ and let $f$ be a representative homeomorphism of $[f]$.  Let  $\overline{f}: \overline{\Sigma} \rightarrow \overline{\Sigma}$ be given by 
$$
\overline{f}(x)= \begin{cases} f(x) \qquad & \text{if }x \in \Sigma\\ x & \text{if } x \notin \Sigma \end{cases}.
$$
Let $i':Mod(\Sigma) \rightarrow Mod(\overline{\Sigma})$ be the map given by $i'([f])=\left[\overline{f}\right]$.  Note that this map is well defined since isotopic maps on $\Sigma$ extend to isotopic maps on $\overline{\Sigma}$.  It is naturally a homomorphism.

Let $\eta_k$ be the map
$$
\eta_k: Hom\left( \frac{H_1(\overline{\Sigma})}{j_*(H_1(\Theta))}, \frac{\overline{C}_k}{\overline{C}_{k+1}}\right) \rightarrow  Hom\left( {H_1({\Sigma}, \partial \Sigma)}, \frac{\overline{C}_k}{\overline{C}_{k+1}}\right)
$$
which is the dual of the isomorphism $\pi^{-1}i_*$.

\begin{lemma}\label{L:excision}
Given a mapping class $[f] \in Mod(\Sigma)$, $\tau_k\left(i'([f])\right) \in  Hom\left( \frac{H_1(\overline{\Sigma})}{j_*(H_1(\Theta))}, \frac{\overline{C}_k}{\overline{C}_{k+1}}\right)$.  Furthermore, for $[\alpha] \in H_1(\Sigma, \partial \Sigma)$, 
$$
\eta_k \tau_k i'\left([f]\right)[\alpha] \in \overline{i}\left( \frac{C_k}{C_{k+1}}\right).
$$

Equivalently, we have the following sequence of maps
\begin{align*}
J_k(\Sigma) \stackrel{i'}{\rightarrow}  J_k(\overline{\Sigma}) \stackrel{\tau_k}{\rightarrow} Hom\left( \frac{H_1(\overline{\Sigma})}{j_*(H_1(\Theta))}, \frac{\overline{C}_k}{\overline{C}_{k+1}}\right)& \stackrel{\eta_k}{\rightarrow}  Hom\left( {H_1({\Sigma}, \partial \Sigma)}, \frac{\overline{C}_k}{\overline{C}_{k+1}}\right) \\
& \qquad \qquad \stackrel{\overline{i}^{-1}}{\rightarrow}Hom\left( {H_1({\Sigma}, \partial \Sigma)}, \frac{C_k}{C_{k+1}}\right).
\end{align*}
\end{lemma}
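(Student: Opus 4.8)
\emph{(Proof proposal.)} The plan is to verify the three things the statement packages together: (a) that $i'$ maps $J_k(\Sigma)$ into $J_k(\overline\Sigma)$, so that $\tau_k(i'[f])$ is defined; (b) that this Johnson homomorphism kills $j_*(H_1(\Theta))$, hence descends to $H_1(\overline\Sigma)/j_*(H_1(\Theta))$, which is the first assertion; and (c) that, pulled back along $\eta_k$, its values lie in $\overline{i}\!\left(C_k/C_{k+1}\right)$, which is the second assertion and also makes the final arrow $\overline{i}^{-1}$ of the displayed sequence meaningful. Everything will rest on two elementary facts: $\overline f$ restricts to the identity on $\Theta$ (because $f$ fixes $\partial\Sigma$ pointwise and $\Theta$ meets $i(\Sigma)$ only along the fold $i(\partial\Sigma)$), and $i_*$ carries $\pi_1(\Sigma)_k$ into $\overline C_k = \pi_1(\overline\Sigma)_k$.

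For (a), I would first upgrade Definition~\ref{D:multipleboundary}: for $[f]\in J_k(\Sigma)$ one has $f(a)\,\overline a\in\pi_1(\Sigma)_k$ for \emph{every} path $a$ in $\Sigma$ with endpoints on $\partial\Sigma$, which follows from conditions (1) and (2) together with the independence of $J_k(\Sigma)$ from the choice of connecting arcs noted after the definition. Then, given $\gamma\in\pi_1(\overline\Sigma,i(p_0))$, I would homotope it to an alternating concatenation $a_1 b_1\cdots a_r b_r$ with each $a_j$ a path in $i(\Sigma)$, each $b_j$ a path in $\Theta$, and all splitting points on $i(\partial\Sigma)$; since $\overline f(b_j)=b_j$, the telescoping computation used in the proof of Lemma~\ref{L:commutator} rewrites $\overline f_*(\gamma)\gamma^{-1}$ as a product of $\pi_1(\overline\Sigma)$-conjugates of the elements $i_*\!\left(f(i^{-1}(a_j))\,\overline{i^{-1}(a_j)}\right)$, each in $\overline C_k$ by the upgraded arc condition and the second fact above. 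Normality of $\overline C_k$ then gives $\overline f_*(\gamma)\gamma^{-1}\in\overline C_k$, so $i'([f])\in J_k(\overline\Sigma)$.

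Part (b) is short: $j_*(H_1(\Theta))$ is generated by classes of loops $\lambda$ contained in $\Theta$, and $\overline f(\lambda)=\lambda$ for each such $\lambda$, so $\tau_k(i'[f])([\lambda])=[\overline f(\lambda)\lambda^{-1}]=0$; being a homomorphism, $\tau_k(i'[f])$ then vanishes on all of $j_*(H_1(\Theta))$ and factors through the quotient. For (c), the key geometric step is to exhibit a good representative of $\pi^{-1}i_*([\alpha])$: represent $[\alpha]\in H_1(\Sigma,\partial\Sigma)$ by a properly embedded arc $\alpha$, close it up by an arc running in $\partial\Sigma$ (plus a path to the basepoint), and let $c_\alpha\in\pi_1(\Sigma,p_0)$ be the resulting loop class. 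Because the closing arc lies in $i(\partial\Sigma)\subset\Theta$, the image of $i_*[c_\alpha]$ in $H_1(\overline\Sigma,\Theta)$ is exactly $i_*[\alpha]$, so $i_*[c_\alpha]$ represents $\pi^{-1}i_*([\alpha])$ in $H_1(\overline\Sigma)/j_*(H_1(\Theta))$. As $\eta_k$ is the dual of $\pi^{-1}i_*$, it follows that $\eta_k\tau_k i'([f])[\alpha]=\tau_k(i'[f])\!\left(i_*[c_\alpha]\right)=\left[\overline f_*(i_*c_\alpha)(i_*c_\alpha)^{-1}\right]$; and since $i_*c_\alpha$ is represented by a loop lying inside $i(\Sigma)$, where $\overline f$ agrees with $i\circ f\circ i^{-1}$, this equals $i_*\!\left(f_*(c_\alpha)c_\alpha^{-1}\right)$ modulo $\overline C_{k+1}$, with $f_*(c_\alpha)c_\alpha^{-1}\in C_k$ since $[f]\in J_k(\Sigma)$. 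Hence the value lies in $\overline{i}\!\left(C_k/C_{k+1}\right)$, and $\overline{i}^{-1}$ is legitimate because $\overline{i}$ is injective by Lemma~\ref{L:sigmastallings}.

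The step I expect to be the genuine obstacle is (a): extracting the ``every arc'' strengthening of Definition~\ref{D:multipleboundary} and running the telescoping bookkeeping inside $\pi_1(\overline\Sigma)$ cleanly will take some care, and one must also check honestly that closing an arc up along $\partial\Sigma$ computes $\pi^{-1}i_*$ rather than some other lift into $H_1(\overline\Sigma)/j_*(H_1(\Theta))$. Once those geometric identifications are in place, parts (b) and (c) are essentially formal.
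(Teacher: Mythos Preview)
Your parts (a) and (b) are fine, and (b) is exactly what the paper does. Part (a) is actually something the paper glosses over, so your explicit treatment there is a genuine addition; the telescoping argument you sketch is the right idea and goes through once you state the arc condition carefully (the loop $f(a)\overline a$ is only based at $p_0$ when $a$ starts at $p_0$, so the ``every arc'' upgrade must be phrased with connecting paths to the basepoint).

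The real problem is in (c). You want to close a relative cycle $\alpha$ by an arc in $\partial\Sigma$ to get a loop $c_\alpha$ in $\Sigma$, but $\partial\Sigma$ is disconnected: if $\alpha$ runs between two \emph{different} boundary components of $\Sigma$, there is no arc in $\partial\Sigma$ joining its endpoints, and your $c_\alpha$ simply does not exist. These are not avoidable classes --- in the long exact sequence the connecting map $H_1(\Sigma,\partial\Sigma)\to H_0(\partial\Sigma)$ is nonzero whenever $\Sigma$ has more than one boundary component, and its nonzero values are detected precisely by such arcs (e.g.\ the $A_i$ in Definition~\ref{D:multipleboundary}). So the argument breaks exactly on the generators that distinguish the multi-boundary case from the single-boundary one.

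The paper's fix is to close the arc on the \emph{other side}: push $A_i$ into $\overline\Sigma$ and complete it to a loop $c_i$ by a path $B_i$ lying in $\Theta$ (running through $\partial\Sigma\times I$ and back along the copy $\Sigma\times\{1\}$). Then $i_*[c_i]$ still represents $\pi^{-1}i_*[A_i]$ because $B_i\subset\Theta$, but now instead of invoking ``$c_\alpha$ lies in $i(\Sigma)$'' you use that $\overline f$ is the identity on $B_i$, so $\overline f(c_i)\,\overline{c_i}=i(f(A_i))\,B_i\,\overline{B_i}\,\overline{i(A_i)}=i_*\bigl(f(A_i)\overline{A_i}\bigr)$, which lies in $\overline{i}(C_k/C_{k+1})$ by condition~(2) of Definition~\ref{D:multipleboundary}. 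Your argument for the loop-type generators (the $a_i$) is correct and matches the paper; only the arc-type generators need this repair.
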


\begin{proof}
To prove the first statement in the lemma we examine an element $j_*[\beta] \in H_1(\overline{\Sigma})$.  The element $[\beta] \in H_1(\Theta)$ has a representative element $\beta \in \pi_1(\Theta)$.  As the following diagram commutes
$$
\xymatrix
{\pi_1(\Theta) \ar[r]^{j_*} \ar[d]&\pi_1(\Sigma)\ar[d]\\
H_1(\Theta) \ar[r]^{j_*} & H_1(\Sigma)}
$$
we have that $j_*[\beta]\in H_1(\overline{\Sigma})$ has a representative loop $\beta$ which lies entirely in $\Theta$.  Thus by definition of $i'$, for any $[f]\in J_k(\Sigma)$, $\tau_k\left(i'\left([f]\right) \right)[\beta]=\overline{f}(\beta)\beta^{-1}=\beta \beta^{-1}=1$.  Hence $\tau_k\left(i'([f])\right) \in  Hom\left( \frac{H_1(\overline{\Sigma})}{j_*(H_1(\Theta))}, \frac{\overline{C}_k}{\overline{C}_{k+1}}\right)$.

To prove that $\eta_k \tau_k i'\left([f]\right)[\alpha] \in \overline{i}\left( \frac{C_k}{C_{k+1}}\right)$ let us consider the following basis for $H_1(\Sigma)$, shown in \figr{sigmabasis}.

\begin{figure}[h] %  figure placement: here, top, bottom, or page
   \centering
   \includegraphics[width=4.25in]{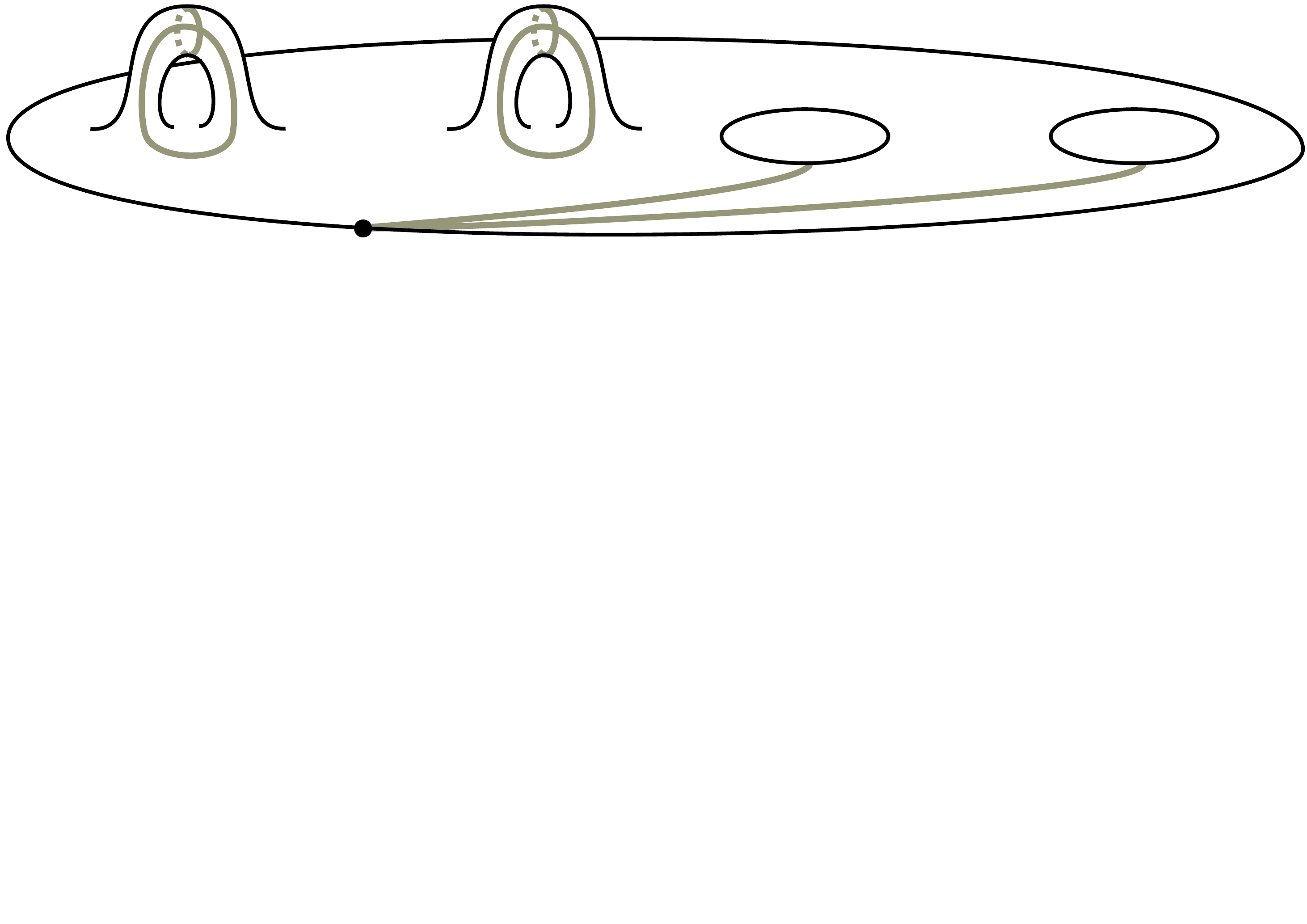}
   \put(-269,173){$a_1$}
   \put(-269,220){$a_2$} 
\put(-189,220){$a_{2g-1}$} 
\put(-189,173){$a_{2g}$} 
   \put(-227,185){$\dots$}
   \put(-87, 185){$\dots$}
\put(-152,175){$A_1$}
\put(-77,175){$A_n$}
   \vspace{-5.5cm}
\caption{A basis for the relative homology $H_1({\Sigma}, \partial \Sigma)$.}
   \label{F:sigmabasis}
\end{figure}

Through the map $\pi^{-1}i_*$ these basis elements map to loops in $H_1(\overline{\Sigma})$ as shown in \figr{doubledsurfacebasis}.

\begin{figure}[h] %  figure placement: here, top, bottom, or page
   \centering
   \includegraphics[width=5.0in]{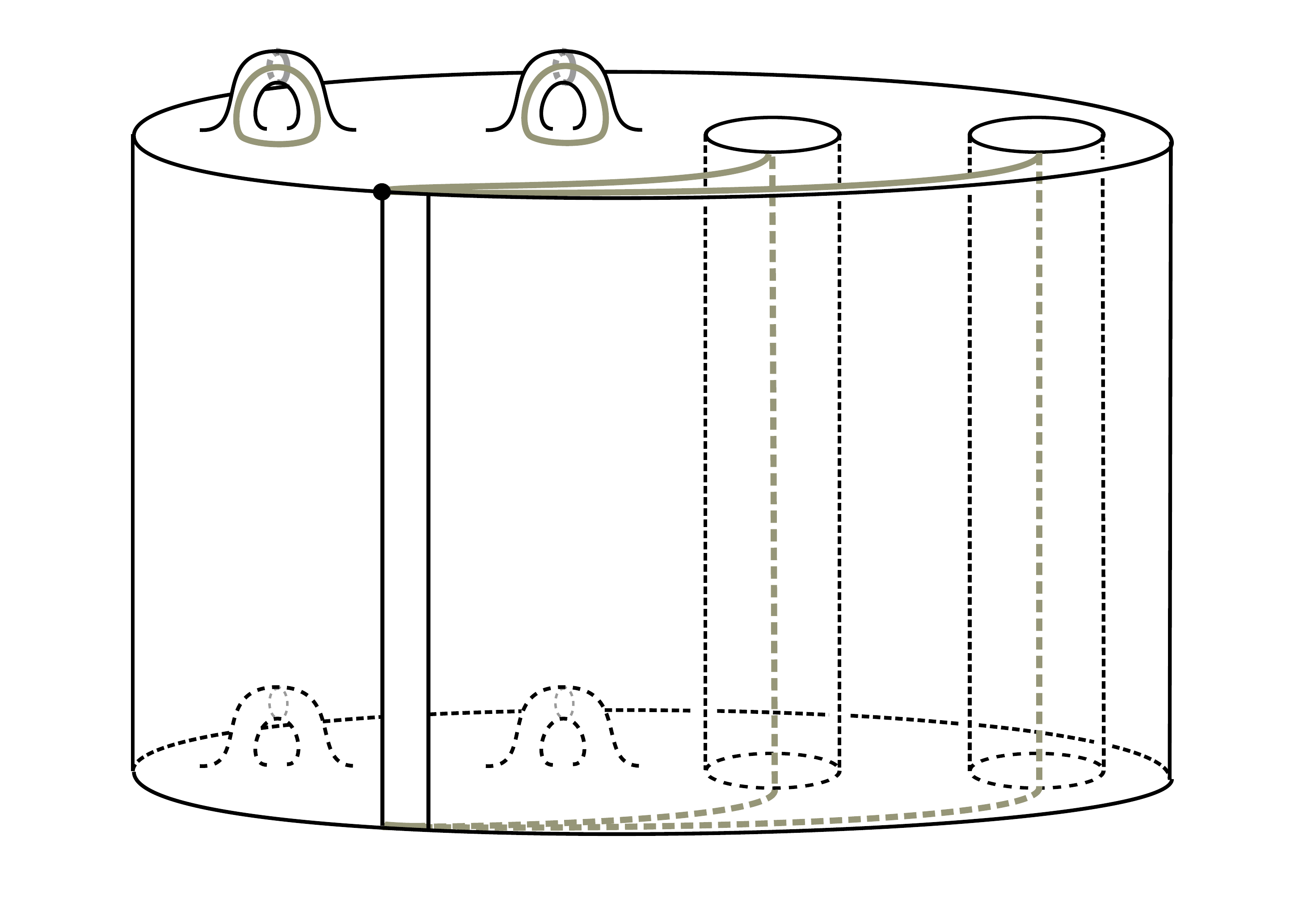}
   \put(-290,208){$a_1$}
   \put(-290,246){$a_2$} 
\put(-218,208){$a_{2g-1}$} 
\put(-213,246){$a_{2g}$} 
\put(-182,210){$A_1$}
\put(-108,211){$A_n$}
   \put(-249,218){$\dots$}
   \put(-120,218){$\dots$}
   \caption{The elements of $H_1(\overline{\Sigma})$ corresponding to the basis of  $H_1({\Sigma}, \partial \Sigma)$ chosen in \protect\figr{sigmabasis}.}
   \label{F:doubledsurfacebasis}
\end{figure}
Note that the loops $a_i$ include to the same homology elements of $H_1(\Sigma)$.  Let the arcs $A_i$ be parametrized by $t \in [0,1]$.  Then under this map the arcs $A_i$ are sent to loops $c_i$ given by:
$$
c_i(t)=\begin{cases} 
(A_i(4t), 0)  \qquad &0 \le t \le 1/4\\
(p_i,4t-1) & 1/4 < t < 1/2\\
(A_i(3-4t), 1) & 1/2 \le t < 3/4\\
(p_0,4-4t)& 3/4 \le t \le 1\\
\end{cases}
$$
as illustrated in \figr{doubledsurfacebasis}.  First, note that for any homology class $[\alpha] \in H_1(\overline{\Sigma})$ which has a representative loop $\alpha \in i_* \pi_1(\Sigma)$ and for any $[f]\in J_k(\Sigma)$ we have
\begin{align*}
\tau_k(i'([f])[\alpha]&=[\overline{f}(\alpha) \alpha^{-1}]\\
&=[i_*f(\alpha) \alpha^{-1}]\\
&=\overline{i_*}[f(\alpha)\alpha^{-1}]
\end{align*}
Hence the image by $\tau_k(i'([f])$ of $a_i$ yields an element of $\overline{i_*}\left( \frac{C_k}{C_{k+1}}\right)$. 

Let $B_i$ be the segment of $c_i$ parametrized by $1/4\le t \le 1$ so that $c_i=A_i\cup B_i$.  Note that by definition  $i'({f})$ acts by the identity on $B_i$ and acts by $f$ on $A_i$.  By construction the loops $c_i$ are based at $p_0$.  Thus they also represent elements of $\pi_1(\overline{\Sigma})$. We will abuse notation by referring to the parametrized loop, the homotopy class, and the homology class of $c_i$ as $c_i$.   Then we may compute $\tau_k(i'([f])c_i$ as follows.
\begin{align*}
\tau_k(i'([f])c_i&=[\overline{f}(c_i) c_i^{-1}]\\
&=[\overline{f}(A_i  B_i) \overline{(A_iB_i)}]\\
&=[i(f(A_i))B_i \overline{(A_iB_i)}]\\
\qquad&=[i(f(A_i))B_i \overline{B_i}\,\, \overline{A_i}]\\
&=[i(f(A_i))\overline{A_i}]\\
\qquad&=[i_*(f(A_i)\overline{A_i})]\\
&=\overline{i_*}[(f(A_i)\overline{A_i})]\\
\end{align*}
This shows that for each $i$, $\tau_k(i'([f])c_i \in \overline{i_*}\left( \frac{C_k}{C_{k+1}}\right)$.  As \linebreak $\tau_k(i'([f])[\alpha] \in \overline{i_*}\left( \frac{C_k}{C_{k+1}}\right)$ for all elements $[\alpha]$ of a basis for $H_1(\Sigma, \bd \Sigma)$, then for all $[\alpha] \in H_1(\Sigma, \bd \Sigma)$ we have $\tau_k(i'([f])[\alpha] \in \overline{i_*}\left( \frac{C_k}{C_{k+1}}\right)$.

This shows we have the following composition of homomorphisms:
\begin{align*}
J_k(\Sigma) \stackrel{i'}{\rightarrow}  
J_k(\overline{\Sigma}) \stackrel{\tau_k}{\rightarrow} 
&Hom\left( \frac{H_1(\overline{\Sigma})}{j_*(H_1(\Theta))}, \frac{\overline{C}_k}{\overline{C}_{k+1}}\right)\\ &\qquad \qquad \qquad \qquad \quad \stackrel{\eta_k}{\rightarrow} 
 Hom\left( {H_1({\Sigma}, \partial \Sigma)}, \overline{i_*}\left(\frac{{C}_k}{{C}_{k+1}}\right)\right).
\end{align*}
By applying $\overline{i_*}^{-1}$ on the range of $Hom\left( {H_1({\Sigma}, \partial \Sigma)}, \overline{i_*}\left(\frac{{C}_k}{{C}_{k+1}}\right)\right)$ we get the following composition:
\begin{align*}
J_k(\Sigma) \stackrel{i'}{\rightarrow}  J_k(\overline{\Sigma}) \stackrel{\tau_k}{\rightarrow} Hom\left( \frac{H_1(\overline{\Sigma})}{j_*(H_1(\Theta))}, \frac{\overline{C}_k}{\overline{C}_{k+1}}\right)& \stackrel{\eta_k}{\rightarrow}  Hom\left( {H_1({\Sigma}, \partial \Sigma)}, \frac{\overline{C}_k}{\overline{C}_{k+1}}\right) \\
&  \stackrel{\overline{i_*}^{-1}}{\rightarrow}Hom\left( {H_1({\Sigma}, \partial \Sigma)}, \frac{C_k}{C_{k+1}}\right)
\end{align*}
as desired.
\end{proof}

\begin{definition}
We define the generalized Johnson homomorphisms for surfaces with multiple boundary components 
$$
\tau_k: J_k(\Sigma) \rightarrow Hom\left( H_1({\Sigma}, \partial \Sigma), \pi_1({\Sigma})_k/ \pi_1({\Sigma})_{k+1} \right)
$$
 to be the composition $\overline{i}^{-1} \eta_k \tau_k i'$ given in \lem{excision}.
\end{definition}

Thus to compute the Johnson homomorphism for surfaces with multiple boundary components, we must consider how the mapping class acts on all representatives of a basis for $Hom\left( H_1({\Sigma}, \partial \Sigma) \right)$.  In particular, this includes the action on arcs joining boundary components of $\Sigma$.  It suffices to consider the action of mapping classes on arcs $A_i$ (as described in \defn{multipleboundary}).   As shown in the proof of \lem{excision}, for these arcs we obtain the Johnson homomorphism $\tau_k([f])=\left([A_i]\mapsto [f(A_i) \overline{A_i}]\right)$.   Note that \defn{multipleboundary} verifies that $f(A_i) \overline{A_i}$ is in fact an element of $\pi_1(\Sigma)_k$ as desired.

%%%%%%%%%%%%%%%%%%%%%%%%%%%%%%%%%%%%%%%%%%%%%%%%
\section{Higher-Order Johnson Subgroups and Homomorphisms}
%%%%%%%%%%%%%%%%%%%%%%%%%%%%%%%%%%%%%%%%%%%%%%%%

\subsection{Higher-Order Johnson Subgroups and Homomorphisms}\label{S:higherjohnson}
The Johnson subgroups and homomorphisms are heavily built upon the lower central series.  In this section we generalize the concepts of Johnson subgroups and homomorphisms to more general characteristic subgroups.  These tools are useful in analyzing subgroups of the mapping class group which induce trivial automorphisms on $F/H$ for any characteristic subgroup~$H$.

Recall that $S$ is an oriented surface with one boundary component and let $*$ be a basepoint for $\pi_1(S)$ which lies on the boundary.  We are then able to define the higher-order Johnson subgroups as follows.

\begin{definition}
Let $F=\pi_1(S, *)$ and let $H$ be a characteristic subgroup of $F$.  Let $\phi^H: Mod(S)\rightarrow \aut(F/H)$ be the map which takes a homeomorphism class in $Mod(S)$ to the induced automorphism of $F/H$.  We define the higher-order Johnson subgroup with characteristic subgroup $H$, $J^{H}(S)$, by $J^{H}(S)=\ker \phi^H$.  Equivalently, $J^{H}(S)$ is the subgroup of the mapping class group which acts trivially on $F/H$.

For any characteristic subgroup $H$ of $F$, $H_k$ is also a characteristic subgroup of $F$.  The higher-order Johnson subgroup with characteristic subgroup $H_k$ is denoted $J^H_k(S)$.  As any homeomorphism acting trivially on $F/H_k$ also acts trivially on $F/H_n$ for $n<k$, $J^H_k(S) \subset J^H_n(S)$ for $n<k$.  Hence the subgroups $J^H_k(S)$ form a filtration of $J^H(S)$: the higher-order Johnson filtration with characteristic subgroup $H$.
$$
J^H(S)=J^H_1(S) \supset J^H_2(S) \supset J^H_3(S) \supset \cdots \supset J^H_k(S) \supset \cdots,
$$   
\end{definition}

Note that the traditional Johnson filtration is recovered by choosing $H=F$.

There is a natural structure on $H_k/H_{k+1}$ as a left $\Z[F/H]$ module for all $k \ge 1$.  Here the module action by elements  $[g] \in F/H$ is given by $[g]\cdot[x]=[gxg^{-1}]$.  It is clear that this action is well defined since given $g \in H$ and $x \in H_k$, the commutator $gxg^{-1}x^{-1}$ belongs to $H_{k+1}$.  Hence for $g \in H$,  $[gxg^{-1}]=[x]$ as elements of $H_k/H_{k+1}$.  The action by elements of $\Z[F/H]$ is given by the obvious extension.  It is important to note that often $F/H$ is a nonabelian group, and hence $H_k/H_{k+1}$ is a module over a noncommutative ring.

Having constructed subgroups analogous to the Johnson subgroups, it is natural to develop a corresponding analog to the Johnson homomorphisms.

\begin{definition}
The higher-order Johnson homomorphisms, 
$$
\tau^H_k(f): J^H_k(S) \rightarrow Hom_{\mathbb{Z}\left[F/H\right]}(H/H' \rightarrow H_k/H_{k+1}),
$$
are given by $\tau^H_k(f)=\left([x] \mapsto [f_*(x)x^{-1}]\right)$ where $f_*:F \rightarrow F$ is the automorphism induced by $f$.   
\end{definition}

\begin{theorem}\label{T:welldefined}
The higher-order Johnson homomorphisms, 
$$\tau^H_k: J^H_k(S) \rightarrow Hom_{\Z [F/H]}(H/H', H_k/H_{k+1}),$$
are well defined, group homomorphisms for $k \ge 2$.
\end{theorem}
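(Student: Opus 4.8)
The plan is to verify two things: first, that for $f \in J^H_k(S)$ the assignment $[x] \mapsto [f_*(x)x^{-1}]$ is a well-defined $\Z[F/H]$-module homomorphism $H/H' \to H_k/H_{k+1}$, and second, that $f \mapsto \tau^H_k(f)$ is itself a group homomorphism on $J^H_k(S)$. I would set up notation by writing $\bar f = f_*$ for the induced automorphism of $F$, and recording the basic facts that for $f \in J^H_k(S)$ we have $\bar f(x)x^{-1} \in H_k$ for all $x \in F$ (this is the definition of acting trivially on $F/H_k$, unwound exactly as in the Remark following the definition of the classical $\tau_k$), and in particular $\bar f$ restricts to an automorphism of $H$ since $H$ is characteristic. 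The domain should really be $H/H'$: since $f$ acts trivially on $F/H$, for $x \in H$ the element $\bar f(x)x^{-1}$ lies in $H \cap H_k$, and we want its class in $H_k/H_{k+1}$; I will need $H_{k+1} \supseteq$ the relevant "error terms," which is where $k \ge 2$ enters.

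First I would check additivity on the nose: for $x, y \in H$, compute $\bar f(xy)(xy)^{-1} = \bar f(x)\,\bigl(\bar f(y) y^{-1}\bigr)\, x^{-1}$, and observe that conjugating $\bar f(y)y^{-1} \in H_k$ by $\bar f(x)$ changes it by an element of $[H_k, F]$ — but here I must be careful, since $\bar f(x)$ need not lie in $H$; still, $\bar f(x)x^{-1} \in H_k \subseteq H$, so $\bar f(x) \in xH_k \subseteq H$, and conjugation by $\bar f(x)$ agrees with conjugation by $x$ modulo $[H_k,F] \subseteq H_{k+1}$. So modulo $H_{k+1}$ we get $\bar f(xy)(xy)^{-1} \equiv \bigl(\bar f(x)x^{-1}\bigr)\cdot\bigl(x\,(\bar f(y)y^{-1})\,x^{-1}\bigr)$, which is exactly additivity once we pass to the abelian group $H_k/H_{k+1}$ and recognize the second factor as the module action of $[x] \in F/H$ on $[\bar f(y)y^{-1}]$. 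The same computation, read with $y \in H'$, shows the map kills $H'$: for $y \in H'$ we have $\bar f(y)y^{-1} \in [H_k, ?]$-type terms — more precisely $\bar f$ acts trivially on $H/H_{k+1}$-relevant quotients only after using that $y$ is itself a product of commutators in $H$, and each commutator $[a,b]$ with $a,b \in H$ satisfies $\bar f([a,b])[a,b]^{-1} \in H_{k+1}$ because $\bar f(a) \equiv a$, $\bar f(b)\equiv b$ modulo $H_k$ and the commutator of such congruences lands in $H_{k+1}$; this is precisely the step that requires $k \ge 2$ (for $k=1$ one only gets $H_2$, not the needed refinement). The $\Z[F/H]$-linearity then follows from additivity plus checking the single relation $[\bar f(gxg^{-1})(gxg^{-1})^{-1}] = [g]\cdot[\bar f(x)x^{-1}]$, which is a direct manipulation using $\bar f(g) \equiv g \bmod H$ and that conjugation by an element of $H$ acts trivially on $H_k/H_{k+1}$.

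Next I would show $\tau^H_k$ is a group homomorphism. For $f, h \in J^H_k(S)$ and $x \in H$, expand $(fh)_*(x)x^{-1} = \bar f(\bar h(x)) x^{-1} = \bar f\bigl(\bar h(x) x^{-1}\bigr)\cdot \bar f(x) x^{-1}$. Since $\bar h(x)x^{-1} \in H_k$ and $\bar f$ acts trivially on $F/H_k$, hence on $H_k/H_{k+1}$ (conjugation-type error terms again land in $H_{k+1}$), we have $\bar f(\bar h(x)x^{-1}) \equiv \bar h(x)x^{-1} \bmod H_{k+1}$; combined with $\bar f(x)x^{-1}$ this gives $[(fh)_*(x)x^{-1}] = [\bar h(x)x^{-1}] + [\bar f(x)x^{-1}]$ in the abelian group $H_k/H_{k+1}$, i.e. $\tau^H_k(fh) = \tau^H_k(f) + \tau^H_k(h)$. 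Well-definedness of $\tau^H_k$ on mapping classes (isotopy-invariance) is immediate since it only depends on $f_*$, which depends only on the isotopy class.

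The main obstacle, and the place where all the $k \ge 2$ hypotheses are really consumed, is the claim that $\bar f$ acts trivially on $H_k/H_{k+1}$ whenever $f$ acts trivially on $F/H_k$ — i.e. that the "error" in $\bar f(u) \equiv u$ for $u \in H_k$ lies in $H_{k+1}$, not merely in $H_k$. This is the analogue, for a general characteristic subgroup $H$, of the standard fact that the Johnson filtration acts trivially on the associated graded of the lower central series, and the clean way to get it is a commutator-calculus argument: write $u \in H_k$ as a product of basic commutators of weight $k$ in elements of $H$, note $\bar f(a) = a\,\epsilon_a$ with $\epsilon_a \in H_k$ for $a \in H$ (using $f \in J^H_k$ and $H$ characteristic... actually $\epsilon_a \in H_k$ only requires $f \in J^H_k$), and expand the commutator, all correction terms having weight $\ge k+1$ and lying in $H_{k+1}$ by the three-subgroup lemma or by iterated commutator identities. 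I would isolate this as a preliminary lemma before assembling the proof.
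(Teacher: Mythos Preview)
Your proposal is essentially correct and follows the same commutator-calculus approach as the paper, but two points deserve comment.

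First, a genuine slip: you write ``conjugation by $\bar f(x)$ agrees with conjugation by $x$ modulo $[H_k,F] \subseteq H_{k+1}$.'' The containment $[H_k,F] \subseteq H_{k+1}$ is false in general; by definition $H_{k+1}=[H_k,H]$, and $H$ need not be all of $F$. Fortunately your argument does not actually need this: the identity $\bar f(x)\,u\,x^{-1} = (\bar f(x)x^{-1})\cdot(xux^{-1})$ is exact, and since $x\in H$ you only need $[H_k,H]\subseteq H_{k+1}$ to kill the conjugation, which is immediate. So the additivity step survives, but the justification you wrote should be corrected.

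Second, for the group-homomorphism step you propose an auxiliary lemma: that $f\in J^H_k$ forces $\bar f$ to act trivially on $H_k/H_{k+1}$. This is true (one proves more generally that $\bar f(u)u^{-1}\in H_{k+j-1}$ for $u\in H_j$, by induction on $j$ using $[H_i,H_j]\subseteq H_{i+j}$), but the paper avoids it entirely. After writing
\[
\tau^H_k(f^1f^2)(x)=\bigl[f^1_*(f^2_*(x))\cdot (f^2_*(x))^{-1}\bigr]\cdot\bigl[f^2_*(x)x^{-1}\bigr]=\tau^H_k(f^1)\bigl(f^2_*(x)\bigr)\cdot\tau^H_k(f^2)(x),
\]
the paper simply observes that $f^2_*(x)x^{-1}\in H_k\subseteq H_2=H'$ (this is exactly where $k\ge 2$ enters), so $f^2_*(x)$ and $x$ have the same image in $H/H'$, and the already-established well-definedness of $\tau^H_k(f^1)$ on $H/H'$ gives $\tau^H_k(f^1)(f^2_*(x))=\tau^H_k(f^1)(x)$. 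This is shorter and recycles work you have already done, whereas your route proves a stronger standalone fact. Both are valid; the paper's is more economical.
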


\begin{proof}
We will start by showing that for each $f \in J^H_k(S)$ the map $\tau^H _k(f)$ is a well defined $\mathbb{Z}\left[F/H\right]$-module homomorphism.  We first show that for $[a,b]=aba^{-1}b^{-1}$, where $a,b \in H$, $\tau^H_k(f)\left([a,b]\right)=0$ in $H_k/H_{k+1}$.  By definition, 

\begin{align*}
\tau^H_k(f)\left([a,b]\right)&=f_*\left([a,b]\right)[a,b]^{-1}\\
&=[f_*(a),f_*(b)][a,b]^{-1}\\
&=[ad,be][a,b]^{-1} \qquad \qquad \text{for some }d,e \in H_k.\\
\end{align*}

Using the commutator identities $[ux,y]= \lup{u}[x,y][u,y]$ and $[x,vy]=[x,v] \lup{v}[x,y]$, where $\lup{h}g=hgh^{-1}$, we can simplify this further.

\begin{align*}
\tau^H_k(f)\left([a,b]\right)&=\lup{a}[d,be][a,be][a,b]^{-1}\\
&= \lup{a}[d,b] \lup{ab}[d,e][a,b] \lup{b}[a,e][a,b]^{-1} \qquad \quad
\end{align*}

As $d,e \in H_k$, $[d,b], [d,e],[a,e] \in H_{k+1}$.  Therefore, this expression is trivial in the quotient $H_k/H_{k+1}$.

We will next show that $\tau^H_k(f)$ is multiplicative.  By definition, for $a,b \in H$,

\begin{align*}
\tau^H_k(f)(ab)&=f_*(ab)(ab)^{-1}&\\
&=f_*(a)f_*(b)b^{-1}a^{-1}&\\
&=f_*(a)a^{-1} \lup{a} \left(f_*(b)b^{-1}\right)&\\
&=f_*(a)a^{-1}f_*(b)b^{-1} \qquad &\text{as in $H_k/H_{k+1}$, conjugation by}\\
& & \text{an element in $H$ is trivial.}\\
&=\tau^H_k(f)(a) \tau^H_k(f)(b).&
\end{align*}

Any $w \in [H,H]$ can be written as a product of commutators  \linebreak $w=c_1 \cdots c_n$.  This completes the proof that $\tau^H_k(f)$ is well defined, as  $\tau^H_k(f)(w)=\tau^H_k(f)(c_1) \cdots \tau^H_k(f)(c_n)=0$.  This also shows that $\tau^H_k(f)$ is a group homomorphism.

To show that $\tau^H_k(f)$ is a module homomorphism for a given $f$ we must show for $[g] \in F/H$ and $[x] \in H/H'$, $[g] \cdot \tau^H_k(f)([x])=\tau^H_k(f)( [g] \cdot [x])$.  As the module action is by conjugation, we may compute as follows.

\begin{align*}
\tau^H_k(f)([g] \cdot [x])&= \tau^H_k(f)\left([gxg^{-1}]\right)\\
&=\left[f_*(gxg^{-1})(gxg^{-1})^{-1}\right]\\
&=\left[f_*(g)f_*(x)f_*(g^{-1})gx^{-1}g^{-1}\right]\\
&=\left[f_*(g)g^{-1}gf_*(x)g^{-1}gf_*(g^{-1})gx^{-1}g^{-1}\right]
\end{align*}
This expression reduces to:
\begin{align*}
\tau^H_k(f)([g] \cdot [x])&=\left[ (f_*(g)g^{-1})gf_*(x)g^{-1}   (f_*(g)g^{-1})^{-1}   gx^{-1}g^{-1}\right]\\
\end{align*}

The element $g f_*(x)g^{-1} \in H$ as $H$ is a characteristic subgroup.  As \linebreak $f \in J^H_k$, $f_*$ acts trivially mod $H_k$, and thus $f_*(g)g^{-1}\in H_k$.  Since \linebreak $\tau^H_k(f)\left([x] \cdot [f]\right) \in H_k/H_{k+1}$, the conjugation of an element of $H$ by an element of $H_k$ is a trivial conjugation.  This observation yields the following expression. 

\begin{align*}
\tau^H_k(f)([x]\cdot [g])&=\left[gf_*(x)g^{-1}   gx^{-1}g^{-1}\right]\\
&=\left[gf_*(x)x^{-1}g^{-1}\right]\\
&=[g] \cdot \tau^H_k(f)(x)
\end{align*}

This concludes the proof that $\tau^H_k(f)$ is an $\Z[F/H]$-module homomorphism.  It remains to show that $\tau^H_k:J^H_k(S) \rightarrow Hom_{\Z[F/H]}(H/H', H_k/H_{k+1})$ is a group homomorphism.

Let $f^1,f^2 \in J^H_k(S)$ and let $x \in H/H'$.  We consider the image of their product by the map $\tau^H_k$ in the computation below.

\begin{align*}
\tau^H_k(f^1f^2)(x)&=(f^1f^2)_*(x)x^{-1}\\
&=f^1_*f^2_*(x)x^{-1}\\
&=f^1_*(f^2_*(x))x^{-1}\\
&=f^1_*(f^2_*(x))(f^2_*(x))^{-1} f^2_*(x)x^{-1}\\
&=\tau^H_k(f^1)(f^2_*(x))\, \tau^H_k(f^2)(x)
\end{align*} 

As $f^2 \in J^H_k(S)$, $f^2_*(x)=x$ as an element of $H/H'$ for $k\ge 2$.  Hence  $\tau^H_k(f^1)(f^2_*(x))=\tau^H_k(f^1)(x)$.  Combining this with the above computation  gives us the desired result: $\tau^H_k(f^1f^2)(x)=\tau^H_k(f^1)(x) \, \tau^H_k(f^2)(x)$.  Thus  $\tau^H_k:J^H_k(S) \rightarrow Hom_{\Z[F/H]}(H/H', H_k/H_{k+1})$ is a group homomorphism.
\end{proof}

\begin{proposition}\label{P:kernelcontained}
$J^H_{k+1} \subset \ker \tau^H_k$.  Thus 
$$
\tau^H_k: \frac{J^H_{k}}{J^H_{k+1}} \rightarrow Hom_{\Z [F/H]}(H/H', H_k/H_{k+1})
$$
is a well defined map.
\end{proposition}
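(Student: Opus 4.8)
The plan is to prove the inclusion $J^H_{k+1}(S)\subseteq\ker\tau^H_k$ by a direct computation with the defining cocycle, and then to invoke that $\tau^H_k$ is a group homomorphism (Theorem~\ref{T:welldefined}) so that it descends to the quotient via the universal property. First I would unwind the definition of $J^H_{k+1}(S)$: it is the kernel of $\phi^{H_{k+1}}\colon Mod(S)\rightarrow\aut(F/H_{k+1})$, so any $f\in J^H_{k+1}(S)$ induces an automorphism $f_*$ with $f_*(x)\equiv x\pmod{H_{k+1}}$, i.e. $f_*(x)x^{-1}\in H_{k+1}$, for every $x\in F$, and in particular for every $x\in H$.

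Next, recall that $\tau^H_k(f)$ is the assignment $[x]\mapsto[f_*(x)x^{-1}]\in H_k/H_{k+1}$ for $[x]\in H/H'$. Since $H_{k+1}\subseteq H_k$, for $f\in J^H_{k+1}(S)$ the element $f_*(x)x^{-1}$ lies in $H_{k+1}$, so its class in $H_k/H_{k+1}$ is trivial; hence $\tau^H_k(f)$ is the zero homomorphism and $f\in\ker\tau^H_k$. This establishes $J^H_{k+1}(S)\subseteq\ker\tau^H_k$. Because $J^H_{k+1}(S)\subseteq J^H_k(S)$ (noted immediately after the definition of the higher-order Johnson filtration) and $\tau^H_k\colon J^H_k(S)\rightarrow Hom_{\Z[F/H]}(H/H',H_k/H_{k+1})$ is a group homomorphism for $k\ge 2$ by Theorem~\ref{T:welldefined}, the universal property of the quotient group yields a well-defined homomorphism $\tau^H_k\colon J^H_k(S)/J^H_{k+1}(S)\rightarrow Hom_{\Z[F/H]}(H/H',H_k/H_{k+1})$, as claimed.

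There is essentially no obstacle here; the proof is a short observation once Theorem~\ref{T:welldefined} is in hand. The only points requiring care are the standing hypothesis $k\ge 2$, which is exactly what is needed to appeal to Theorem~\ref{T:welldefined}, and the fact that only the inclusion $J^H_{k+1}(S)\subseteq\ker\tau^H_k$ is asserted: unlike the classical Johnson homomorphism, where $\ker\tau_k=J_{k+1}(S)$, the reverse inclusion need not hold in this generality, since $\tau^H_k$ records only the action of $f$ on $H$ rather than on all of $F$.
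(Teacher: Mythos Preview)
Your proof is correct and follows essentially the same approach as the paper: both arguments unwind the definition of $J^H_{k+1}(S)$ to conclude $f_*(x)x^{-1}\in H_{k+1}$ and hence that its class in $H_k/H_{k+1}$ vanishes. You are slightly more explicit than the paper in invoking Theorem~\ref{T:welldefined} and the universal property of quotients to obtain the induced map, and your closing remark on why only one inclusion is claimed is a nice addition.
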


\begin{proof}
By definition, $J^H_{k+1}= \ker\left(Mod(S) \rightarrow \aut (F/H_{k+1})\right)$.  Thus for \linebreak $[f] \in J^H_{k+1}, [x] \in F/H$, we have for any representative homeomorphism \linebreak $f \in [f]$, $f_*(x)=x \mod H_{k+1}$.  Rewriting this expression we see \linebreak $f_*(x)x^{-1} \in H_{k+1}$.  Thus $ \left[f_*(x)x^{-1}\right]=1$ as an element of $H_k/H_{k+1}$.  Hence $[f]\in \ker \tau'_k$.
\end{proof}

\subsection{Higher-order Magnus Subgroups}

While the higher-order Johnson subgroups and homomorphisms are defined for any characteristic subgroup $H$, this machinery is of particular interest in the case where $H$ is the commutator subgroup of $F$, denoted by $[F,F]$ or $F'$.  Through the remainder of the paper, we focus primarily on this case.  For clarity, we repeat the definitions of the higher-order Johnson subgroups and homomorphisms here for this special case. 

\begin{definition}
For $k \ge 2$, the higher-order Magnus subgroups $M_k(S)$ are given by $M_k(S)=J^{[F,F]}_k(S)$.  Equivalently, 
$$
M_k(S)= \ker\left( Mod(S) \rightarrow \aut(F/F'_k)\right)
$$
\end{definition}

It is of particular importance that $M_1(S)=\ker(Mod(S) \rightarrow \aut(F/F'_{1})$ is the Torelli group, and $M_2(S)=\ker(Mod(S) \rightarrow \aut(F/F'_{2})$ is the kernel of the Magnus representation of the Torelli group.  Thus the higher-order Magnus filtration,
$$
Mag(S)=M_2(S) \supset M_3(S) \supset \cdots \supset M_k(S) \supset \cdots
$$
 is a filtration of the Magnus kernel.

To investigate the structure of these higher-order Magnus subgroups, we will make frequent use of their corresponding higher-order Johnson homomorphisms.

\begin{definition}
The higher-order Magnus homomorphisms, 
$$
\tau'_k(f): M_k(S) \rightarrow Hom_{\mathbb{Z}\left[F/F'\right]}(F'/F'' \rightarrow F'_k/F'_{k+1}),
$$
are the higher-order Johnson homomorphisms with characteristic subgroup $F'$.   
\end{definition}

\begin{remark}
Note that as a special case of \prop{kernelcontained} we have that $M_{k+1} \subset \ker \tau'_k$.  Hence the Magnus homomorphisms are well defined on successive quotients
$$
\tau'_k: \frac{M_{k}}{M_{k+1}} \rightarrow Hom_{\Z [F/F']}(F'/F'', F'_k/F'_{k+1}).
$$
Thus, just as with the Johnson homomorphisms, for $f \in M_k$, computing $\tau'_k(f) \ne 0$ pins the location of $f$ in the higher-order Magnus filtration precisely.
\end{remark}

%%%%%%%%%%%%%%%%%%%%%%%%%%%%%%%%%%%%%%%%%%%%%%%
\section{Algebraic Tools}
%%%%%%%%%%%%%%%%%%%%%%%%%%%%%%%%%%%%%%%%%%%%%%%

We take this opportunity to prove some algebraic results that will be of use in proving our main theorems.  

\subsection{Basis theorems and properties of lower central series quotients}\label{S:basistheorems}

We will make extensive use of several variations on the basis theorem for lower central series quotients of free groups \cite{H} Theorem 11.2.4.  We begin by discussing the notation and results for Hall's basis theorem before proceeding to generalizations of this result.

Let $E$ be a free group on a free basis $x_1, \dots x_r$.  We define basic commutators and construct an ordering on the basic commutators inductively as follows:

\begin{itemize}
\item The basic commutators of weight 1 are the generators $x_1, \dots x_r$ with $x_i < x_j$ for $i<j$.

\item  The basic commutators of weight $n$ ate the commutators $c=[c_i,c_j]$ where $c_i$ and $c_j$ are basic commutators with weights summing to $n$.  These are ordered lexicographically: if $c=[c_i, c_j]$ and $c'=[c_i',c_j']$ then $c>c'$ if $c_i>c_i'$ or if $c_i=c_i'$ and $c_j>c_j'$. 
\end{itemize}

By imposing the additional requirement that $c_i<c_j$ if the weight of $c_i$ is less than the weight of $c_j$, we achieve an ordering of all basic commutators.

A \emph{formal commutator}, $c_i$ is an element of $E$ satisfying either of the following conditions:
\begin{itemize}
\item $c_k=x_i$ for some generator $x_i$
\item $c_k=[c_i, c_j]$ where $c_i$ and $c_j$ are formal commutators.
\end{itemize}
The formal commutators are weighted by the following rules:
\begin{itemize}
\item $\w(c_k)=1$ if $c_k=x_i$
\item $\w(c_k)=\w(c_i)+\w(c_j)$ if  $c_k=[c_i, c_j]$.
\end{itemize}

Note that weights induce a partial ordering on the formal commutators.  We say $c_i > c_j$ if $\w(c_i)>\w(c_j)$.  Given these weights, we define a special class of formal commutators known as basic commutators as follows.

\begin{definition}[Hall]
The element $c_k \in E$ is a basic commutator if it satisfies one of the following conditions:
\begin{itemize}
\item $c_k=x_i$ for some generator $x_i$
\item $c_k=[c_i, c_j]$ where $c_i$ and $c_j$ are basic commutators with $c_i>c_j$ and if $c_i=[c_s,c_t]$ then $\w(c_j)\ge \w(c_t)$.
\end{itemize}
\end{definition}

Given this restriction to basic commutators we may define a strict ordering on basic commutators.  We say $c_i < c_j$ if one of the following holds:

\begin{enumerate}
\item $c_i=x_i$ and $c_j=x_j$ for $i<j$.
\item $\w(c_i)<\w(c_j)$
\item $\w(c_i)=\w(c_j)$ and $c_i=[c_{i_1},c_{i_2}]$, $c_j=[c_{j_1},c_{j_2}]$ with $c_{j_1}<c_{j_1}$.
\item $\w(c_i)=\w(c_j)$ and $c_i=[c_k,c_s]$, $c_j=[c_k,c_t]$ with $c_s<c_t$.
\end{enumerate}

Above we have given a precise construction of a strict ordering on basic commutators.  While this ordering is consistent with Hall's original definition of ordering on basic commutators, he only insisted that the ordering be consistent with the partial ordering given by the weights and allowed for arbitrary ordering of commutators of the same weight.  For our generalizations of the basis theorem, we find it advantageous to work with the specific ordering given above.  This specific ordering of basic commutators has appeared before in \cite{W}.

To speak precisely about commutators and lower central series, we also introduce some new terminology.

\begin{definition}
Let $a_1, \dots, a_n \in G$.  We define an $n$-bracketing of $a_1, \dots, a_n$ inductively by

\begin{itemize}
\item The 1-bracketing of $a_1$ is $a_1$

\item A $n$-bracketing of $a_1, \dots, a_n$ is any commutator $[c_k,c_{n-k}]$ where $c_k$ is a $k$-bracketing of $a_1, \dots, a_k$ and $c_{n-k}$ is an $(n-k)$-bracketing of $a_{k+1}, \dots, a_n$.
\end{itemize}

We call an element $a_i$ in an $n$-bracketing of $a_1, \dots, a_n$ an entry.
\end{definition}

Note that the definition of $n$-bracketing is not very restrictive.  For example, the commutators  $[[a_1,a_2],[a_3,a_4]]$, $[[[a_1,a_2],a_3],a_4]$, and $[a_1,[a_2,[a_3,a_4]]]$ are all $4$-bracketings of the entries $a_1,a_2,a_3,a_4$.  Note that by the definition, any $n$-bracketing is an element of $G_n$, but it is possible for an $n$-bracketing to lie in a deeper term of the lower central series. 

Given these definitions for commutators, we are now equipped to approach the basis theorem.

\begin{theorem}[Hall]\label{T:basis}
If $E$ is the free group with free generators $x_1, \dots, x_r$ and if in a sequence of basic commutators $c_1, \dots, c_t$ are those of weight $1,2, \dots, k$ then an arbitrary element $g$ of $E$ has a unique representation 
$$
g=c_1^{e_1} \cdots c_t^{e_t} \,\, \mod \,\, E_{k+1}
$$
The basic commutators of weight $k$ form a basis for the free abelian group $E_k/E_{k+1}$.
\end{theorem}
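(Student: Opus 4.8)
The plan is to prove the two halves of the statement separately: that every $g\in E$ admits a \emph{collected} expression $g=c_1^{e_1}\cdots c_t^{e_t}\bmod E_{k+1}$, and that this expression is unique. By an easy induction on weight, uniqueness reduces to the claim that the basic commutators of weight exactly $n$ are $\Z$-linearly independent in $E_n/E_{n+1}$ for each $n\le k$; the existence half will simultaneously show these commutators span $E_n/E_{n+1}$, so the two together yield that $E_n/E_{n+1}$ is free abelian on them, which is the last sentence of the theorem.

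\emph{Existence (the collection process).} Writing $g$ as a word in the $x_i^{\pm1}$, I would run Hall's collection process: using the commutator identities recorded in the proof of \thm{welldefined} (namely $[uv,w]=\lup{u}[v,w]\,[u,w]$ and $[u,vw]=[u,v]\,\lup{v}[u,w]$) together with the rewriting rule $vu=uv\,[v^{-1},u^{-1}]$, one repeatedly transports the least uncollected basic commutator leftward toward the already-collected prefix. Each elementary move replaces an adjacent product $c_jc_i$ by $c_ic_j$ times a product of basic commutators each of weight strictly greater than $\w(c_i)$, so no new occurrences of small commutators are created. Modulo $E_{k+1}$ all commutators of weight exceeding $k$ vanish and only finitely many weights ever occur, so by Hall's termination argument (an induction on a suitable ``collection weight'', \cite{H}, Theorem~11.2.4) the process halts in the collected form $c_1^{e_1}\cdots c_t^{e_t}\bmod E_{k+1}$. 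Applying this to $g\in E_k$, which is generated by the basic commutators of weight $\ge k$, the same bookkeeping shows $E_k/E_{k+1}$ is generated by the basic commutators of weight exactly $k$.

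\emph{Independence (the Magnus embedding).} It remains to show the weight-$n$ basic commutators are $\Z$-independent in $E_n/E_{n+1}$. For this I would use the Magnus embedding $\Phi\colon E\hookrightarrow\Z\langle\langle X_1,\dots,X_r\rangle\rangle$ into formal power series in $r$ noncommuting variables, $x_i\mapsto 1+X_i$ (injectivity of $\Phi$ is Magnus's theorem). For $g\in E_n$ one has $\Phi(g)=1+(\text{terms of degree}\ge n)$, and sending $g$ to the degree-$n$ homogeneous component of $\Phi(g)-1$ defines a homomorphism $E_n/E_{n+1}\to\mathcal{L}_n$, where $\mathcal{L}_n$ is the degree-$n$ part of the free Lie ring on $X_1,\dots,X_r$ inside the associative algebra (a free abelian group, by the classical structure theory of free Lie rings). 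A basic commutator of weight $n$ maps under this to the corresponding iterated \emph{Lie} bracket of the $X_i$, i.e.\ to a basic Lie monomial; by the Hall basis theorem for free Lie rings the degree-$n$ basic Lie monomials form a $\Z$-basis of $\mathcal{L}_n$, of rank $\tfrac1n\sum_{d\mid n}\mu(d)r^{n/d}$ by Witt's formula. Hence the images of the weight-$n$ basic commutators are $\Z$-independent, so any $\Z$-relation among the weight-$n$ basic commutators in $E_n/E_{n+1}$ is trivial. Together with the spanning statement from the collection process this shows $E_n/E_{n+1}$ is free abelian with basis the weight-$n$ basic commutators, and collected expressions modulo $E_{k+1}$ are unique.

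The main obstacle is the collection process itself: verifying that left-collection terminates and really produces the asserted collected form modulo $E_{k+1}$, with every correction term of strictly larger weight, is the combinatorially delicate point, and in a self-contained account I would reproduce Hall's induction rather than re-derive it. The Lie-theoretic inputs (injectivity of $\Phi$, freeness of $\mathcal{L}_n$, the Hall basis for free Lie rings, Witt's dimension formula) are standard and would simply be cited; the only compatibility to check is that the ordering on basic commutators fixed in \sec{basistheorems} is consistent with the recursion defining basic Lie monomials, which is immediate since both use the same bracketing rule. As an alternative to the Magnus embedding, independence can instead be obtained by Hall's original counting argument, matching the number of basic commutators of weight $n$ against a direct lower bound for the rank of $E_n/E_{n+1}$.
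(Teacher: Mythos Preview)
The paper does not prove this theorem at all: it is stated as a classical result of Hall (attributed in the theorem heading and cited as \cite{H}, Theorem~11.2.4) and used as a black box. So there is no ``paper's own proof'' to compare against; the paper treats this as background.

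That said, your outline is the standard two-part argument and is essentially correct. Existence via the collection process and independence via the Magnus embedding into $\Z\langle\langle X_1,\dots,X_r\rangle\rangle$ (or equivalently via the graded free Lie ring) is exactly how Hall's theorem is proved in the references you cite. One small caution: in your independence step you appeal to ``the Hall basis theorem for free Lie rings'' to conclude that basic Lie monomials are independent in $\mathcal{L}_n$. Depending on how one sets things up, this can be circular, since the group-theoretic and Lie-theoretic Hall basis theorems are often proved together. The clean way to avoid this is either to prove independence of the Lie monomials directly inside the free associative algebra (via leading monomials under a suitable order), or to use Hall's original counting argument that you mention as an alternative, which compares the number of weight-$n$ basic commutators against the rank of $E_n/E_{n+1}$ computed from Witt's formula. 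Either route closes the loop without self-reference.
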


Hall's basis theorem applies only to lower central series quotients of finitely generated free groups.  Below, we generalize the basis theorem to hold for lower central series quotients of infinitely generated free groups.

\begin{corollary}\label{C:basistheorem}
If $E$ is the free group with free generators $x_1, x_2, \dots$ and if in a sequence of basic commutators $\{c_i\}$ are those of weight $1,2, \dots, k$ then an arbitrary element $g$ of $E$ has a unique representation 
$$
g=\prod c_i^{e_i} \,\, \mod \,\, E_{k+1}
$$
where $e_i=0$ for all but finitely many $i$.  The basic commutators of weight $k$ form a basis for the free abelian group $E_k/E_{k+1}$.
\end{corollary}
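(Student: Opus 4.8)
The plan is to bootstrap from Hall's basis theorem (\thm{basis}), which handles the finitely generated case, using a direct limit argument. Write $E = \bigcup_{r} E^{(r)}$ where $E^{(r)}$ is the free subgroup on $x_1, \dots, x_r$, so that $E^{(1)} \subset E^{(2)} \subset \cdots$ and every element of $E$, and indeed every finite collection of elements, lies in some $E^{(r)}$. The basic commutators of $E$ of weight at most $k$ that involve only the generators $x_1, \dots, x_r$ are exactly the basic commutators of $E^{(r)}$ of weight at most $k$; this is immediate from the inductive definition, since whether a formal commutator is basic and what its weight is depends only on its constituent generators, not on the ambient free group. The key compatibility fact I would record first is that the inclusion $E^{(r)} \hookrightarrow E$ induces, for each $k$, an injection $E^{(r)}_k / E^{(r)}_{k+1} \hookrightarrow E_k / E_{k+1}$ that sends basic commutators to basic commutators; injectivity follows either from \prop{stallings} (the inclusion is split by the retraction $x_i \mapsto x_i$ for $i \le r$, $x_i \mapsto 1$ for $i > r$, which induces an injection on $H_1(-;\Q)$ in the required direction — indeed a splitting) or more elementarily from the fact that a retraction $E \to E^{(r)}$ exists and lower central series are functorial.

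Next I would prove the existence and uniqueness of the claimed representation. Given $g \in E$, choose $r$ with $g \in E^{(r)}$. By \thm{basis} applied to $E^{(r)}$, there is a unique expression $g = \prod c_i^{e_i} \bmod E^{(r)}_{k+1}$ with the $c_i$ ranging over basic commutators of $E^{(r)}$ of weight $\le k$ (all but finitely many $e_i$ zero, trivially, since $E^{(r)}$ has finitely many such). Each such $c_i$ is a basic commutator of $E$ of weight $\le k$, so this is a representation of $g$ of the desired form modulo $E^{(r)}_{k+1} \subseteq E_{k+1}$, hence modulo $E_{k+1}$. For uniqueness, suppose $g \equiv \prod c_i^{e_i} \equiv \prod c_i^{e_i'} \bmod E_{k+1}$ are two such representations; only finitely many $c_i$ appear with a nonzero exponent in either, so choose $r$ large enough that $g$ and all those finitely many basic commutators lie in $E^{(r)}$. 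Then $\prod c_i^{e_i - e_i'} \in E_{k+1} \cap E^{(r)}$. The point I need here is that $E_{k+1} \cap E^{(r)} = E^{(r)}_{k+1}$: the inclusion $\supseteq$ is clear, and $\subseteq$ follows from the injectivity of $E^{(r)}_k/E^{(r)}_{k+1} \hookrightarrow E_k/E_{k+1}$ established above (an element of $E^{(r)}_k$ lying in $E_{k+1}$ maps to zero, hence lies in $E^{(r)}_{k+1}$), applied inductively on the weight together with the fact that $E^{(r)}/E^{(r)}_{k+1}$ injects into $E/E_{k+1}$. So $\prod c_i^{e_i - e_i'} \in E^{(r)}_{k+1}$, and Hall's uniqueness in $E^{(r)}$ forces $e_i = e_i'$ for all $i$.

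Finally, the statement that the weight-$k$ basic commutators form a basis for $E_k/E_{k+1}$ follows by assembling the $E^{(r)}$ statements. Spanning: any element of $E_k$ lies in some $E^{(r)} \cap E_k = E^{(r)}_k$, which by \thm{basis} is spanned mod $E^{(r)}_{k+1} \subseteq E_{k+1}$ by the weight-$k$ basic commutators of $E^{(r)}$, a subset of those of $E$. Linear independence: a finite $\Z$-linear relation among weight-$k$ basic commutators of $E$ that holds in $E_k/E_{k+1}$ involves only commutators in some $E^{(r)}$, and by the identification $E_{k+1} \cap E^{(r)}_k = E^{(r)}_{k+1}$ it descends to a relation in $E^{(r)}_k/E^{(r)}_{k+1}$, which is trivial by \thm{basis}. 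Thus $E_k/E_{k+1} = \varinjlim_r E^{(r)}_k/E^{(r)}_{k+1}$ is free abelian on the weight-$k$ basic commutators.

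I expect the only genuine subtlety — the "main obstacle," though it is minor — to be the verification that $E_{k+1} \cap E^{(r)} = E^{(r)}_{k+1}$, i.e. that the lower central series of the subgroup is the restriction of the lower central series of the whole group. This is exactly where the existence of the retraction $E \to E^{(r)}$ (or the Stallings-type injectivity in the preceding remark) is used; everything else is a routine passage to the direct limit from Hall's theorem.
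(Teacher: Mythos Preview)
Your proposal is correct and follows essentially the same route as the paper: both arguments write $E$ as the union of the finitely generated subgroups $E^{(r)}$, invoke injectivity of $E^{(r)}_k/E^{(r)}_{k+1} \hookrightarrow E_k/E_{k+1}$ (via \prop{stallings}), and then reduce existence, uniqueness, spanning, and independence to Hall's theorem in a sufficiently large $E^{(r)}$. Your write-up is in fact somewhat cleaner than the paper's in two respects: you isolate explicitly the key identity $E_{k+1}\cap E^{(r)} = E^{(r)}_{k+1}$ (which the paper uses only implicitly), and you note that the retraction $E \to E^{(r)}$ already gives this identity without appealing to Stallings, which is a more elementary alternative.
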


\begin{proof}
Let $E(i)$ be the free group on $x_1, \dots, x_i$.  Note that the natural inclusion $E(i) \rightarrow E(j)$ for $j>i$ sends basic commutators to basic commutators and respects the ordering on basic commutators.

We first show that show that the map $\iota_{i,j}:E(i) \rightarrow E(j)$ induces an injection on the lower central series quotients 
$$
\overline{\iota}_{i,j}:E(i)_k/E(i)_{k+1} \hookrightarrow E(j)_k/E(j)_{k+1}.
$$
Note that $H_2(E(i), \Q)=H_2(E(i), \Q)=0$, as the wedge of $i$ or $j$ circles is a $K(G,1)$ for $E(i)$ or $E(j)$ respectively.  Furthermore, $\iota_{i,j}$ induces an injection $H_1(E(i), \Q) \rightarrow H_1(E(j), \Q)$.  By \prop{stallings}, $\iota_{i,j}$ induces an injection $\overline{\iota}_{i,j}:E(i)_k/E(i)_{k+1} \hookrightarrow E(j)_k/E(j)_{k+1}$ since free groups have the same rational lower central series and lower central series.

Let $\iota_i:E(i) \rightarrow E$ be the natural inclusion map sending $x_k \mapsto x_k$ \linebreak for $k \le i$.  By an analogous argument, $\iota_i$ induces an injection \linebreak $\overline{\iota}_{i}:E(i)_k/E(i)_{k+1} \hookrightarrow E_k/E_{k+1}$.

  Suppose $[x] \mapsto 0 \in E(i+1)_k/E(i+1)_{k+1}$.  Then there exists a representative $x$ of $[x]$ such that $x \in E(i+1)_{k+1}$.  However $x \in E(i)$ so by the basis theorem for finitely generated free groups, $x$ may be written uniquely in terms of basic commutators of $E(i)$: $x=c_1^{e_1} \cdots c_t^{e_t} \mod \,\, E(i)_{k+1}$.   This is also a representation for $x$ in terms of basic commutators of $E(i+1) \mod E(i+1)_{k+1}$, and hence the unique representation in $E(i+1)$.  However $x=0 \mod E(i+1)_{k+1}$, so because these basic commutators $c_1, \dots,c_t$ don't involve $x_{i+1}$, $x=0 \mod E(i)_{k+1}$. 

Given this it is easily checked that $\{E(i)_k/E(i)_{k+1}, \iota_{ij} \}$ is a directed system of groups.  We will show that $E_k/E_{k+1}$ is the direct limit of this \linebreak system. For this it suffices to show that for a group $G$ and maps \linebreak $f_i:E(i)_k/E(i)_{k+1} \rightarrow G$ such that $f_i=f_j \circ \iota_{ij}$, there exists a map \linebreak $f: E_k/E_{k+1} \rightarrow G$ such that $f \circ \iota_i=f_i$.  For any element $x \in E_k/E_{k+1}$, $x$ can be written as a finite length word in the generators $x_1, \dots$.  Hence $x \in E(i)_k/E(i)_{k+1}$ for some $i$.  Define $f(x)=f_i(x)$.  It is clear the resulting map $f$ is well defined and has the desired properties.  

To prove the first statement of the theorem, let $g \in E$, then $g \in E(i)$ for some $i$.  Hence in $E(i)$ there is a unique representation for $g$ as \linebreak $g=c_1^{e_1} \cdots c_t^{e_t} \,\, \mod \,\, E(i)_{k+1}$.  As $E(i)_k \subset E_k$, $g=c_1^{e_1} \cdots c_t^{e_t} \,\, \mod \,\, E_{k+1}$ is a representation of $g$ in the desired form in $E$.  Suppose there is another representation of this form, $g=d_1^{\epsilon_1} \cdots d_s^{\epsilon_s} \,\, \mod \,\, E_{k+1}$.  There exists a $j$ such that all of the basic commutators $c_1, \dots , c_t, d_1, \dots , d_s \in E(j)$.  Then by the basis theorem for finitely generated free groups these representations must be the same.

To prove the second statement, note that as $E_k/E_{k+1}$ is a direct limit of $E(i)_k/E(i)_{k+1}$, it follows from the basis theorem for finitely generated free groups that the basic commutators of weight $k$ generate $E_k/E_{k+1}$.  Furthermore, for any finite collection of basic commutators of weight $k$ $c_1, \dots ,c_m$, there is some $i$ such that $c_1, \dots , c_m \in E(i)_k/E(i)_{k+1}$.  Hence all commutators of weight $k$ are independent.  Therefore the basic commutators of weight $k$ form a basis for $E_k/E_{k+1}$.
\end{proof}

\begin{lemma}\label{L:homologybasistheorem}
Let $G$ be a group and let $a \in {G_k}$.  By the definition of the lower central series, $a$ is some $n$-bracketing of $a_{1}, \dots a_{n}$ where $a_{i} \in G$, $n \le k$, and $a_{i}\in G_{k_{i}}$ where $\sum_{i=1}^n k_i=k$.  Let $c_1, \dots, c_n$ be elements of $G$ with $c_i \in G_{k_{i}+1}$.  Let  $a'$ be the commutator $a$ where each entry $a_i$ is replaced by $c_ia_i$.  Then $a'=ca$ for some $c \in G_{k+1}$
\end{lemma}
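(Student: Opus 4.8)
The plan is to argue by induction on the weight $k$ (equivalently on the structure of the $n$-bracketing), using the two commutator identities $[ux,y]=\lup{u}[x,y][u,y]$ and $[x,vy]=[x,v]\lup{v}[x,y]$ that already appear in the proof of \thm{welldefined}. The base case is $k=1$: then $a=a_1$ is a single entry, $a'=c_1a_1$ with $c_1\in G_2$, and indeed $a'=ca$ with $c=c_1\in G_2=G_{k+1}$. For the inductive step, write $a=[b,b']$ where $b$ is a $p$-bracketing of $a_1,\dots,a_p$ lying in $G_{p'}$ with $p'=k_1+\cdots+k_p$, and $b'$ is a $q$-bracketing of $a_{p+1},\dots,a_n$ lying in $G_{q'}$ with $q'=k_{p+1}+\cdots+k_n$, so $p'+q'=k$. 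By the inductive hypothesis applied to $b$ (with the entries $c_ia_i$, $i\le p$) and to $b'$ (with the entries $c_ia_i$, $i>p$), we get $b'_{\mathrm{new}}=db$ for some $d\in G_{p'+1}$ and $b''_{\mathrm{new}}=eb'$ for some $e\in G_{q'+1}$, where $b'_{\mathrm{new}}, b''_{\mathrm{new}}$ denote the correspondingly-modified brackets. Then $a'=[db,eb']$.

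The core computation is to expand $[db,eb']$ using the two identities:
\begin{align*}
[db,eb']&=\lup{d}[b,eb']\,[d,eb']\\
&=\lup{d}\bigl([b,e]\,\lup{e}[b,b']\bigr)[d,eb'].
\end{align*}
Now I would check the weights of the "error" terms. Since $d\in G_{p'+1}$ and $e\in G_{q'+1}$: the term $[d,eb']$ lies in $[G_{p'+1},G]\subset G_{p'+2}$, hence (as $q'\ge 1$) in $G_{p'+q'+1}=G_{k+1}$; the term $[b,e]$ lies in $[G_{p'},G_{q'+1}]\subset G_{p'+q'+1}=G_{k+1}$; and conjugation by $d$ or by $e$ changes an element of $G_{k+1}$ only by a further commutator in $[G_{k+1},G]\subset G_{k+2}\subset G_{k+1}$, so conjugates of elements of $G_{k+1}$ remain in $G_{k+1}$. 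Collecting terms, $a'=[db,eb']=\lup{d}\lup{e}[b,b']\cdot(\text{stuff in }G_{k+1})$, and since $\lup{d}\lup{e}[b,b']$ differs from $[b,b']=a$ by an element of $[G_k,G]\subset G_{k+1}$, we conclude $a'=ca$ for some $c\in G_{k+1}$, completing the induction.

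The main obstacle is purely bookkeeping: one must be careful that the identities $[ux,y]=\lup{u}[x,y][u,y]$ and $[x,vy]=[x,v]\lup{v}[x,y]$ are applied with the correct grouping (and that "$c$" need not be central — it genuinely lives in $G_{k+1}$ and gets multiplied on the correct side), and one must repeatedly invoke the normality of each $G_m$ in $G$ to absorb conjugations. I would present the weight estimates as a short displayed list rather than grinding through every conjugation, since the mechanism — every error term is a commutator of something in $G_{p'}$ or $G_{p'+1}$ with something in $G_{q'}$ or $G_{q'+1}$, and $p'+q'=k$ — is uniform. A remark worth including: the hypothesis only gives $c_i a_i$ with $c_i\in G_{k_i+1}$, not $c_i$ central, so one cannot simply "pull all the $c_i$ out"; the inductive packaging through sub-brackets is what makes the argument go through cleanly.
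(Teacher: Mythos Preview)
Your approach is essentially the same as the paper's: both argue by (strong) induction on the weight, write the outer bracket as $[db,eb']$ with $d\in G_{p'+1}$ and $e\in G_{q'+1}$ coming from the inductive hypothesis, expand via the identities $[ux,y]=\lup{u}[x,y][u,y]$ and $[x,vy]=[x,v]\lup{v}[x,y]$, and then check that every error term lands in $G_{k+1}$.

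There is, however, a genuine slip in one of your weight estimates. You write that $[d,eb']\in[G_{p'+1},G]\subset G_{p'+2}$, ``hence (as $q'\ge 1$) in $G_{p'+q'+1}$.'' But the containment $G_{p'+2}\subset G_{p'+q'+1}$ goes the wrong way unless $q'\le 1$: for $q'\ge 2$ one has $G_{p'+q'+1}\subsetneq G_{p'+2}$, so this step fails as written. The fix is to use the full weight of $eb'$ rather than throwing it away: since $e\in G_{q'+1}\subset G_{q'}$ and $b'\in G_{q'}$, we have $eb'\in G_{q'}$, and therefore
\[
[d,eb']\in[G_{p'+1},G_{q'}]\subset G_{p'+q'+1}=G_{k+1},
\]
which is exactly what you need. (The paper's version of this step makes the analogous estimate $[c_m,c_na_n]\in G_{m+n+1}$ using that $c_na_n\in G_n$.) With this one-line correction your argument is complete and matches the paper's proof.
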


\begin{proof}
We prove this using strong induction.  For the case $k=2$, \linebreak $a=[a_{i_1},a_{i_2}]$.  Using the commutator identities $[ux,y]= \lup{u}[x,y][u,y]$ and $[x,vy]=[x,v] \lup{v}[x,y]$ we can perform the following computation:

\begin{eqnarray*}
a'&=& [c_1 a_{i_1}, c_2 a_{i_2}]\\
&=& \lup{c_1}[a_{i_1}, c_2]  c_2 c_1 \lup{[a_{i_1}, a_{i_2}]} \left(c_1^{-1} c_2^{-1} [c_1, c_2 x_{i_2}]\right) [a_{i_1}, a_{i_2}]\\
\end{eqnarray*}

Hence for $c=\lup{c_1}[a_{i_1}, c_2]  c_2 c_1 \lup{[a_{i_1}, a_{i_2}]} \left(c_1^{-1} c_2^{-1} [c_1, c_2 a_{i_2}]\right)$ our base case holds.

For the inductive step, suppose $a=[a_m,a_n]$ where $a_m \in G_m$, and $a_n \in G_n$.  By the inductive hypothesis, $a_m'=c_m a_m$ where $c_m \in G_{m+1}$ and $a_n'=c_n a_n$ where $c_n \in G_{n+1}$

\begin{eqnarray*}
a'&=&[a_m',a_n']\\
&=& [c_m a_m, c_n a_n]\\
&=& \lup{c_m}[a_m, c_n]  c_n c_m \lup{[a_m, a_n]} \left(c_m^{-1} c_n^{-1} [c_m, c_n a_n]\right) [a_m, a_n]
\end{eqnarray*}

To finish the proof we must show 
$$
\lup{c_m}[a_m, c_n]  c_n c_m \lup{[a_m, a_n]} \left(c_m^{-1} c_n^{-1} [c_m, c_n a_n]\right) \in G_{n+m+1}.
$$ 
Note that $[a_m,c_n] \in G_{m+n+1}$, and so any conjugate is also in $G_{m+n+1}$.  We simplify the above expression as modulo $G_{m+n+1}$ as follows:

\begin{align*}
\lup{c_m}[a_m, c_n]  c_n c_m \lup{[a_m, a_n]} & \left(c_m^{-1} c_n^{-1} [c_m, c_n a_n]\right)\\
%&=c_n c_m \lup{[a_m, a_n]} \left(c_m^{-1} c_n^{-1}c_m c_n a_n c_m^{-1} a_n^{-1} c_n^{-1}\right)\\
%&=c_n c_m \lup{[a_m, a_n]} \left([c_m^{-1}, c_n^{-1}] a_n c_m^{-1} a_n^{-1} c_n^{-1}\right)\\
%&=c_n c_m \lup{[a_m, a_n]} \left([c_m^{-1}, c_n^{-1}] [a_n c_m^{-1}] (c_n c_m)^{-1}\right)\\
&=c_n c_m [a_m, a_n][c_m^{-1}, c_n^{-1}] [a_n c_m^{-1}] (c_n c_m)^{-1} [a_m, a_n]^{-1}
\end{align*}
As $[a_m,a_n] \in G_{m+n}$, it is in the center of $G/G_{m+n+1}$.  Also note that the commutators $[c_m^{-1},c_n^{-1}]$ and $[a_n, c_m^{-1}]$ are elements of $G_{n+m+1}$.  Thus the above expression is trivial modulo $G_{n+m+1}$.
%modulo $G_{n+m+1}$ the expression reduces further to:
%\begin{align*}
%\lup{c_m}[a_m, c_n]  c_n c_m \lup{[a_m, a_n]} \left(c_m^{-1} c_n^{-1} [c_m, c_n a_n]\right)
%&=c_n c_m [a_m, a_n][c_m^{-1}, c_n^{-1}] [a_n c_m^{-1}] (c_n c_m)^{-1} [a_m, a_n]^{-1}\\
%&=c_n c_m [a_m, a_n] (c_n c_m)^{-1} [a_m, a_n]^{-1}\\
%&=1
%\end{align*}

Hence for $c=\lup{c_m}[a_m, c_n]  c_n c_m \lup{[a_m, a_n]} \left(c_m^{-1} c_n^{-1} [c_m, c_n a_n]\right)$ our induction holds.
\end{proof}

\begin{corollary}\label{P:commutatoridentity}
Commutators in $\frac{G_k}{G_{k+1}}$ are linear in each entry.  In other words, given $a,b, c \in G$.  If $C \in \frac{G_k}{G_{k+1}}$ be an $n$-bracketing with $[ab,c]$ as an entry.  Let $C'$ be the commutator obtained by replacing the entry $[ab,c]$ with $[a,c][b,c]$.  Then $C=C'$.  In addition, if $C \in \frac{G_k}{G_{k+1}}$ with an entry $[a,bc]$ and $C'$ is the commutator obtained by replacing the entry $[a,bc]$ with $[a,b][a,c]$, then $C=C'$.
\end{corollary}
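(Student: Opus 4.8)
The plan is to reduce both statements to the base case $k=2$ identities $[ab,c]=\lup{a}[b,c]\,[a,c]$ and $[a,bc]=[a,b]\,\lup{b}[a,c]$, together with \lem{homologybasistheorem}, which tells us that replacing an entry $y$ of an $n$-bracketing by an element of the form $cy$ with $c\in G_{\w+1}$ (where $\w$ is the weight slot of that entry) changes the bracketing only by a factor in $G_{k+1}$, hence leaves it unchanged in $G_k/G_{k+1}$. The point is that in $G_k/G_{k+1}$ the correcting conjugations $\lup{a}(\cdot)$ and $\lup{b}(\cdot)$ appearing in the base identities can be absorbed into such a left factor, so that the "naive" linearity $[ab,c]=[a,c][b,c]$ and $[a,bc]=[a,b][a,c]$ holds once we are inside a bracketing of total weight $k$.

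First I would make precise how $[ab,c]$ sits as an entry: it occupies a weight-$1$ slot of the $n$-bracketing $C$ (since $a,b,c\in G$), and the subbracketings combining with it have weights summing to $k-1$. Write $C = W\big([ab,c]\big)$ schematically for the bracketing with this entry. Using $[ab,c]=\lup{a}[b,c]\,[a,c]$, I first replace the entry $[ab,c]$ by $\lup{a}[b,c]\,[a,c]$; then, since a commutator is a homomorphism in each entry up to lower-order terms, I want to split $W(uv)$ into $W(u)\cdot W(v)$ modulo $G_{k+1}$ — this is exactly the multiplicativity used in the proof of \thm{welldefined} and follows from the base identities by induction on the depth of the slot. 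That yields $C \equiv W\big(\lup{a}[b,c]\big)\cdot W\big([a,c]\big)$. Now $\lup{a}[b,c] = \big(a[b,c]a^{-1}[b,c]^{-1}\big)[b,c] = z\,[b,c]$ with $z=[a,[b,c]]\in G_3\subset G_{\w([b,c])+1}$ since $[b,c]\in G_2$; so \lem{homologybasistheorem} gives $W\big(\lup{a}[b,c]\big)\equiv W\big([b,c]\big)$ in $G_k/G_{k+1}$. Hence $C\equiv W([b,c])\cdot W([a,c]) = C'$, which is the first claim. The second claim is symmetric: $[a,bc]=[a,b]\,\lup{b}[a,c]$, and $\lup{b}[a,c]=[b,[a,c]]\,[a,c]$ with $[b,[a,c]]\in G_3$, so the same two moves (split, then absorb the conjugation via \lem{homologybasistheorem}) give $C\equiv W([a,b])\cdot W([a,c])=C'$.

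The one genuine thing to nail down carefully — the main obstacle — is the "splitting" step: that for a fixed bracketing shape $W$ with a distinguished weight-$1$ slot, the map $u\mapsto W(u) \bmod G_{k+1}$ is a homomorphism on the relevant elements $u$. This is not literally true for arbitrary $u$ in a free group, but it is true modulo $G_{k+1}$ when $W$ has total weight $k$, and the proof is a straightforward induction on the number of bracket operations separating the slot from the top, using the two base identities $[ux,y]=\lup{u}[x,y][u,y]$, $[x,vy]=[x,v]\lup{v}[x,y]$ at each stage and discarding commutators that land in $G_{k+1}$ (exactly the bookkeeping already carried out in \lem{homologybasistheorem}). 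Once that lemma-within-the-proof is in hand, the rest is the two-line application above. I would either state this splitting fact as a short sublemma or simply remark that it follows by the same induction as \lem{homologybasistheorem}.
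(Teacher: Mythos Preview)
Your proof is correct, and it lands in the same neighborhood as the paper's argument: both reduce to the base commutator identities together with \lem{homologybasistheorem}. The route differs in one move, though. Instead of stopping at $[ab,c]=\lup{a}[b,c]\,[a,c]$ and then invoking a separate splitting sublemma $W(uv)\equiv W(u)W(v)\bmod G_{k+1}$, the paper carries the identity one step further to
\[
[ab,c]=[a,c]\,\bigl[\lup{[c,a]a}b,\ \lup{[c,a]a}c\bigr].
\]
Now the whole entry $[ab,c]$ is exactly $[a,c][b',c']$ with $b'\equiv b\bmod G_{k_b+1}$ and $c'\equiv c\bmod G_{k_c+1}$, so $[a,c][b,c]$ differs from $[ab,c]$ by a factor one level deeper than the weight of that entry, and a single application of \lem{homologybasistheorem} to the full bracketing $C$ finishes the argument directly. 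Your version works, but the splitting step you correctly flag as the main obstacle is genuinely extra bookkeeping; the paper's sharper rewriting of $[ab,c]$ absorbs that work into the algebraic identity itself and lets \lem{homologybasistheorem} do everything in one shot.
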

\begin{proof}
We will prove the first identity.  The proof of the second is analogous.

In any group we have the identity 
\begin{align*}
[ab,c]&=\lup{a}[b,c][a,c]\\
&=[a,c]\lup{[c,a]a}[b,c]\\
&=[a,c][\lup{[c,a]a}b,\lup{[c,a]a}c].
\end{align*}

Let $b \in G_{k_b}$ and $c \in G_{k_c}$.  Note that the elements $b$ and $\lup{[c,a]a}b$ share the same class in $G/G_{k_b+1}$.  Similarly, the elements $c$ and $\lup{[c,a]a}c$ share the same class in $G/G_{k_c+1}$.  The result follows immediately from \lem{homologybasistheorem}.
\end{proof}

%\begin{corollary}\label{C:homologybasistheorem}
%Let $E$ be the free group with free generators $x_1, x_2, \dots$.  Let  $\bar{x_1}, \bar{x_2}, \dots$ be the classes of $x_1, x_2, \dots$ in $E/E'$.  Consider the basic commutators $\bar{c_i}$ of weights $1,2, \dots, k$ in elements $\bar{x_1}, \bar{x_2}, \dots$ defined in the same fashion as before (inductively from the ordering $\bar{x_1}<\bar{x_2}<\dots$).  An arbitrary element $g$ of $E$ has a unique representation 
%$$
%g=\bar{c_1}^{e_1} \cdots \bar{c_t}^{e_t} \,\, \mod \,\, E_{k+1}
%$$
%The basic commutators of weight $k$ form a basis for the free abelian group $E_k/E_{k+1}$.
%\end{corollary}
%
%\begin{proof}
%By \lem{homologybasistheorem}, the representation $g={c_1}^{e_1} \cdots {c_t}^{e_t} \,\, \mod \,\, E_{k+1}$ is unchanged by sending $x_i$ to another element in the same homology class.  It follows that $g=\bar{c_1}^{e_1} \cdots \bar{c_t}^{e_t} \,\, \mod \,\, E_{k+1}$ is a well defined representation of $g$.  The remaining statements follow directly from \cor{basistheorem}.
%
%\end{proof}

The following proposition establishes a relationship between bases for the lower central series quotients $\frac{E(n)_k}{E(n)_{k+1}}$ and $\frac{E(n-1)_k}{E(n-1)_{k+1}}$.

\begin{proposition}\label{P:splitting}
Let $E(n-1)$ be the free group on $\{x_1, \dots, x_{n-1}\}$ and let $E(n)$ be the free group on $\{x_1, \dots, x_{n}\}$.  Let 
$$
\pi: E(n) \rightarrow E(n-1) \cong E(n)/\langle x_n \rangle
$$
 be the natural quotient map.  The kernel of the induced map
$$
\overline{\pi}: \frac{E(n)_k}{E(n)_{k+1}} \rightarrow \frac{E(n-1)_k}{E(n-1)_{k+1}}
$$
is generated by weight $k$ basic commutators which have $x_n$ as an entry.
\end{proposition}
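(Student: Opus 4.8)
The plan is to use the basis theorem for finitely generated free groups (\thm{basis}) together with \cor{basistheorem} to get explicit bases on both sides, and then to analyze the quotient map $\overline{\pi}$ in terms of these bases. First I would fix the ordered set of basic commutators of $E(n)$ of weight $k$ and split it into two disjoint subsets: those commutators $c$ all of whose entries lie in $\{x_1, \dots, x_{n-1}\}$ (call these the ``old'' commutators) and those having at least one entry equal to $x_n$ (the ``new'' commutators). The key observation is that the old weight-$k$ basic commutators of $E(n)$ are \emph{exactly} the weight-$k$ basic commutators of $E(n-1)$: the defining conditions for being a basic commutator (\defn{} of Hall) and the ordering rules (1)--(4) only reference the generators actually appearing, so the inclusion $E(n-1) \hookrightarrow E(n)$ carries the basic-commutator basis of $\frac{E(n-1)_k}{E(n-1)_{k+1}}$ bijectively onto the set of old basic commutators.

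Next I would observe how $\overline{\pi}$ acts on these basis elements. Since $\pi$ kills $x_n$, any new basic commutator (which contains $x_n$ as an entry) is sent to a product of commutators each having a trivial entry, hence to $0$ in $\frac{E(n-1)_k}{E(n-1)_{k+1}}$; more carefully, setting $x_n = 1$ in a bracketing expression and using \cor{commutatoridentity}/multilinearity shows the image is trivial. On the other hand, $\pi$ fixes each old basic commutator, so $\overline{\pi}$ maps the old basis of $\frac{E(n)_k}{E(n)_{k+1}}$ isomorphically onto the full basis of $\frac{E(n-1)_k}{E(n-1)_{k+1}}$. Thus $\overline{\pi}$ is, with respect to the basic-commutator bases, the projection killing precisely the span of the new commutators, so $\ker \overline{\pi}$ is the free abelian subgroup generated by the weight-$k$ basic commutators having $x_n$ as an entry. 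This gives ``$\supseteq$'' immediately and ``$\subseteq$'' from the fact that if $g = \prod c_i^{e_i}$ (basic commutator representation) is in $\ker\overline{\pi}$, then applying $\overline{\pi}$ and using uniqueness of the basic-commutator representation in $E(n-1)$ (\thm{basis}) forces $e_i = 0$ for every old $c_i$.

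I expect the main obstacle to be the bookkeeping in the first step: carefully verifying that ``basic commutator of $E(n)$ with all entries among $x_1,\dots,x_{n-1}$'' coincides with ``basic commutator of $E(n-1)$'' requires checking that the recursive definition and all four clauses of the ordering are insensitive to enlarging the generating set, and that the inductive construction of basic commutators of a given weight never forces the introduction of $x_n$ into an otherwise ``old'' commutator. This is routine but needs to be stated precisely. A secondary technical point is justifying that $\overline{\pi}$ sends a new basic commutator to $0$: one should expand the $n$-bracketing, use that $\pi$ is a homomorphism, and invoke the multilinearity of commutators in $\frac{E(n-1)_k}{E(n-1)_{k+1}}$ from \cor{commutatoridentity} to see that any term containing the now-trivial generator $x_n$ as an entry vanishes. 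Once these two points are nailed down, the identification of the kernel with the span of the new basic commutators is immediate from the uniqueness clause of the basis theorem.
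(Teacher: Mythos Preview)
Your proposal is correct and follows essentially the same route as the paper: both arguments split the weight-$k$ basic commutators into those involving $x_n$ and those not, verify the former lie in $\ker\overline{\pi}$ (the paper does this by a short induction using $[\pi(c_1),\pi(c_2)]$ and $\pi(x_n)=1$ rather than invoking multilinearity), and then use the section $\overline{\iota}:E(n-1)_k/E(n-1)_{k+1}\hookrightarrow E(n)_k/E(n)_{k+1}$ together with uniqueness of the basic-commutator representation to show nothing else lies in the kernel. Your phrasing as ``$\overline{\pi}$ is the coordinate projection killing the new basis vectors'' is a slightly cleaner packaging of exactly the paper's retraction argument.
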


\begin{proof}
Let $K$ be the subgroup of $\frac{E(n)_k}{E(n)_{k+1}}$  generated by basic commutators which have $x_n$ as an entry.  We show $K=\ker \overline{\pi}$.  First, consider a basic commutator $c$ of weight $k$ which has $x_n$ as an entry.  We show that $\pi(c)=1$ using strong induction.

In the first case we consider a weight 2 basic commutator.  Suppose $c=[x_i,x_n]$ or $c=[x_n,x_i]$ for some $x_i$.  Then \begin{align*}
{\pi}(c)&=[\pi(x_i),\pi(x_n)]\\
&=[x_i,\pi(x_n)]\\
&=1.
\end{align*}
and similarly for $c=[x_n,x_i]$

Suppose for induction that for all commutators $c$ of weight less than $k$, that $\pi(c)=1$.  Let $c$ be a weight $k$ basic commutator with $x_n$ as an entry.  Then $c=[c_1,c_2]$ where $c_1$ and $c_2$ are basic commutators of weight $\le k-1$.  Either $c_1$ or $c_2$ must have $x_n$ as an entry.  If $c_1$ has $x_n$ as an entry then
\begin{align*}
\pi(c)&=[\pi(c_1),\pi(c_2)]\\
&=[1, \pi(c_2)]\\
&=1
\end{align*}
and similarly if $c_2$ has $x_n$ as an entry.  Thus $K \subset \ker \overline{\pi}$.

To show the opposite inclusion, let $\iota: E(n-1) \rightarrow E(n)$ be the natural inclusion map.  This map induces a monomorphism $\overline{\iota}: \frac{E(n-1)_k}{E(n-1)_{k+1}} {\rightarrow} \frac{E(n)_k}{E(n)_{k+1}}$.  Furthermore as $\overline{\pi}\circ \overline{\iota}$ is the identity map, $\overline{\pi}$ is a retract.
$$
 \frac{E(n-1)_k}{E(n-1)_{k+1}} \stackrel{\overline{\iota}}{\rightarrow} \frac{E(n)_k}{E(n)_{k+1}} \stackrel{\overline{\pi}}{\rightarrow} \frac{E(n-1)_k}{E(n-1)_{k+1}}
$$
Suppose $c \in  \frac{E(n)_k}{E(n)_{k+1}}$ and $c \notin K$.  Then by the basis theorem $c$ can be written as a product of  basic commutators $c_1, \dots, c_m$ in the generators $\{x_1, \dots, x_{n}\}$.  In this product there is some set of basic commutators $\{c_{i}|i \in A\}$ for which $c_{i} \notin K$.  Then by definition of $K$, for each $i \in A$, $c_{i}$ is a basic commutator in the generators $\{x_1, \dots, x_{n-1}\}$.  Thus for each $i \in A$, $c_{i} \in \im \overline{\iota}$ and the product of these basic commutators $\prod_{i \in A} c_{i}$ is nontrivial.  Then as $\overline{\pi}(c_{i})=c_i$ for each $i \in A$ and $\overline{\pi}(c_i)=1$ for $i \notin A$, it follows that $\overline{\pi}(c)=\prod_{i \in A} c_{i}$.  Hence $c \notin \ker \overline{\pi}$.  Therefore, $K=\ker \overline{\pi}$, as desired.
\end{proof}

\subsection{Module structure of the lower central series quotients}

If $E$ is a free group on $\{x_1, \dots, x_n\}$ and $E'$ is  its commutator subgroup, we may employ a particular basis for $E'$, given in \cite{T}.  We reproduce the basis here in a slightly modified form to suit our purposes.

\begin{theorem}(Tomaszewski)\label{T:Basis}
$E'$ is freely generated by the set
$$
B=\{[x_i, x_k]^{x_1^{d_1} \cdots x_k^{d_k}}|1\le i < k \le n, d_j \in \Z\}.
$$
\end{theorem}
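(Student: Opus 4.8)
The plan is to reduce the statement to the standard fact that $E'$ is a free group (being a subgroup of a free group, by Nielsen--Schreier) and then exhibit the set $B$ as a Schreier transversal-type generating set which is simultaneously free and spanning. Concretely, I would use the Reidemeister--Schreier rewriting process applied to the normal subgroup $E' \trianglelefteq E$ with quotient $E/E' \cong \Z^n$. A Schreier transversal for $E'$ in $E$ is given by the set of "ordered monomials" $T = \{x_1^{d_1} x_2^{d_2} \cdots x_n^{d_n} : d_j \in \Z\}$; one checks this is indeed a Schreier transversal (every prefix of a transversal element, in the chosen normal form, is again in $T$, once one fixes the convention for reading off prefixes of negative powers). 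The Reidemeister--Schreier theorem then hands us a free generating set for $E'$ consisting of the nontrivial elements $t x_i \overline{t x_i}^{-1}$ as $t$ ranges over $T$ and $x_i$ over the generators, where $\overline{w}$ denotes the transversal representative of the coset $wE'$.

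The key computational step is to identify these Reidemeister--Schreier generators with the conjugated commutators $[x_i,x_k]^{x_1^{d_1}\cdots x_k^{d_k}}$ appearing in $B$. For $t = x_1^{d_1}\cdots x_n^{d_n}$ and a generator $x_i$, the element $t x_i \overline{t x_i}^{-1}$ is trivial exactly when moving $x_i$ past the tail $x_{i+1}^{d_{i+1}}\cdots x_n^{d_n}$ costs nothing, i.e. one gets a nontrivial generator only when $x_i$ has to be commuted past strictly later generators; after normalizing, such a generator is a product of conjugates of commutators $[x_i,x_k]$ with $k>i$, conjugated by words of the form $x_1^{d_1}\cdots x_k^{d_k}$. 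I would carry out this bookkeeping carefully — this is the heart of the argument — to show that the resulting free generating set, after the obvious elementary substitutions (replacing one free generating set by another via invertible transformations that stay within the span), is exactly $B$. Equivalently, and perhaps more cleanly, I would verify directly that (a) $B$ generates $E'$ and (b) the cardinality/rank bookkeeping matches so that a generating set of the correct "size" in each graded piece must be free; but the substitution argument is the rigorous route.

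The main obstacle I anticipate is purely organizational: pinning down a clean normal form for elements of $E$ relative to the transversal $T$ (handling negative exponents and the order in which generators are swept), and then showing the change-of-generators from the raw Reidemeister--Schreier output to the symmetric-looking set $B$ is realized by a sequence of Nielsen transformations (or a triangular change of variables) that preserves freeness. An alternative I would keep in reserve, avoiding rewriting systems, is an inductive argument on $n$: write $E(n) = E(n-1) * \langle x_n\rangle$ and use the structure of $E(n)'$ as an iterated HNN/semidirect product over $E(n-1)'$, peeling off the generators involving $x_n$ — this meshes well with \prop{splitting} above and with the module-theoretic viewpoint the paper is building toward, and it localizes the combinatorial difficulty to a single step. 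Either way, once freeness and spanning are established the theorem follows; I would present whichever of the two routes produces the least painful exponent bookkeeping.
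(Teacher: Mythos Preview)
The paper does not supply its own proof of this statement: Theorem~\ref{T:Basis} is quoted as a result of Tomaszewski \cite{T}, with the remark that the basis is ``reproduce[d] here in a slightly modified form to suit our purposes.'' So there is no in-paper argument to compare your proposal against.

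That said, your Reidemeister--Schreier approach is essentially the standard route to results of this type and is, in outline, how Tomaszewski's original argument proceeds. Your identification of the transversal $T=\{x_1^{d_1}\cdots x_n^{d_n}\}$ is correct, and the raw Reidemeister--Schreier generators $t x_i\,\overline{t x_i}^{\,-1}$ are indeed the right starting point. The one place where your sketch is a bit loose is the passage from those raw generators to the conjugated commutators in $B$: a single raw generator $t x_i\,\overline{t x_i}^{\,-1}$ with $t=x_1^{d_1}\cdots x_n^{d_n}$ is not in general a single element of $B$ but a product of several (one for each $k>i$ with $d_k\ne 0$, picked up as you shuffle $x_i$ past the tail), so the change of basis is genuinely triangular rather than a bijection. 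You acknowledge this, but you should be explicit that the substitution is a finite sequence of Nielsen moves (each new generator replaces an old one by itself times a word in the remaining generators), which is what guarantees freeness is preserved. Your alternative inductive-on-$n$ strategy is also viable and, as you note, aligns well with Proposition~\ref{P:splitting}.
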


We use this free basis for $E'$ in conjunction with the tools developed in \sec{basistheorems} to prove the following lemma.

\begin{lemma}\label{L:torsion}
The module $\frac{E'_k}{E'_{k+1}}$ has no $\Z \left[ \frac{E}{E'} \right]$ torsion of the form \linebreak $(1-x_i)\omega=0$, where $x_i$ is a generator for $E$.
\end{lemma}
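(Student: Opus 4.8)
The plan is to use the explicit free basis for $E'$ furnished by \thm{Basis}, with respect to which conjugation by the first generator $x_1$ becomes a transparent ``exponent shift'' on a Hall basis of $E'_k/E'_{k+1}$; this shift has no finite orbits, and the absence of $(1-x_1)$-torsion follows at once. The case of an arbitrary generator $x_i$ reduces to $x_1$: letting $\phi$ be the automorphism of $E$ that transposes $x_1$ and $x_i$, $\phi$ induces a $\Z$-linear automorphism $\phi_*$ of $E'_k/E'_{k+1}$, and since the module action is by conjugation and $\phi(gzg^{-1})=\phi(g)\phi(z)\phi(g)^{-1}$ one has $\phi_*(x_i\cdot\omega)=x_1\cdot\phi_*(\omega)$, hence $\phi_*\big((1-x_i)\omega\big)=(1-x_1)\phi_*(\omega)$; as $\phi_*$ is bijective, a nonzero $\omega$ with $(1-x_i)\omega=0$ would give a nonzero element with $(1-x_1)(-)=0$.

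To make the $x_1$-action explicit: by \thm{Basis} the set $B=\{[x_i,x_k]^{x_1^{d_1}\cdots x_k^{d_k}}\mid 1\le i<k\le n,\ d_j\in\Z\}$ freely generates $E'$, so by \cor{basistheorem} the weight-$k$ basic commutators in the elements of $B$ form a $\Z$-basis of $E'_k/E'_{k+1}$ once an ordering of $B$ is chosen; I would order $B$ lexicographically by $k$, then $i$, then $(d_2,\dots,d_k)$, then $d_1$, so that the bijection
$$
\sigma\colon\ [x_i,x_k]^{x_1^{d_1}x_2^{d_2}\cdots x_k^{d_k}}\ \longmapsto\ [x_i,x_k]^{x_1^{d_1-1}x_2^{d_2}\cdots x_k^{d_k}}
$$
of $B$ is order preserving. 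The computation to run is: for $c=[x_i,x_k]^{x_1^{d_1}\cdots x_k^{d_k}}$ one has $x_1 c x_1^{-1}=[x_i,x_k]^{x_1^{d_1}\cdots x_k^{d_k}x_1^{-1}}$, the conjugator $x_1^{d_1}\cdots x_k^{d_k}x_1^{-1}$ has the same image in $E/E'$ as $x_1^{d_1-1}x_2^{d_2}\cdots x_k^{d_k}$, and (as noted when the $\Z[E/E']$-module structure was introduced) the class of $[x_i,x_k]^g$ in $E'/E''$ depends only on the image of $g$ in $E/E'$; these combine to give $x_1 c x_1^{-1}=\sigma(c)\cdot u$ with $u\in E''=E'_2$. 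Since conjugation is a homomorphism, for a weight-$k$ basic commutator $\beta$ with entries in $B$ it replaces each entry $c$ by $\sigma(c)\cdot u$, and by \lem{homologybasistheorem} this alters $\beta$ only by an element of $E'_{k+1}$; hence in $E'_k/E'_{k+1}$ multiplication by $x_1$ carries $[\beta]$ to $[\sigma(\beta)]$, where $\sigma(\beta)$ is the bracketing of the $\sigma$-images of the entries of $\beta$.

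As $\sigma$ is an order-preserving bijection of $B$, it carries weight-$k$ basic commutators bijectively to weight-$k$ basic commutators, so $x_1$ acts on $E'_k/E'_{k+1}$ by permuting the chosen basis through $\sigma$; and $\sigma$ strictly lowers the $x_1$-exponent of every entry, so no $\sigma^r$ with $r\ne 0$ fixes a weight-$k$ basic commutator. Now suppose $\omega=\sum_{j=1}^r e_j\beta_j\ne 0$ with distinct basic commutators $\beta_j$ and nonzero $e_j\in\Z$. If $x_1\cdot\omega=\omega$, comparing the basis expansions of $\sum_j e_j\sigma(\beta_j)$ and $\sum_j e_j\beta_j$ forces $\{\beta_1,\dots,\beta_r\}$ to be mapped onto itself by $\sigma$; being injective, $\sigma$ then permutes this finite nonempty set, so $\sigma^N$ fixes $\beta_1$ for some $N\ge 1$, a contradiction. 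Hence $(1-x_1)\omega\ne 0$, which with the reduction above proves the lemma.

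The real work, I expect, is the bookkeeping in the second paragraph --- confirming that conjugation by $x_1$ is, on the Tomaszewski basis, precisely the exponent shift $\sigma$ modulo a correction in $E''$ (so \lem{homologybasistheorem} applies, after moving the correction to the appropriate side of each entry), and that $\sigma$ is order preserving for an admissible Hall ordering of $B$. A more conceptual alternative --- $E'/E''$ embeds $\Z[E/E']$-linearly in a free module via Fox derivatives, and $E'_k/E'_{k+1}$ is the degree-$k$ part of the free Lie ring it generates, hence torsion free --- also works, but invokes the Magnus embedding and free Lie ring machinery not developed in this section.
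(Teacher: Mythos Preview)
Your proof is correct and follows essentially the same strategy as the paper's: both use the Tomaszewski basis $B$ for $E'$, equip it with a total order preserved by the shift induced by $x_1$-conjugation, and deduce that this shift permutes the Hall basis of $E'_k/E'_{k+1}$ without fixed points. The paper builds its ordering geometrically via the universal abelian cover and asserts that $B$ is literally stable under conjugation by $x_1$, whereas you give a purely combinatorial ordering and account for a possible $E''$-correction through \lem{homologybasistheorem}; this is a presentational difference rather than a different route, and if anything your treatment of that step is the more careful one.
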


\begin{proof}
Consider the free basis $B$ for $E'$ given by \thm{Basis}.  Note that the set of basis elements, $B$, maps to itself under conjugation by $x_1$.  Given an element $\omega \in \frac{E'_k}{E'_{k+1}}$, by \cor{basistheorem}, $\omega$ can be written as a product $\omega= \prod_{i=1}^m c_i^{\alpha_i}$ where $c_i$ are basic commutators in the elements of $H_1(F')$ and $c_i < c_{i+1}$ for all $i$.

Let $X$ be the wedge of $n$ circles and let $\widetilde X$ denote the universal \linebreak abelian cover of $X$.  Note that $\pi_1(X)=E$.  Thus $\pi_1\left(\widetilde X\right)=E'$ and \linebreak $H_1\left(\widetilde X\right)=H_1(E')$.  Let $v$ be the vertex of $X$.  Then by the long exact sequence of a pair we have that $i:H_1(E') \hookrightarrow H_1\left(\widetilde X, \tilde v \right)$, where the $i$ is induced by the natural inclusion.  Furthermore, as $B$ is a basis for $E'$, the induced function $i:B \rightarrow H_1\left(\widetilde X, \tilde v \right)$ is injective.  The universal abelian cover, $\tilde{X}$, is a 1-complex taking the form of a square grid with a dimension corresponding to each generator.  The vertices of this grid can be labeled by a vector $(a_1, a_2, \dots, a_n)$ where $a_i$ denotes the distance of the vertex in the $x_i$ direction.  These vertices can be ordered by the dictionary order.  That is, if $v=(a_1, \dots, a_n)$ and $v'=(a_1', \dots, a_n')$ then $v<v'$ if $a_1<a_1'$ or $a_i=a_i'$ for all $i<j$ and $a_j<a_j'$.

Any edge in the lattice can then be written as an ordered pair of vertices, $e=(v_1,v_2)$ with $v_1<v_2$.  The edges then inherit a strict ordering by the dictionary order on the weighted vertices.  That is, if $e=(v_1,v_2)$ and $e'=(v_1',v_2')$ then $e<e'$ if $v_1<v_1'$ or if $v_1=v_1'$ and $v_2<v_2'$. 

Any homology element of $H_1\left(\widetilde X, \tilde v \right)$ can be written as a finite sum of edges, $c=\sum_{i=1}^{l} b_i e_i$ where $e_i \le e_{i+1}$.  Thus the weighting on oriented edges of $E'$ induces an ordering on $H_1\left(\widetilde X, \tilde v \right)$ by the dictionary order in the same way.  We may represent any element $c$ as a vector $(b_1, b_2, \cdots)$ where $b_i$ is the coefficient of the edge $e_i$.  Note that by construction, such a vector has finitely many nonzero entries.   For $c=(b_1, \dots)$ and $c'=(b_1', \dots)$ then $c<c'$ if $b_1<b_1'$ or $b_i=b_i'$ for all $i<j$ and $b_j<b_j'$.  In this manner we obtain a strict ordering on $H_1\left(\widetilde X, \tilde v \right)$ and hence also on the basis $B$.  We can use this ordering of our basis to construct our basic commutators as in the basis theorem.  Conjugating by $x_1$ preserves the ordering on $H_1\left(\widetilde X, \tilde v \right)$ as it preserves the ordering on vertices.  Hence since $B$ is invariant under conjugation by $x_1$, each basic commutator $c_i$ in the product $\omega= \prod_{i=1}^m c_i^{\alpha_i}$ where $c_i$ is sent to another basic commutator in the elements of $B$, $c_i'$.  As the ordering on elements of $B$ is preserved under the conjugation by $x_1$, the $c_i'$ also satisfy $c_i'<c_{i+1}'$.

Hence by \cor{basistheorem}, $x_1 \cdot \omega$ can be written uniquely by the basis theorem as  $x_1 \cdot \omega= \prod_{i=1}^m c_i'^{\alpha_i}$, where $c_i'=\lup{x_1}c_i$. Note that $\omega \ne x_1 \cdot \omega$ as $c_1 \ne c_1'$ and thus the unique expressions of $\omega$ and $\lup{x_1} \omega$ have distinct least commutators.  Thus $(1-x_1)\omega \ne 0$.

The result that $(1-x_i)\omega \ne 0$ for any $i$ can be obtained by reordering the generators of $E$ such that $x_i$ plays the role of $x_1$.
\end{proof}

Note that $(1-x_1)(1-x_2) \omega \ne 0$ is equivalent to the statement that the map $\cdot (1-x_1)(1-x_2): \frac{E'_k}{E'_{k+1}} \rightarrow \frac{E'_k}{E'_{k+1}}$ is injective.  Hence if $\omega \ne \omega'$ then $(1-x_1)(1-x_2) \omega \ne (1-x_1)(1-x_2) \omega'$.    This fact will be employed in future Magnus homomorphism computations.

%%%%%%%%%%%%%%%%%%%%%%%%%%%%%%%%%%%%%%%%%%%%%%%
\section{Main Results}
%%%%%%%%%%%%%%%%%%%%%%%%%%%%%%%%%%%%%%%%%%%%%%%
In this Section we investigate properties of the higher-order Magnus subgroups.  \sec{subsurfaces} develops a way of obtaining mapping classes in $M_k(S)$ from those in $J_k(D)$ and shows these mapping classes to be nontrivial.  Given that it is known that $J_k(D)$ is nontrivial for all $k$, this shows that the higher dimensional analog $M_k(S)$ is nontrivial for all $k$ for genus $\ge 3$.  In \sec{maghomom} we seek to strengthen this result.  We will show that the Magnus homomorphisms are nontrivial on $M_k(S)$ given some conditions on $\tau_k(J_k(D))$.  Using these Magnus homomorphism computations we will exhibit a subgroup of $\frac{M_k}{M_{k+1}}$ isomorphic to a lower central series quotient of free groups.  Finally, in \sec{mainthm} we will show that $\frac{M_k(S)}{ M_{k+1}(S)}$ is infinite rank for all $k$.

\subsection{Constructing elements of $M_k$ via subsurfaces}\label{S:subsurfaces}

Let $S$ be a surface with genus $g \ge 2$ and 1 boundary component.  Let $D$ be a sphere with $n$ disks removed, $n \ge 3$.  We work to relate Johnson filtration on $D$ to the Magnus filtration on $S$ by considering separating embeddings of $D$ in $S$.

\begin{definition}
Let $D$ have boundary components $b_0, \dots, b_{n}$.  The map $i:D \rightarrow S$ is a separating embedding if $i$ is an embedding such that $i(b_1), \dots, i(b_n)$ are separating curves in $S$ and $i(b_0)$ is either a separating curve in $S$ or is the boundary component of $S$.
\end{definition}

\begin{figure}[h]
\begin{center}
\includegraphics[width=5in]{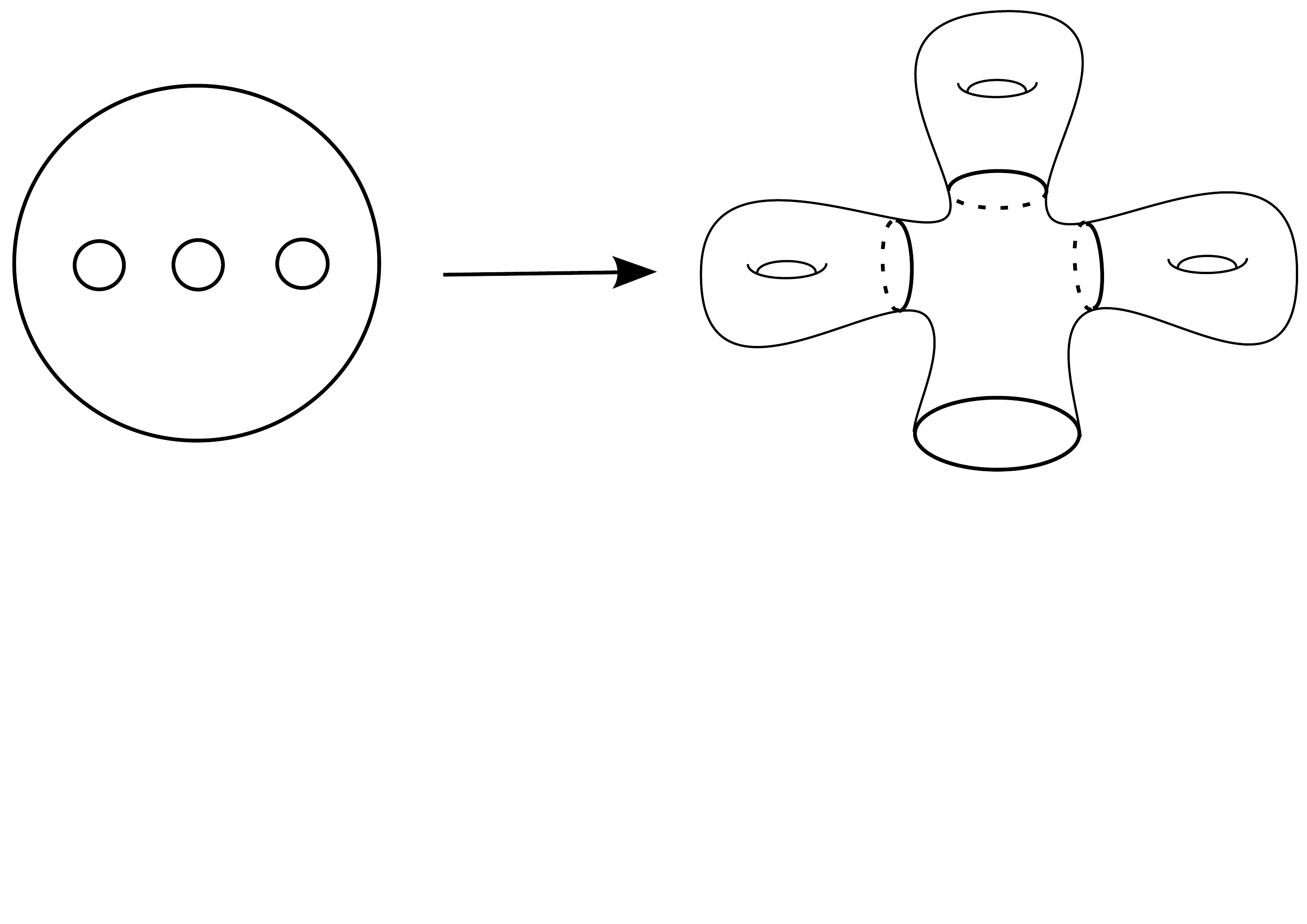}
\put(-297,182){\scriptsize $\dots$}
\put(-68,200){$\ddots$}
\vspace{-4cm}\caption[A separating embedding of $D$ in $S$.]{To obtain examples of $f' \in M_k(S)$ from $f \in J_k(D)$, we embed the disk, $D$ into $S$ such that each boundary component of $D$ is either separating in $S$ or is the boundary component of $S$.  The above illustrates a possible separating embedding of $D_g$ in $S_g$.}
\label{F:embedding}
\end{center}
\end{figure}

We first develop a relationship between the Johnson subgroups on a disk and the Magnus subgroups on a larger surface.  For this we will employ a specific basis for $F$ that is compatible with the arcs which generate $H_1(D, \bd D)$.   Let $*$ be a basepoint for $F=\pi_1(S)$ located on the boundary of $S$.  Let $A_i$ be arcs connecting the $i^{th}$ boundary component to $p_0$ as in \defn{multipleboundary}.  Let $p_i$ be the terminal point of $A_i$.  As the boundary components of $i(D)$ are separating in $S$, $S \setminus D$ is a disjoint union of at most $n+2$ surfaces, one of which is $i(D)$.  Let us denote the other surfaces $\Sigma_0, \dots, \Sigma_n$, with $\Sigma_0$ chosen such that $\Sigma_0$ contains the boundary component of $S$ (note that if $i$ maps a boundary component of $D$ to the boundary component of $S$, $\Sigma_0$ is empty).  Let $\Sigma_i$ have genus $g_i$.  Then $\pi_1(\Sigma_i, p_i)$ has a basis consisting of $2g_i$ loops (given the extra boundary component, $\Sigma_0$ will have a basis of $2g_0+1$ loops, but we will only consider the $2g_0$ loops which form a basis for the capped off surface).  By the Seifert Van Kampen theorem, we can combine these bases to form a basis for $F$ as follows:  Let $C$ be an arc joining $*$ to $p_0$.  The elements of our basis for $S$ are the homotopy classes of the loops $CA_i \beta \overline{A_i}\,\overline{C}$ (or $C\beta \overline{C}$ for $i=0$) where $\beta$ is a generator of $\pi_1(\Sigma_i, p_i)$.  This basis is illustrated in \figr{embeddingbasis} below.  We denote the elements of this basis $\{\alpha_1, \gamma_1, \dots, \alpha_g, \gamma_g\}$ where the curves $\alpha_{g_0+ \cdots+g_{i-1}+1}, \gamma_{g_0+ \cdots +g_{i-1}+1}, \dots, \alpha_{g_0+ \cdots + g_i}, \gamma_{g_0+ \cdots + g_i}$ are those basis elements produced by the generators of $\pi_1(\Sigma_i, p_i)$.

%\begin{figure}[h]
%\begin{center}
%\includegraphics[scale=.29]{}
%\put(-155,10){{\small $x$}}
%\put(-209,37){{\small $\alpha_1$}}
%\put(-190,57){{\small $\gamma_1$}}
%\put(-175,93){{\small $y$}}
%\put(-209,137){{\small $\alpha_2$}}
%\put(-175,143){{\small $\gamma_1$}}
%\put(-109,160){{\small $\alpha_3$}}
%\put(-109,110){{\small $z$}}
%\put(-55,140){{\small $\gamma_3$}}
%\caption{The set $\{\alpha_1, \gamma_1, \alpha_2, \gamma_2,\alpha_3, \gamma_3\}$ is a basis for $\pi_1(S)$.  The set $\{x,y,z\}$ is a basis for $\pi_1(D)$.}
%\label{F:basis}
%\end{center}
%\end{figure}

\begin{figure}[h] %  figure placement: here, top, bottom, or page
\centerline{\includegraphics[width=6.9in]{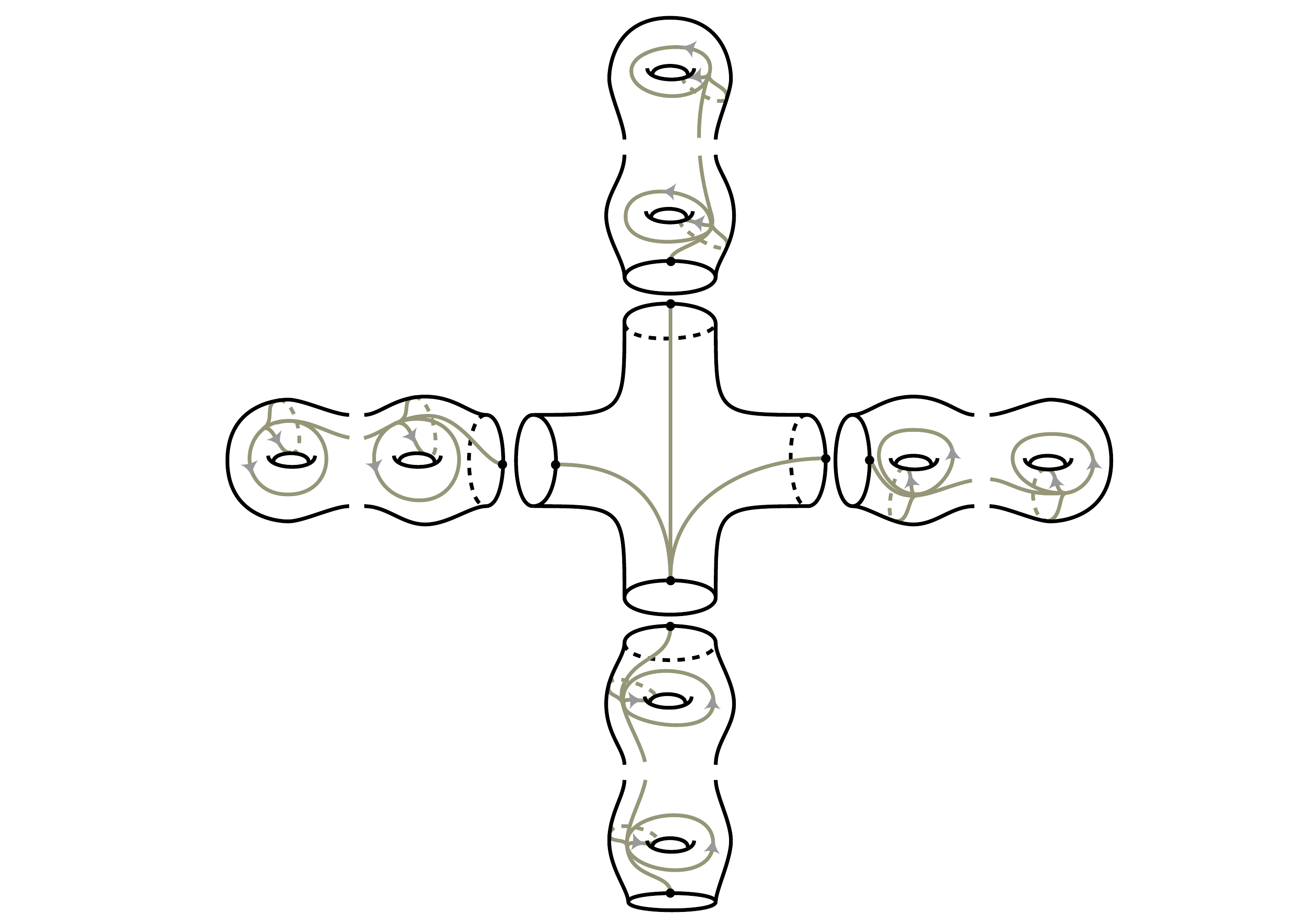} 
		\put(-242,13){$*$}
	\put(-243,105){$p_0$}
	\put(-243,123){$p_0$}
         	 	\put(-298,170){$p_1$}
         		\put(-318,170){$p_1$}
	\put(-255,245){$p_2$}
	\put(-255,228){$p_2$}
		\put(-178,180){$p_n$}
		\put(-196,181){$p_n$}
	\put(-243,55){$\vdots$}
	\put(-370,175){$\dots$}
	\put(-245,295){$\vdots$}
	\put(-130,175){$\dots$}
	\put(-210,210){$\ddots$}
		\put(-240,45){$\alpha_{g_0}$}
		\put(-240,68){$\alpha_{1}$}
	\put(-280,88){$\gamma_1$}
	\put(-282,32){$\gamma_{g_0}$}
		\put(-350,206){$\gamma_{g_0+1}$}
		\put(-400,206){$\gamma_{g_0+g_1}$}
	\put(-350,145){$\alpha_{g_0+1}$}
	\put(-400,145){$\alpha_{g_0+g_1}$}
		\put(-218, 258){$\gamma_{g_0+g_1+1}$}
		\put(-218,312){$\gamma_{g_0+g_1+g_2}$}
	\put(-315,270){$\alpha_{g_0+g_1+1}$}
	\put(-315,325){$\alpha_{g_0+g_1+g_2}$}
		\put(-175,145){$\gamma_{g_0+ \cdots+g_{n-1}+1}$}
		\put(-100,145){$\gamma_{g}$}
	\put(-175,205){$\alpha_{g_0+ \cdots+g_{n-1}+1}$}
	\put(-100,205){$\alpha_g$}
		\put(-240,190){$A_2$}
		\put(-280,177){$A_1$}
		\put(-220,177){$A_n$}
	\put(-252,55){$C$}}
   \caption[The chosen basis $\{\alpha_1,\gamma_1, \dots, \alpha_g, \gamma_g \}$ of $F$]{Pictured above is the chosen basis $\{\alpha_1,\gamma_1, \dots, \alpha_g, \gamma_g \}$ of $F$, obtained by connecting the bases for $\pi_1(\Sigma_i, p_i)$ to the basepoint $*$ via the arcs $A_i$.}
   \label{F:embeddingbasis}
\end{figure}

\begin{lemma}\label{L:inclusion}
Let $i:D \rightarrow S$ be a separating embedding.  Let $[f]\in Mod(D)$ and let $f$ be a representative homeomorphism of $[f]$.  Let $f':S \rightarrow S$ be the homeomorphism defined by
$$
f'(x)= \left\{ \begin{array}{ll} i(f(y)) & x=i(y)\\ x & x \in S \setminus i(D) \end{array} \right.
$$
then if $[f] \in J_k(D)$, $[f'] \in M_k(S)$.
\end{lemma}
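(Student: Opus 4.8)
The plan is to verify the two defining conditions for membership in $M_k(S) = J^{F'}_k(S) = \ker(Mod(S) \to \aut(F/F'_k))$. Concretely, I must show that for every $x \in F$, $f'_*(x)x^{-1} \in F'_k$. The key is to exploit the carefully chosen basis $\{\alpha_1, \gamma_1, \dots, \alpha_g, \gamma_g\}$ of $F$ pictured in \figr{embeddingbasis}, which is adapted to the separating embedding $i:D \to S$: each basis element is a loop of the form $CA_i\beta\overline{A_i}\,\overline{C}$ (or $C\beta\overline{C}$) where $\beta$ generates $\pi_1(\Sigma_j, p_j)$ for one of the complementary subsurfaces $\Sigma_j$. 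Since $f'$ acts by the identity outside $i(D)$, each such loop $\beta$ is fixed by $f'_*$; the only way $f'$ can move a basis element is via the arcs $A_i$ — but those arcs, combined with $C$, connect the basepoint $*$ to the boundary components $b_i$ of $i(D)$, which are precisely the arcs $A_i$ in the sense of \defn{multipleboundary} applied to $D$.

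First I would reduce the problem to understanding $f'_*(CA_i\overline{A_i}\,\overline{C})$ and $f'_*$ on each $\beta$. For the generators $\beta$ of $\pi_1(\Sigma_j, p_j)$ with $j \ge 1$: the loop $CA_j\beta\overline{A_j}\,\overline{C}$ is unchanged by $f'$ except possibly along the portion of $A_j$ lying inside $i(D)$; but $A_j$ terminates on $b_j = i(b_j) \subset \bd i(D)$, so the relevant datum is the arc $A_j$ in $D$ itself. Using the hypothesis $[f] \in J_k(D)$ and the generalized Johnson subgroup definition \defn{multipleboundary} for the surface $D$ (conditions (1) and (2)), I get that $f(A_j)\overline{A_j}$ — as a loop in $D$ — represents an element of $\pi_1(D)_k$, hence its image $i_*(f(A_j)\overline{A_j})$ lies in $i_*(\pi_1(D))_k \subset F_k$. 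Since the conjugating paths $C, \overline{C}$ and the unchanged portions cancel, I would conclude $f'_*(CA_j\beta\overline{A_j}\,\overline{C}) \cdot (CA_j\beta\overline{A_j}\,\overline{C})^{-1}$ lies in a conjugate of $F_k$, hence in $F_k$. For loops genuinely inside $i(D)$ I use condition (1) of \defn{multipleboundary}: $f_*(\gamma)\gamma^{-1} \in \pi_1(D)_k$, and again push forward by $i_*$.

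The crucial upgrade — and the step I expect to be the main obstacle — is passing from $F_k$ to $F'_k$. Membership in $F_k$ is weaker than membership in $F'_k$ (e.g. $F_2 = F'$ but $F'_2 = F'' \subsetneq F_2$). The point is that all the elements $f(A_j)\overline{A_j}$, $f(\gamma)\gamma^{-1}$ arising above are not just in $\pi_1(D)_k$ but lie in $\pi_1(D)' \cap \pi_1(D)_k$, because $[f] \in J_k(D) \subset J_2(D) = \mathcal{I}(D)$ acts trivially on $H_1(D)$ and on $H_1(D,\bd D)$, so $f$ fixes homology classes of loops and of the arcs $A_j$; thus $f(A_j)\overline{A_j}$ is nullhomologous, i.e. lies in $\pi_1(D)'$. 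Then I need the fact that $\pi_1(D)' \cap \pi_1(D)_k = \pi_1(D)'_k$ for a free group — equivalently, the lower central series of the free group $\pi_1(D)$ restricted to its commutator subgroup gives the lower central series of that commutator subgroup in the appropriate range — which follows from the basis-theorem machinery of \sec{basistheorems} (and the module/Tomaszewski-basis structure). Finally, since $i(D) \hookrightarrow S$ is $\pi_1$-injective (the boundary curves are separating), $i_*$ carries $\pi_1(D)'_k$ into $(i_*\pi_1(D))'_k \subset F'_k$, and conjugation by the fixed path $C$ preserves $F'_k$ because $F'_k$ is characteristic in $F$. Assembling these, $f'_*(x)x^{-1} \in F'_k$ for all $x$ in the chosen basis, hence for all $x \in F$, so $[f'] \in M_k(S)$.
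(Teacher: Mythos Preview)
Your overall strategy---work in the adapted basis of \figr{embeddingbasis}, track the arcs $A_j$, and use condition (2) of \defn{multipleboundary} to get $f(A_j)\overline{A_j}\in\pi_1(D)_k$---is exactly the paper's approach, and the computation reducing $f'_*(\alpha)\alpha^{-1}$ to a conjugate of $i_*(f(A_j)\overline{A_j})$ is right.

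The gap is in your ``crucial upgrade'' from $F_k$ to $F'_k$. Your proposed route is to first show $f(A_j)\overline{A_j}\in\pi_1(D)'_k$ and then push forward, invoking the identity $\pi_1(D)'\cap\pi_1(D)_k=\pi_1(D)'_k$. That identity is false. Write $G=\pi_1(D)$; for any $k\ge 2$ one has $G_k\subset G_2=G'$, so $G'\cap G_k=G_k$, whereas $G'_k$ is the $k$-th lower central term of the (infinitely generated) free group $G'$ and is strictly smaller. Already for $k=2$ your claim would force $G'=G''$. In particular there is no reason to expect $f(A_j)\overline{A_j}\in G'_k$, and for generic $f\in J_k(D)$ it will not lie there.

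The correct observation---and it is where the separating hypothesis is actually used---is that the embedding, not the mapping class, does the work. Each interior boundary curve of $i(D)$ is separating in $S$, hence bounds a subsurface and so represents an element of $F'$. Since these curves generate $\pi_1(D)$, one has $i_*(\pi_1(D))\subset F'$. It then follows purely formally that $i_*(\pi_1(D)_k)\subset\bigl(i_*(\pi_1(D))\bigr)_k\subset (F')_k=F'_k$. With this in hand your computation finishes immediately: $i_*(f(A_j)\overline{A_j})\in F'_k$, and modulo $F'_k$ the expression for $f'_*(\alpha)\alpha^{-1}$ collapses to the identity. No appeal to the Tomaszewski basis or to any intersection of series inside $\pi_1(D)$ is needed.
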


\begin{proof}
Choose an ordering of the boundary components of $D$, points $p_i$ on these boundary components and arcs $A_i$ as in \defn{multipleboundary} such that the boundary component of $S$ is contained in the component of $S \setminus \text{int}\, i(D)$ containing the $0^{th}$ boundary component of $D$.  Let $*$ be a basepoint for $\pi_1(S)$ which lies on $\partial S$ and let $c$ be an arc parametrized on $[0,1]$ such that $c(0)=*$ and $c(1)=p_0 \in \partial D$.  By construction of our basis for $\pi_1(S, *)$ in which each generator can be represented by a loop $\alpha$ which is either disjoint from $i(D)$, or is of the form $\alpha= C A_i \beta \overline{A_i}\,\overline{C}$ with $\beta$ a loop intersecting $i(D)$ only at its initial and terminal points.  

For $\alpha$ disjoint from $i(D)$, $f'_*(\alpha)=\alpha$ and thus $f'_*(\alpha) \alpha^{-1}=1$ is trivially contained in $F_k$.

For $\alpha=C A_i \beta \overline{A_i}\,\overline{C}$ we can perform the following computation.

\begin{align*}
f'(\alpha)\overline{\alpha}&\simeq f'(C A_i \beta \overline{A_i}\,\overline{C})\overline{(C A_i \beta \overline{A_i}\,\overline{C})}\\
&\simeq f'(C) f'(A_i) f'(\beta) f'(\overline{A_i}) f'(\overline{C}) C A_i \overline{\beta} \, \overline{A_i}\,\overline{C}\\
&\simeq C\, i(f(A_i)) \beta \, i(f(\overline{A_i})) \overline{C} C A_i\overline{\beta} \, \overline{A_i}\,\overline{C}\\
&\simeq \left(C\, i(f(A_i))\overline{A_i}\,\overline{C}\right) \left(C A_i \beta \overline{A_i}\,\overline{C}\right) \left(C A_i i(f(\overline{A_i})) \overline{C}\right) \left( C \overline{\beta} \, \overline{A_i}\,\overline{C}\right) \\
&\simeq i_*\left(f(A_i)\overline{A_i}\right) \left(C A_i \beta \overline{A_i}\,\overline{C}\right) i_*\left( A_i f(\overline{A_i})\right) \left( C \overline{\beta} \, \overline{A_i}\,\overline{C}\right) 
\end{align*}
Note that $\left(f(A_i)\overline{A_i}\right)^{-1}=A_i f(\overline{A_i})$.  As $f \in J_k(D)$, both $A_i f(\overline{A_i})$ and $f({A_i})\overline{A_i}$ are contained in $\pi_1(D)_k$.  Each boundary curve of $i(D)$ is the boundary of a subsurface of $S$ and hence is contained in $[F,F]$.  Since  $\pi_1(D)$ is generated by the boundary curves of $D$, it follows that $i_*(\pi_1(D)) \subset F'$, and hence $i_*(\pi_1(D)_k) \subset F'_k$.  Hence both $i_*(A_i f(\overline{A_i}))$ and $i_*(f({A_i})\overline{A_i})$ are contained in $F'_k$.  Considering the above expression modulo $F'_k$ we then achieve the following.

\begin{align*}
f'(\alpha)\alpha^{-1}&=CA_i \beta \overline{A_i}\,\overline{C} CA_i \overline{\beta} \overline{A_i} \, \overline{C}&\mod F'_k\\
&=\alpha \alpha^{-1}=1 & \mod F'_{k}
\end{align*}
Therefore $f' \in M_k(S)$.

\end{proof}

\lem{inclusion} allows us to construct numerous examples of elements of $M_k(S)$ by extending homeomorphisms in $J_k$ of embedded disks.

\begin{proposition}\label{P:homomorphism}
Let $i:D \rightarrow S$ be a separating embedding.  The map $i':Mod(D) \rightarrow Mod(S)$ given by $i'([f])=[f']$ is an injective homomorphism.  This map induces a homomorphism $$\overline{i}:\frac{J_k(D)}{J_{k+1}(D)} \rightarrow \frac{M_k(S)}{M_{k+1}(S)}.$$
\end{proposition}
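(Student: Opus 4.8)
The plan is to verify the three assertions in turn: that $i'$ is a homomorphism, that it is injective, and that it descends to the claimed map on quotients. The homomorphism property is immediate from the construction: given $[f],[g]\in Mod(D)$, the homeomorphisms $(fg)'$ and $f'g'$ agree on $S\setminus i(D)$ (both act as the identity) and agree on $i(D)$ (both act as $i\circ(fg)\circ i^{-1}=i\circ f\circ i^{-1}\circ i\circ g\circ i^{-1}$), so $i'([f][g])=i'([f])i'([g])$; well-definedness on isotopy classes follows because an isotopy of $f$ supported in $D$ extends by the identity to an isotopy of $f'$ supported in $i(D)$.

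For injectivity, suppose $[f']=1$ in $Mod(S)$, i.e.\ $f'$ is isotopic rel $\partial S$ to the identity. The curves $i(b_1),\dots,i(b_n)$ together with $i(b_0)$ cut out $i(D)$ as a subsurface, and $f'$ fixes a collar of each of these curves. One approach is to observe that $f'$ restricted to $i(D)$ is $i\circ f\circ i^{-1}$, and since $f'$ is trivial in $Mod(S)$, the change-of-coordinates / subsurface inclusion results (e.g.\ the injectivity of the map $Mod(i(D))\to Mod(S)$ induced by inclusion of an essential subsurface whose complementary components are not disks, using that $D$ has $n\ge 3$ boundary components so no complementary annulus issue causes trouble) force $f$ to be isotopic to the identity in $Mod(D)$. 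Alternatively, one can argue on the level of $\pi_1$: the loops $CA_i\beta\overline{A_i}\,\overline C$ from the chosen basis in \figr{embeddingbasis} realize $\pi_1(\Sigma_i)$, and the action of $f'_*$ on these loops records the action of $f_*$ on $\pi_1(D)$ faithfully, so $f'_*=\mathrm{id}$ on $F$ implies $f_*=\mathrm{id}$ on $\pi_1(D)$; combined with the fact that $Mod(D)$ injects into $\aut(\pi_1(D))$ (true for surfaces with boundary), this gives $[f]=1$. I expect the subsurface-injectivity argument to be the main obstacle, since one must be careful that none of the complementary pieces $\Sigma_i$ is a disk (guaranteed by $i(b_i)$ being essential/separating, but a separating curve bounding a disk would be inessential — so implicitly the embedding is required to have all $i(b_i)$ essential, which should be stated or is part of the standing setup).

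For the induced map on quotients: by \lem{inclusion}, $i'(J_k(D))\subset M_k(S)$ and $i'(J_{k+1}(D))\subset M_{k+1}(S)$. Since $i'$ is a homomorphism carrying $J_k(D)$ into $M_k(S)$ and $J_{k+1}(D)$ into $M_{k+1}(S)$, it induces a well-defined homomorphism on the quotients
$$
\overline{i}:\frac{J_k(D)}{J_{k+1}(D)}\longrightarrow \frac{M_k(S)}{M_{k+1}(S)},\qquad \overline{i}\big([f]J_{k+1}(D)\big)=[f']M_{k+1}(S),
$$
by the universal property of quotient groups (the composite $J_k(D)\xrightarrow{i'}M_k(S)\to M_k(S)/M_{k+1}(S)$ kills $J_{k+1}(D)$). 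This last step is routine; the real content of the proposition is the homomorphism and injectivity claims together with the containment supplied by \lem{inclusion}.
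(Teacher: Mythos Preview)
Your proposal is correct and matches the paper's proof in all three parts: the homomorphism check is identical, the quotient map is obtained from \lem{inclusion} exactly as you say, and for injectivity the paper invokes precisely the subsurface-inclusion result you describe (citing \cite{FM} Theorem 3.18, after noting that no boundary component of $i(D)$ is nullhomotopic in $S$). Your caution about essentiality of the $i(b_i)$ is well placed; the paper handles it implicitly by the phrase ``no boundary component of $D$ is nullhomotopic in $S$,'' which is tacit in the standing definition of a separating embedding.
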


\begin{proof}
We begin by showing that $i'$ is multiplicative.  Consider maps \linebreak $[f_1], [f_2] \in Mod(D)$ and let $f_1,f_2$ be corresponding homeomorphisms.  Clearly as elements of $Mod(D)$, $[f_1][f_2]=[f_1f_2]$.  The composition $f_1 f_2$ is a representative of the class $[f_1f_2]$.  We then have:
\begin{align*}
i'\left([f_1][f_2]\right)&=i' \left(\left[f_1f_2\right]\right)\\
&=\left[(f_1f_2)'\right].\\
\end{align*}
Note that by definition $(f_1f_2)'$ is the homeomorphism $S \rightarrow S$ which extends $f_1f_2$ by the identity.  We then have that $(f_1f_2)'=f_1'f_2'$.  By definition of multiplication in $Mod(S)$, $\left[f_1'f_2'\right]=[f_1'][f_2']$.  Thus,
\begin{align*}
i'\left([f_1][f_2]\right)&=\left[(f_1f_2)'\right]\\
&=\left[f_1'f_2'\right]\\
&=[f_1'][f_2']\\
&=i'\left([f_1]\right)i'\left([f_2]\right)
\end{align*}
Thus $i'$ is multiplicative.

To show that $i'$ is injective, it then suffices to show that $\ker{i'}=1$.  This amounts to showing that beginning with a nontrivial mapping class \linebreak $f \in M_k(D)$, the resulting mapping class $f' \in M_k(S)$ is necessarily nontrivial.  As no boundary component of $D$ is nullhomotopic in $S$, this follows directly from \cite{FM} Theorem 3.18.  Hence $i'$ is a monomorphism.

We now address the second part of the proposition: that $i'$ induces a homomorphism $\overline{i}:\frac{J_k(D)}{J_{k+1}(D)} \rightarrow \frac{M_k(S)}{M_{k+1}(S)}$.  By \lem{inclusion} $i'(J_{k+1}(D)) \subset M_{k+1}(S)$.  Thus the map $\overline{i}$ is well defined.  It is clearly a homomorphism as $i'$ is a homomorphism.  This completes the proof.
\end{proof}

\subsection{Magnus homomorphism computations}\label{S:maghomom}

Having developed a relationship between Johnson subgroups on $D$ and Magnus subgroups on $S$, we now seek to relate the Johnson homomorphisms on $D$ to the Magnus homomorphisms on $S$.  To do this we must first examine the relationship between the lower central series quotients of $\pi_1(D)$ and $F'$.

 Let $G$ denote the fundamental group of $D$, the disk with $n$ holes, and let $y_i$ be the generators of $G$ obtained by traveling along arc $A_i$, circling the corresponding boundary component in a counterclockwise direction, and returning to the basepoint along $\overline{A_i}$ as shown in \figr{arcs}.  
\begin{figure}[h] %  figure placement: here, top, bottom, or page
   \centering
   \includegraphics[width=3in]{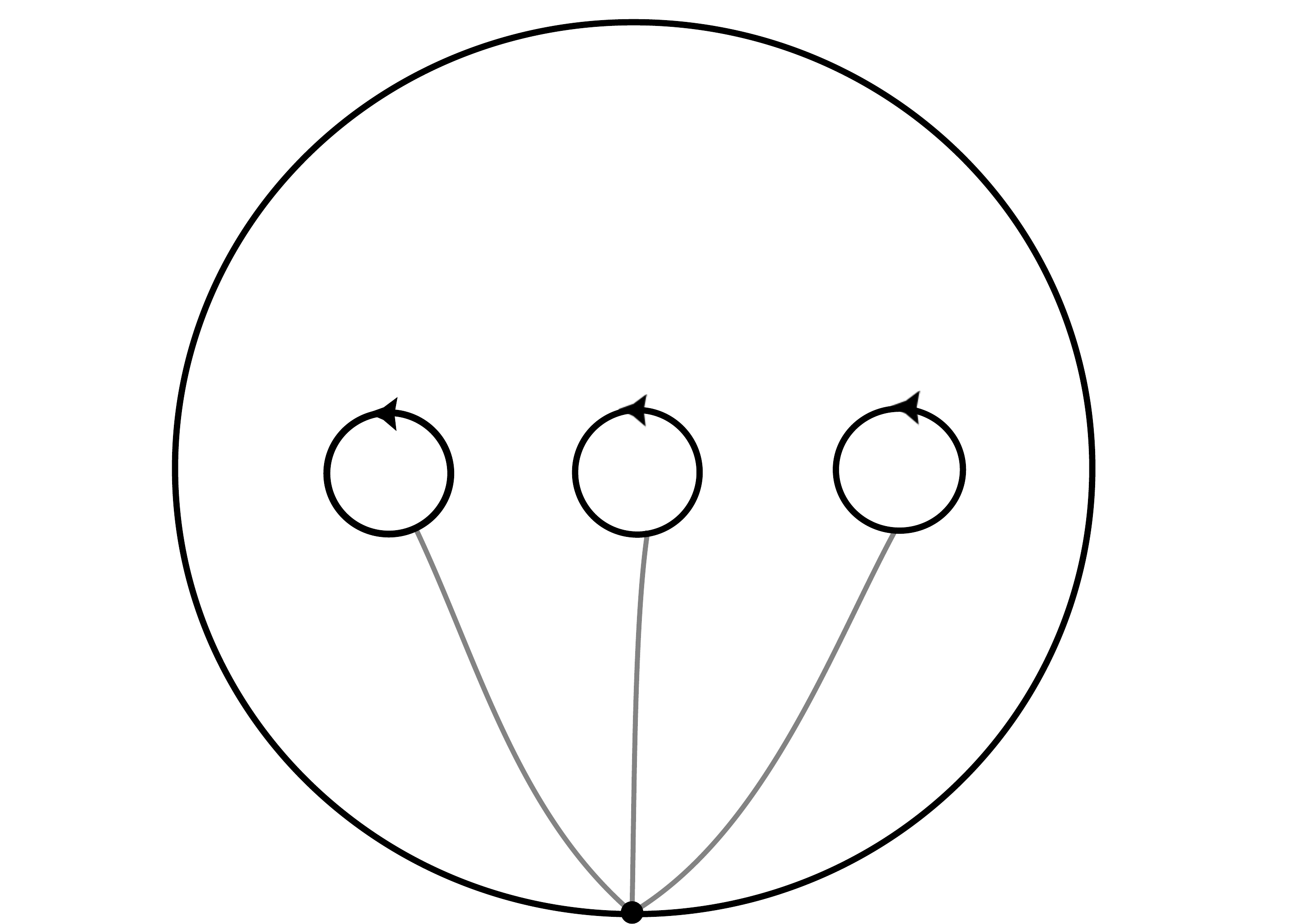} 
   \put(-80,38){$A_n$}
      \put(-75,90){$y_n$}
         \put(-95,75){$\dots$}
         \put(-110,38){$A_2$}
            \put(-150,38){$A_1$}
               \put(-160,90){$y_1$}
                  \put(-120,90){$y_2$}
   \caption[The generators $y_i$ of $G=\pi_1(D)$.]{Pictured above are the generators $y_i$ of $G$.  A generator $y_i$ is obtained by traveling along arc $A_i$, circling the corresponding boundary component in a counterclockwise direction, and returning to the basepoint along $\overline{A_i}$.}
   \label{F:arcs}
\end{figure}

\begin{lemma}\label{L:braidsinject}
Let $i:D \rightarrow S$ be a separating embedding.  The induced map  $i_*: \frac{G_k}{G_{k+1}} \rightarrow \frac{F'_k}{F'_{k+1}}$ is injective.
\end{lemma}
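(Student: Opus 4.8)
The plan is to realize $i_*: G_k/G_{k+1} \to F'_k/F'_{k+1}$ as a composition of maps already shown to be injective, by inserting an intermediate free group that isolates the ``algebraic'' content of the separating embedding. Recall from the construction of the basis for $F$ in Figure~\ref{F:embeddingbasis} that $\pi_1(D)$ maps into $F'$ with each generator $y_i$ sent to a loop surrounding a subsurface $\Sigma_i$, so that $i_*(y_i)$ is a product of commutators of the $\alpha_j, \gamma_j$ coming from that subsurface. First I would recall \lem{inclusion}'s observation that $i_*(\pi_1(D)) \subset F'$, so the map lands in $F'$ as claimed, and hence induces $i_*: G_k/G_{k+1} \to F'_k/F'_{k+1}$ (using that $i_*$ of a weight-$k$ commutator in $G$ is a weight-$k$ commutator in $F'$, since $G_k$ is generated by $k$-bracketings of the $y_i$).

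The key step is to apply \prop{stallings} (in its free-group form noted in the Remark after it, namely that it yields an injection $A/A_n \hookrightarrow B/B_n$ on ordinary lower central series quotients). For this I would verify the two hypotheses for the homomorphism $i_*: G \to F'$: that $H_2(G;\Q) = H_2(F';\Q) = 0$, and that $i_*$ induces an \emph{injection} $H_1(G;\Q) \hookrightarrow H_1(F';\Q)$. The vanishing of $H_2$ holds because $D$ is a surface with boundary, hence homotopy equivalent to a wedge of circles, so $G$ is free; and $F'$ is a subgroup of a free group, hence free, so $H_2(F';\Q)=0$ as well. The injectivity on $H_1(-;\Q)$ is the crux: I would check it by exhibiting, for each generator $y_i$ of $G$, that its image $i_*(y_i) \in F'$ is nonzero in $H_1(F';\Q) = F'/F''\otimes\Q$ and that the images of distinct $y_i$ are linearly independent there. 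Concretely, $i_*(y_i)$ is the boundary of $\Sigma_i$ (pushed to the basepoint along $A_i$), which in $F'/F''$ equals $\prod_{j}[\alpha_j,\gamma_j]$ over the handles of $\Sigma_i$ up to the appropriate conjugation/basepoint corrections; since the handle pairs $(\alpha_j,\gamma_j)$ belonging to different $\Sigma_i$ are disjoint sub-collections of the basis of $F$, the classes $[i_*(y_i)]$ involve disjoint sets of basic commutators of weight $2$ in $F'/F''$ and are therefore $\Q$-linearly independent by the basis theorem (\thm{basis}, applied to $F'$ via Tomaszewski's basis or directly). This gives the injection $G/G_n \hookrightarrow F'/F'_n$ for all $n$, and by the standard argument (comparing the unique basic-commutator expansions, exactly as in \cor{basistheorem} and \prop{splitting}) one upgrades an injection on $G/G_n$ for all $n$ to an injection on each successive quotient $G_k/G_{k+1} \hookrightarrow F'_k/F'_{k+1}$.

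I expect the main obstacle to be the careful bookkeeping in the $H_1$-injectivity step: one must be precise about what $i_*(y_i)$ is in $F'/F''$, accounting for the conjugating arcs $C A_i(-)\overline{A_i}\,\overline{C}$ and for the fact that $\pi_1(\Sigma_i)$ has a surface relation, so that $i_*(y_i)$ is genuinely the product-of-commutators word $\prod[\alpha,\gamma]$ rather than something that could degenerate. Once one sees that the contributions from different $\Sigma_i$ live in complementary coordinate subspaces of $F'/F''\otimes\Q$ (indexed by disjoint handle-pairs), linear independence is immediate and the rest is a routine application of the machinery already assembled in Sections~2 and~4. A minor point to address cleanly is the reduction from ``injective on $G/G_n$ for all $n$'' to ``injective on graded pieces'': this follows because the filtration $\{G_n\}$ is separated and $i_*$ is filtration-preserving, so an element of $G_k\setminus G_{k+1}$ mapping into $F'_{k+1}$ would already map to $0$ in $F'/F'_{k+1}$ while being nonzero in $G/G_{k+1}$, contradicting injectivity at level $n=k+1$.
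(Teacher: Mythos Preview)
Your proposal is correct and follows essentially the same route as the paper: both apply \prop{stallings} to the map $i_*:G\to F'$ after verifying that $H_2$ vanishes (both groups are free) and that $i_*$ is injective on $H_1(-;\Q)$, then pass from injectivity of $G/G_n\hookrightarrow F'/F'_n$ to injectivity on graded pieces. The only real difference is in how the $H_1$-injectivity is checked: the paper writes $i_*(y_i)=\prod_j[\alpha_j,\gamma_j]$ over the handles of $\Sigma_i$ and then, to detect a nontrivial $\sum n_i y_i$, applies a collapsing map $g_j:S\to T$ that kills all subsurfaces except $\Sigma_j$; you instead observe directly that the handle pairs for different $\Sigma_i$ are disjoint, so the $i_*(y_i)$ involve disjoint Tomaszewski basis elements of $F'/F''$ and are therefore independent. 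Your argument is if anything cleaner and avoids the auxiliary topological map. You also spell out the passage to graded pieces, which the paper leaves implicit.
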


\begin{proof}
To show that the above map is an injection, we will employ \prop{stallings}.   Hence we must show that the homomorphism $i_*:G \rightarrow F'$ given by $y_i \mapsto [\alpha_i, \gamma_i]$ induces an injection $H_1(G;\mathbb{Q}) \rightarrow H_1(F'; \mathbb{Q})$.  As $G/G'$ and $F'/F''$ are torsion free abelian groups, it suffices to show there is an injection $H_1(G;\mathbb{Z}) \rightarrow H_1(F'; \mathbb{Z})$.  Note that by our previous construction of the basis for $F'$, 
$$
i_*(y_i)= \left[\alpha_{g_0+ \cdots+g_{i-1}+1}, \gamma_{g_0+ \cdots +g_{i-1}+1}\right] \cdots \left[\alpha_{g_0+ \cdots + g_i}, \gamma_{g_0+ \cdots + g_i}\right].
$$
Consider an element $\sum n_i y_i$ which is nonzero in $G/G'$.  We compute the image of this element by $i_*$ as follows:
\begin{align*}
i_*\left(\sum n_i y_i\right)&=\sum n_i i_*(y_i)\\
&= \sum n_i \left[\alpha_{g_0+ \cdots+g_{i-1}+1}, \gamma_{g_0+ \cdots +g_{i-1}+1}\right] \cdots \left[\alpha_{g_0+ \cdots + g_i}, \gamma_{g_0+ \cdots + g_i}\right]
\end{align*}
Because $\sum n_i y_i \ne 0$, $n_j \ne 0$ for some $j$.  Consider the map $g_j$ which maps $S$ to the punctured surface as shown in \figr{torus}.  

\begin{figure}[h] %  figure placement: here, top, bottom, or page
   \centering
   \includegraphics[width=3in]{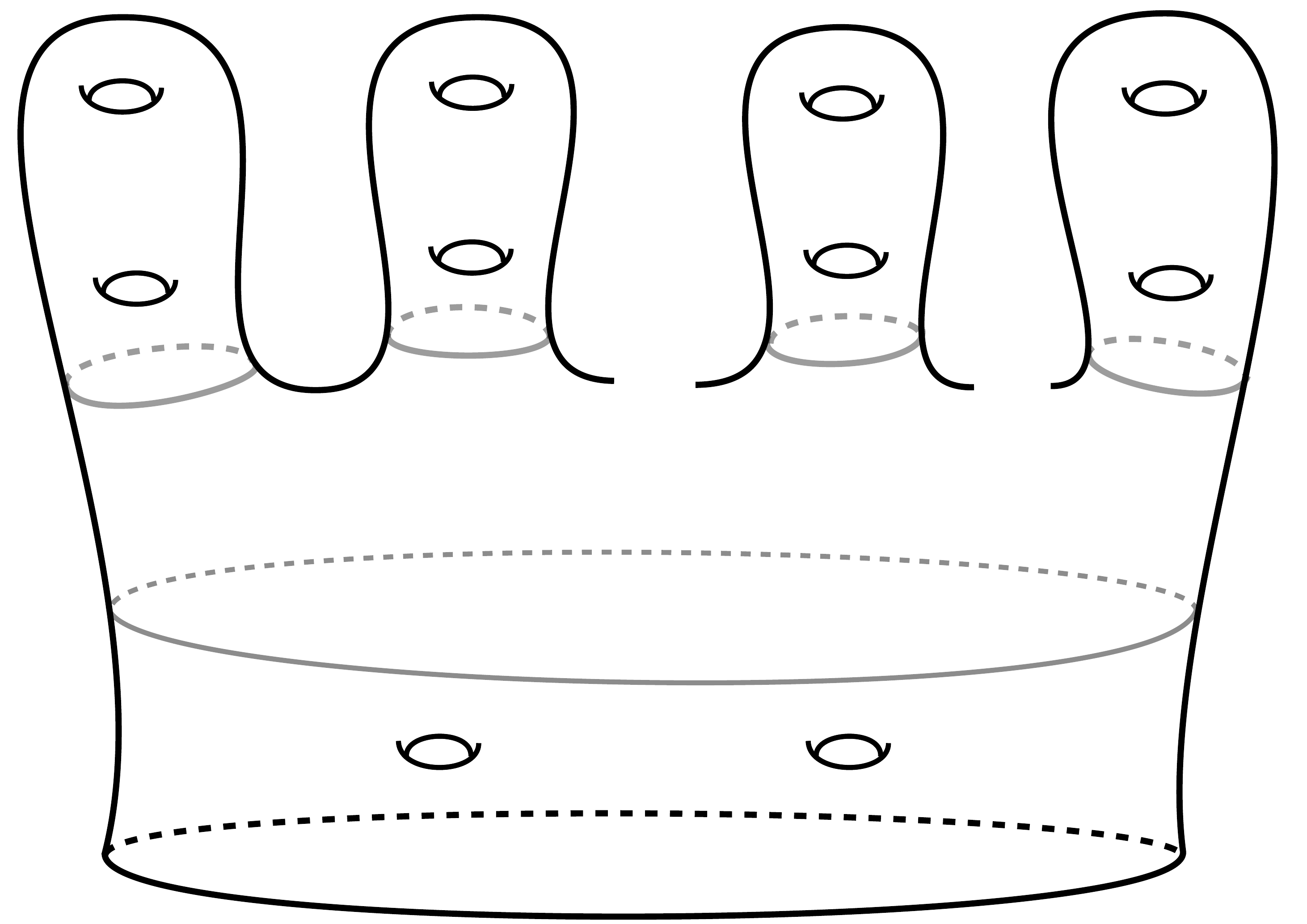}
   \put(-38,77){$i(b_n)$}
\put(-117,48){$i(b_0)$}
   \put(-197,75){$i(b_1)$}
   \put(-149,82){$i(b_2)$}
   \put(-87,82){$i(b_j)$}
    \put(-115,105){$\dots$}
    \put(-55,105){$\dots$}
\put(-197,117){$\vdots$}
\put(-140,119){$\vdots$}
\put(-78,118){$\vdots$}
\put(-24,117){$\vdots$}
\put(-117,27){$\dots$}

\vspace{.5cm}
 
 \includegraphics[width=.1in]{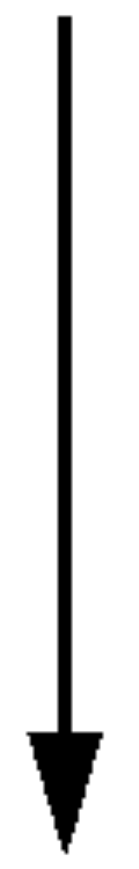}
   \put(0,25){$g_j$}
   
   \vspace{.5cm}
   \includegraphics[width=3in]{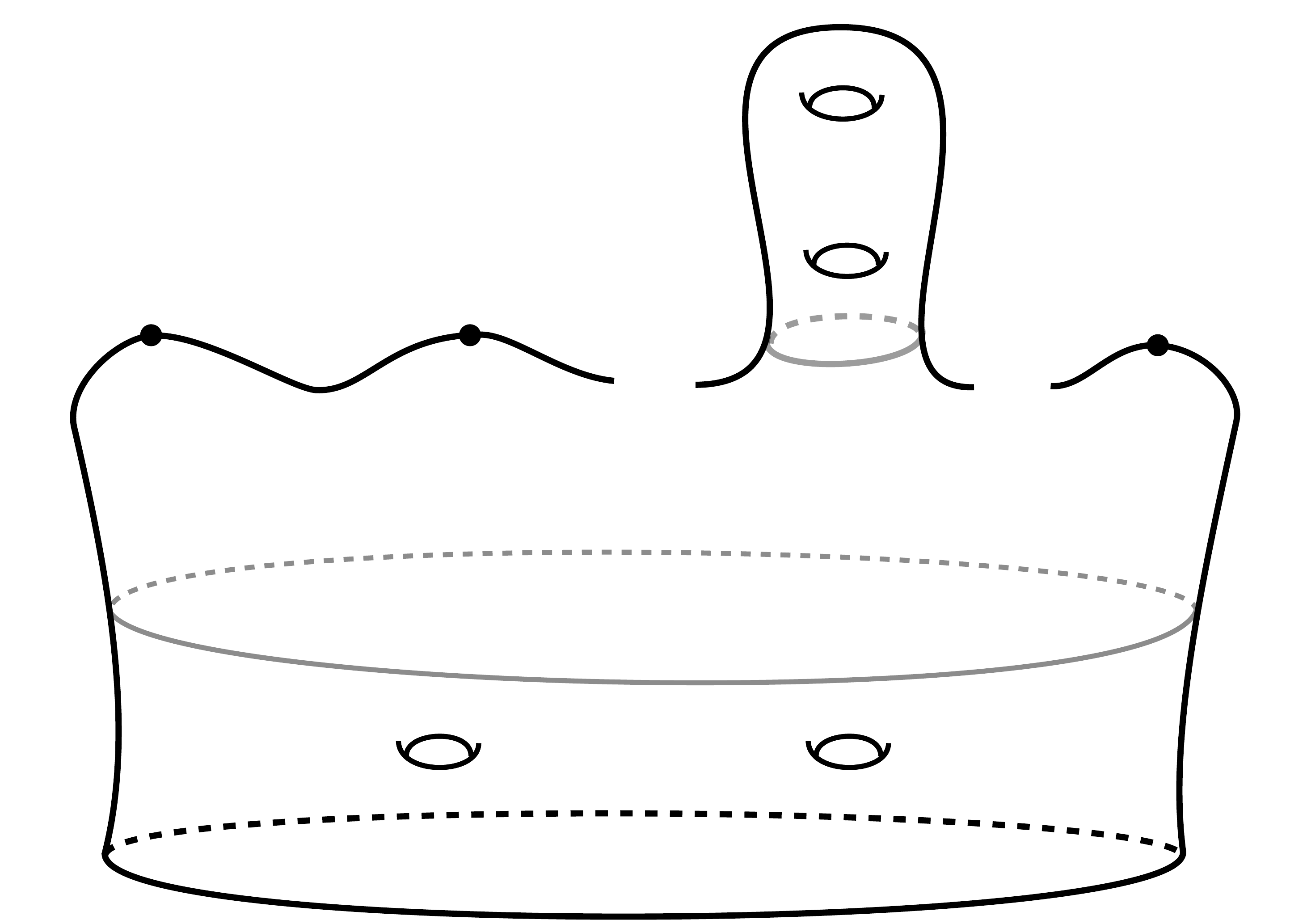} 
                  \put(-200,103){$p_1$}
                  \put(-142,103){$p_2$}
                  \put(-30,102){$p_n$}
                  \put(-115,95){$\dots$}
\put(-78,118){$\vdots$}
    \put(-55,95){$\dots$}
\put(-117,27){$\dots$}
   \caption[The continuous map $g_j:S \rightarrow T$]{Pictured above is the continuous map $g_j:S \rightarrow T$.  Everything above and including the curve $i(b_i)$ for $i \ne j$ is collapsed to a point $p_i$.}
   \label{F:torus}
\end{figure}

We find that 
\begin{align*}
g_{j*}i_*&\left(\sum n_i y_i\right)=\sum n_i i_*(y_i)\\
&= g_{j*}\left(\sum n_i \left[\alpha_{g_0+ \cdots+g_{i-1}+1}, \gamma_{g_0+ \cdots +g_{i-1}+1}\right] \cdots \left[\alpha_{g_0+ \cdots + g_i}, \gamma_{g_0+ \cdots + g_i}\right]\right)\\
&=\sum n_i g_{j*}\left( \left[\alpha_{g_0+ \cdots+g_{i-1}+1}, \gamma_{g_0+ \cdots +g_{i-1}+1}\right] \cdots \left[\alpha_{g_0+ \cdots + g_i}, \gamma_{g_0+ \cdots + g_i}\right]\right)\\
&=n_j \left[g_{j*}(\alpha_{g_0+ \cdots+g_{j-1}+1}), g_{j*}(\gamma_{g_0+ \cdots +g_{j-1}+1})\right]\\
& \hspace{5.7cm}\cdots \left[g_{j*}(\alpha_{g_0+ \cdots + g_j}), g_{j*}(\gamma_{g_0+ \cdots + g_j})\right].
\end{align*}
Clearly the resulting term
$$
n_j \left[g_{j*}(\alpha_{g_0+ \cdots+g_{j-1}+1}), g_{j*}(\gamma_{g_0+ \cdots +g_{j-1}+1})\right] \cdots \left[g_{j*}(\alpha_{g_0+ \cdots + g_j}), g_{j*}(\gamma_{g_0+ \cdots + g_j})\right]
$$
is nonzero in $H_1(T)$.  Hence $ i_*(\sum n_i y_i) \ne 0$.

\end{proof}

 Let $f':S \rightarrow S$ be constructed by taking a map $f$ in $J_k(D)$ and extending it to the whole surface by the identity, as in \lem{inclusion}. We relate the $\tau_k(f)$ and $\tau'_k(f')$ in the following lemma.

\begin{lemma}\label{L:inclusionnonzero}
Let $S$ be a surface with genus $g \ge 2$ and 1 boundary component.  Let $D$ be a sphere with $n$ disks removed, $n \ge 3$.  Let $i:D \rightarrow S$ be a separating embedding.  Let $f \in J_k(D)$ with $\tau_k(f)(A_i)=w_i\in \pi_1(D)_k/\pi_1(D)_{k+1}$ and let $f'=i'(f)$ be the element of $Mod(S)$ given by \lem{inclusion}.  Let $\gamma_i$ and $\gamma_j$ be elements of $F$ that intersect $i(D)$ along the arcs $A_i$ and $A_j$ respectively.  Then  $\tau'_k(f')[\gamma_i, \gamma_j]=(1-\gamma_i)(1- \gamma_j)\left(i_*(w_i)-i_*(w_j)\right)$.  Furthermore, if $w_i \ne w_j$ as elements of $\pi_1(D)_k/\pi_1(D)_{k+1}$ for some choice of $i,j$ then $\tau'_k(f') \ne 0$.
\end{lemma}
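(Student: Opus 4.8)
The plan is to compute $\tau'_k(f')$ directly on the commutator $[\gamma_i,\gamma_j]$ using the definition of the higher-order Magnus homomorphism $\tau'_k(f')([x]) = [f'_*(x)x^{-1}]$, exploiting the explicit form of the basis $\{\alpha_1,\gamma_1,\dots,\alpha_g,\gamma_g\}$ for $F$ constructed before \lem{inclusion}. Recall that in that basis each $\gamma_i$ is represented by a loop of the form $C A_i \beta \overline{A_i}\,\overline{C}$, where $\beta$ is a loop in some component $\Sigma_\ell$ based at $p_\ell$, and $f'$ acts by the identity outside $i(D)$ and by $f$ on $i(D)$. First I would record, exactly as in the proof of \lem{inclusion}, the identity
$$
f'_*(\gamma_i)\gamma_i^{-1} = i_*\!\left(f(A_i)\overline{A_i}\right) \cdot \gamma_i \cdot i_*\!\left(A_i f(\overline{A_i})\right) \cdot \gamma_i^{-1},
$$
and abbreviate $u_i := i_*(f(A_i)\overline{A_i}) = i_*(w_i) \in F'_k$ (using that $\tau_k(f)(A_i) = w_i$ and $i_*(\pi_1(D)_k)\subset F'_k$ by \lem{braidsinject}-style reasoning). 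Note $i_*(A_i f(\overline{A_i})) = u_i^{-1}$. So $f'_*(\gamma_i)\gamma_i^{-1} = u_i\,\gamma_i u_i^{-1}\gamma_i^{-1} = u_i\cdot{}^{\gamma_i}(u_i^{-1})$, which in the module $F'_k/F'_{k+1}$ (written additively, with the $\Z[F/F']$-action by conjugation) reads $(1-\gamma_i)\cdot u_i$. This is precisely $\tau'_k(f')([\gamma_i]) = (1-\gamma_i)\,i_*(w_i)$ — the single-generator computation.

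Next I would feed this into the commutator. Since $\tau'_k(f')$ is a $\Z[F/F']$-module homomorphism on $F'/F''$ by \thm{welldefined}, and $[\gamma_i,\gamma_j] \in F'$ represents the element $(1-\gamma_j)[\gamma_i] + \cdots$ of $F'/F''$ — more precisely, one checks the standard identity that in $F'/F''$ the class of $[\gamma_i,\gamma_j]$ equals $(1-\gamma_i)[\gamma_j]^{\!*} - (1-\gamma_j)[\gamma_i]^{\!*}$ type expression, or one computes $\tau'_k(f')([\gamma_i,\gamma_j])$ from scratch via $\tau'_k(f')(ab) = \tau'_k(f')(a) + {}^{a}\tau'_k(f')(b)$ — one obtains, after collecting terms and reducing mod $F'_{k+1}$,
$$
\tau'_k(f')([\gamma_i,\gamma_j]) = (1-\gamma_j)\big((1-\gamma_i)i_*(w_i)\big) - (1-\gamma_i)\big((1-\gamma_j)i_*(w_j)\big) = (1-\gamma_i)(1-\gamma_j)\big(i_*(w_i) - i_*(w_j)\big),
$$
using commutativity of $1-\gamma_i$ and $1-\gamma_j$ as operators (both come from $F/F'$, an abelian group). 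This is the asserted formula. The bookkeeping here — making sure that the cross terms involving $f'_*$ applied to the $\Sigma_\ell$-part $\beta$ vanish (they do: $f'$ is the identity there), and that all the correction terms land in $F'_{k+1}$ — is routine but must be done carefully; this is where I expect the main friction, since one is manipulating words in a free group and repeatedly invoking \cor{basistheorem}/\lem{homologybasistheorem} to justify passing to the quotient.

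Finally, for the "furthermore" clause: suppose $w_i \ne w_j$ in $\pi_1(D)_k/\pi_1(D)_{k+1}$ for some $i,j$. By \lem{braidsinject} the map $i_*\colon G_k/G_{k+1} \to F'_k/F'_{k+1}$ is injective, so $i_*(w_i) \ne i_*(w_j)$, i.e. $i_*(w_i) - i_*(w_j) \ne 0$ in $F'_k/F'_{k+1}$. By \lem{torsion} and the remark following it, the operator $\cdot(1-\gamma_i)(1-\gamma_j)$ is injective on $F'_k/F'_{k+1}$ (here I use that $\gamma_i,\gamma_j$ are distinct generators of the free group $F$, so after reordering they play the roles of $x_1,x_2$). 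Hence $(1-\gamma_i)(1-\gamma_j)(i_*(w_i)-i_*(w_j)) \ne 0$, so $\tau'_k(f')([\gamma_i,\gamma_j]) \ne 0$, and therefore $\tau'_k(f') \ne 0$. The only subtlety to confirm is that $\gamma_i$ and $\gamma_j$ are indeed honest (distinct) members of the chosen free basis for $F$ so that \lem{torsion} applies verbatim; this is guaranteed by the construction of the basis in \figr{embeddingbasis}, since the arcs $A_i$ are distinct and attached to different boundary components of $D$.
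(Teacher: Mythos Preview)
Your argument is correct and reaches the same formula as the paper, but it is organized somewhat differently. The paper expands $f'_*([\gamma_i,\gamma_j])[\gamma_i,\gamma_j]^{-1}$ directly: it first shows $f'_*(\gamma_i)=i_*(w_i)\,\gamma_i\,i_*(w_i)^{-1}$ (exactly your intermediate identity), substitutes this into the commutator, and then hammers the resulting expression down with the identities $[ga,b]={}^{g}[a,b][g,b]$ and $[a,hb]=[a,h]\,{}^{h}[a,b]$ before reducing modulo $F'_{k+1}$. You instead package the computation by observing that $d(x):=f'_*(x)x^{-1}$ is a crossed homomorphism $F\to F'_k/F'_{k+1}$ satisfying $d(xy)=d(x)+x\cdot d(y)$, evaluate $d(\gamma_i)=(1-\gamma_i)i_*(w_i)$ once, and then read off $d([\gamma_i,\gamma_j])=(1-\gamma_j)d(\gamma_i)-(1-\gamma_i)d(\gamma_j)$ formally. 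Both routes are the same computation at bottom; yours is cleaner and avoids the long commutator manipulation, at the cost of needing to say explicitly that $d$ is defined on all of $F$ (not just $F'$). The ``furthermore'' clause is handled identically in both, via \lem{braidsinject} and \lem{torsion}.

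One point of presentation to fix: you write ``$\tau'_k(f')([\gamma_i])=(1-\gamma_i)i_*(w_i)$'', but $\gamma_i\notin F'$, so $\tau'_k(f')$ is not defined on it; likewise the sentence about ``the class of $[\gamma_i,\gamma_j]$ in $F'/F''$ equals $(1-\gamma_i)[\gamma_j]^*-\cdots$'' does not type-check. What you are actually using is the crossed homomorphism $d:F\to F'_k/F'_{k+1}$ described above, which agrees with $\tau'_k(f')$ on $F'$; say this explicitly and the argument is clean.
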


\begin{proof}

Let $w_i= \tau_k(f)(A_i)$.  We wish to show that $\tau'_k(f')[\gamma_i, \gamma_j] \ne 0$.  We begin by computing $\tau'_k(f')([\gamma_i, \gamma_j])$.  By construction, $\gamma_i=CA_i\beta_i\overline{A_i}\,\overline{C}$ for some loop $\beta_i$ in $S \setminus D$ by our construction of the basis for $F$.  Then by definition
\begin{align*}
\tau'_k(f')([\gamma_i, \gamma_j]) & =f'([\gamma_i, \gamma_j])[\gamma_j, \gamma_i]\\
&=[f'(\gamma_i), f'(\gamma_j)][\gamma_j, \gamma_i]\\
&=[f'(CA_i\beta_i\overline{A_i}\,\overline{C}), f'(CA_j\beta_j\overline{A_j}\overline{C})][\gamma_j, \gamma_i]\\
&=[C\, if(A_i)\beta_i if(\overline{A_i})\overline{C},C\, if(A_j)\beta_j if(\overline{A_j})\overline{C} ] [\gamma_j, \gamma_i]\\
&=[C\,if(A_i)\beta_i\overline{if(A_i)}\overline{C}, C\,if(A_j)\beta_j \overline{if(A_j)}\overline{C} ] [\gamma_j, \gamma_i]\\
&=\left[C\,i\left(f(A_i)\overline A_i\right) A_i\beta_i\overline{A_i}i\left(A_i\overline{f(A_i)}\right)\overline{C},\right.\\
&\left. \hspace{.7in} C\,i\left(f(A_j)\overline{A_j}\right) A_j \beta_j \overline{A_j}i\left(A_j\overline{f(A_j)}\right)\overline{C} \right] [\gamma_j, \gamma_i]\\
&=\left[\left(C\,i\left(f(A_i)\overline A_i\right)\overline{C}\right) \left(C A_i\beta_i\overline{A_i}\,\overline{C}\right) \left(C \,i\left(A_i\overline{f(A_i)}\right)\overline{C}\right),\right.\\
&  \left. \left(C\,i\left(f(A_j)\overline{A_j}\right)\overline{C}\right) \left(C A_j \beta_j \overline{A_j}\overline{C}\right) \left(Ci\left(A_j\overline{f(A_j)}\right)\overline{C} \right)\right] [\gamma_j, \gamma_i].
\end{align*}

Note that $i_*:\pi_1(D, p_0) \rightarrow \pi_1(S,i(p_0))$.  Allowing a change of basepoint from $\pi_1(S,i(p_0))$ to $\pi_1(S,*)=F$, we may further reduce this expression as follows:
\begin{align*}
\tau'_k(f')([\gamma_i, \gamma_j]) &=[i_*(w_i)\gamma_i i_*(w_i^{-1}), i_*(w_j) \gamma_j i_*(w_j^{-1})][\gamma_j, \gamma_j]\\
&=[[i_*(w_i),\gamma_i] \gamma_i, [i_*(w_j), \gamma_j]\gamma_j][\gamma_j, \gamma_i]
\end{align*}

Using the two commutator identities $[ga,b]=\lup{g}[a,b][g,b]$ and \linebreak $[a,hb]=[a,h] \lup{h}[a,b]$ it is possible to reduce this expression to the following:

\begin{align*}
\tau'_k(f')([\gamma_i,\gamma_j])=&\lup{[i_*(w_i),\gamma_i]}[\gamma_i,[i_*(w_j),\gamma_j]]\lup{[i_*(w_i),\gamma_i][i_*(w_j),\gamma_j]}[\gamma_i, \gamma_j]\\
&\qquad [[i_*(w_i),\gamma_i], [i_*(w_j),\gamma_j]] \lup{[i_*(w_j),\gamma_j]}[[i_*(w_i),\gamma_i],\gamma_j][\gamma_j, \gamma_i].
\end{align*}

As $ i_*(G) \subset F'$ and $w_i,w_j\in G_k$, $i_*(w_i), i_*(w_j) \in F'_k$.  Thus the \linebreak commutators $[i_*(w_i), \gamma_i], [i_*(w_j), \gamma_j]$ are elements of $F'_k$ and hence the \linebreak conjugation in our expression is trivial modulo $F'_{k+1}$.  In addition \linebreak $[[i_*(w_i),\gamma_i], [i_*(w_j),\gamma_j]] \in F'_{k+1}$.  Thus reducing mod $F'_{k+1}$ we obtain:

\begin{align*}
\tau'_k(f')([\gamma_i,\gamma_j])&=[\gamma_i,[i_*(w_j),\gamma_j]][\gamma_i, \gamma_j][[i_*(w_i),\gamma_i],\gamma_j][\gamma_j, \gamma_i]\\
&=[\gamma_i,[i_*(w_j),\gamma_j]][[i_*(w_i),\gamma_i],\gamma_j]
\end{align*}

Equivalently, viewing the $\tau'_k(f')([\gamma_i, \gamma_j])$ as an element of the $\mathbb{Z}[F/F']$ module we can represent it as follows: 
$$
\tau'_k(f')[\gamma_i, \gamma_j]=(1-\gamma_i)(1- \gamma_j)\left(i_*(w_i)-i_*(w_j)\right)
$$
where $(1-\gamma_i),(1-\gamma_j) \in \Z[F/F]$ and $i_*(w_i),i_*(w_j) \in F'_k/F'_{k+1}$.  This proves the first statement of the lemma.

To prove that $w_iw_j^{-1} \ne 0$ shows $\tau'_k(f')\ne 0$, we find it advantageous to express the above computation as follows:
$$
\tau'_k(f')[\gamma_i, \gamma_j]=(1-\gamma_i)(1- \gamma_j)\left(i_*(w_i w_j^{-1})\right).
$$
By \lem{torsion}, $\tau'_k(f')[\gamma_i, \gamma_j]$ is nonzero provided that $i_*(w_i w_j^{-1})$ is nontrivial.  This follows directly from \lem{braidsinject}.
\end{proof}

%%%%%%%%%%%%%%%%%%%%%%%%%%%%%%%%%%%%%%%%%%%%%
%%%%%%%%%%%%%%%%%%%%%%%%%%%%%%%%%%%%%%%%%%%%%

Let $D_n$ be the disk with $n$ holes and let $G(n)=\pi_1(D_n)$.  Let $E(n)$ denote the free group generated by $x_1, x_2, \dots, x_n$.  Let $P(n)$ denote the pure braid group on $n$ strands.  Consider the inclusion $\iota:  E(n-1) \rightarrow P(n)$ obtained by mapping the generator $x_i$ of $E(n-1)=\langle x_1, \dots ,x_{n-1}\rangle $ by $x_i \mapsto A_{i,n}$ where $A_{i,n}$ is the generator of the pure braid group which clasps strands $i$ and $n$ \cite{B} as shown in \figr{braid generator}.  

Forgetting to fix the boundary components in the interior of the disk, any mapping class in $Mod(D)$ is isotopic to the identity.  The trace of this isotopy permutes the boundary components on the interior of the disk to generate a pure braid.  This correspondence is an isomorphism between $P(n)$ and $Mod(D_n)$.  We denote this natural map $\psi:P(n) \rightarrow Mod(D_n)$.  In particular it is important to note that the pure braid generator $A_{i,n}$ yields a mapping class $f_{i,n}$ on $D_n$ given by a single dehn twist (twisting right) around the $i^{th}$ and $n^{th}$ punctures as shown in \figr{braid generator}.   
\begin{figure}[h]
\begin{center}
\includegraphics[scale=.19]{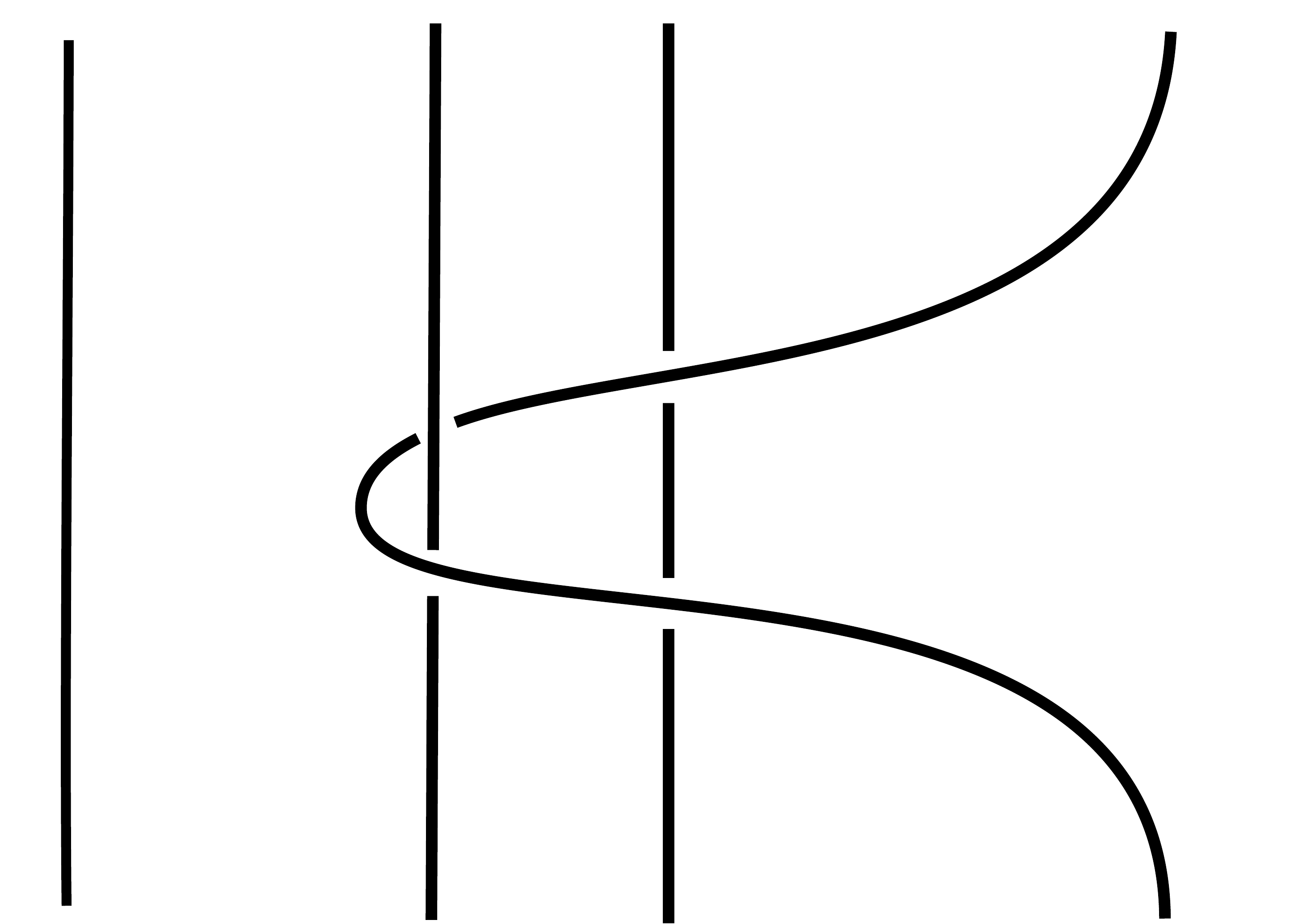} \quad \includegraphics[scale=.21]{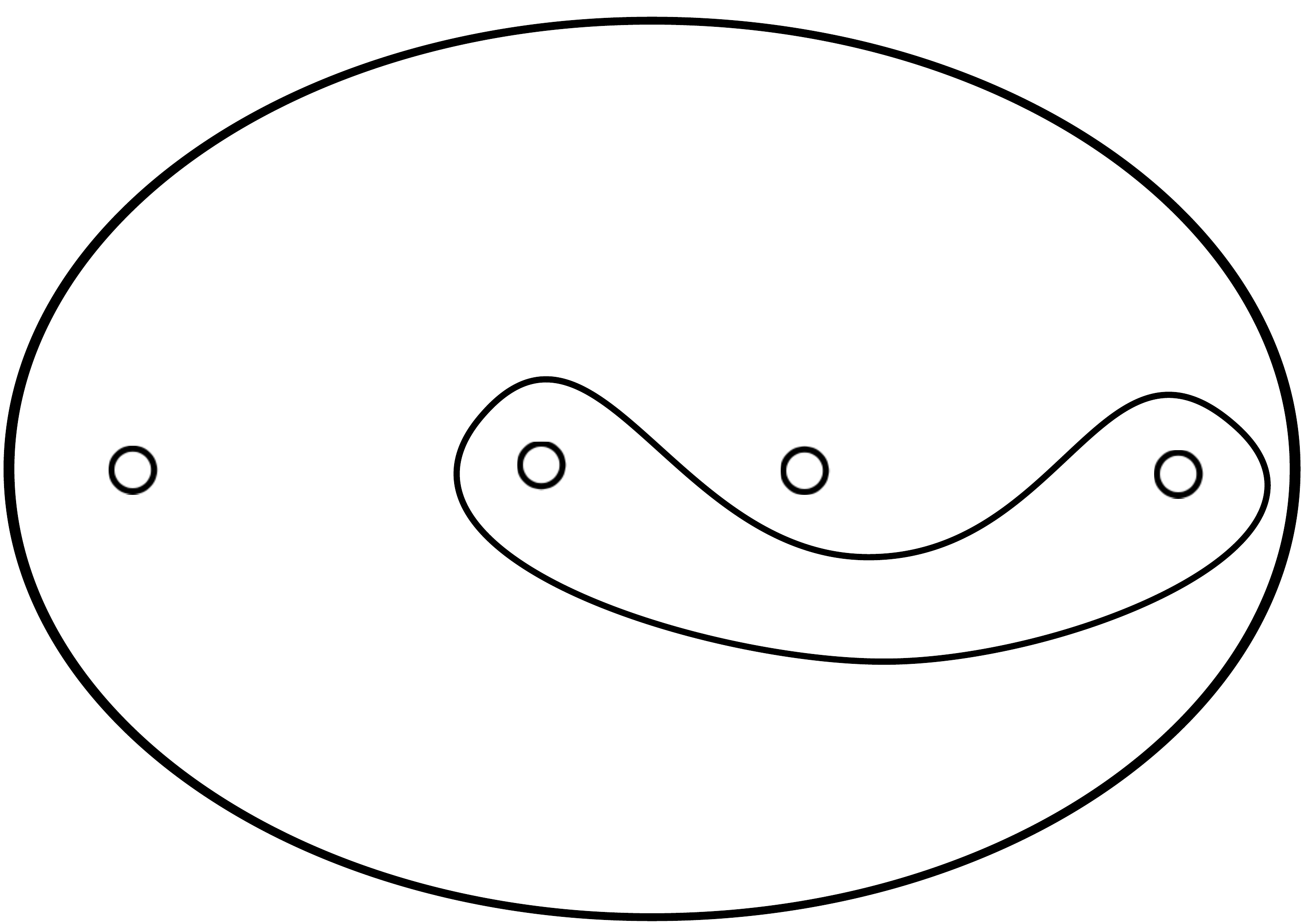}
\put(-347,115){{\small 1}}
\put(-329,115){{\small $\cdots$}}
\put(-300,115){{\small $i$}}
\put(-280,115){{\small $i+1$}}
\put(-240,115){{\small $\cdots$}}
\put(-209,115){{\small $n$}}
\put(-162,75){{\small 1}}
\put(-138,75){{\small $\cdots$}}
\put(-107,75){{\small $i$}}
\put(-80,75){{\small $i+1$}}
\put(-45,75){{\small $\cdots$}}
\put(-18,75){{\small $n$}}
\caption[The generator $A_{i,n}$ of the pure braid group and the Dehn twist $f_{i,n}$ corresponding to $A_{i,n}$]{Left: The generator $A_{i,n}$ of the pure braid group.  Right: The Dehn twist $f_{i,n}$ corresponding to $A_{i,n}$ }
\label{F:braid generator}
\end{center}
\end{figure}
Note that as function composition is written right to left, the map $\psi$ acts by reversing the order of pure braid generators: $\psi(A_{p_1,n}^{\epsilon_1} \cdots A_{p_m,n}^{\epsilon_m})=f_{p_m,n}^{\epsilon_m} \cdots f_{p_1,n}^{\epsilon_1}$. 

For a mapping class $f \in Mod(D_n)$, let $\phi_i(f)$ be given by $\phi_i(f)=f(A_i)\overline{A_i}$.

\begin{lemma}\label{L:identity}
The map $\theta:E(n-1) \rightarrow G(n-1)$ given by the composition of maps
$$
E(n-1) \stackrel{\iota}{\hookrightarrow} P(n) \stackrel{\psi}{\hookrightarrow} Mod(D_n) \stackrel{\phi_n}{\rightarrow} G(n)\stackrel{\pi}{ \rightarrow} G(n-1)
$$
 is the isomorphism induced by mapping $x_i \mapsto y_i$.

The map $\mu:E(n-1) \rightarrow G(n-1)$ given by the composition of maps
$$
E(n-1) \stackrel{\iota}{\hookrightarrow} P(n) \stackrel{\psi}{\hookrightarrow} Mod(D_n) \stackrel{\phi_1}{\rightarrow} G(n) \rightarrow G(n-1)
$$
 is the homomorphism given by $v \mapsto y_1^{\eta}$ where $\eta$ is the sum of the $x_1$  exponents in $v$.

\end{lemma}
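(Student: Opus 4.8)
The plan is to prove the two assertions in parallel, since the composites defining $\theta$ and $\mu$ differ only in whether $\phi_n$ or $\phi_1$ is inserted. Write $\pi\colon G(n)\to G(n-1)$ for the epimorphism killing $y_n$ (the one induced by filling in the $n$-th hole of $D_n$), and for $j\in\{1,n\}$ abbreviate $\theta_j:=\pi\circ\phi_j\circ\psi\circ\iota\colon E(n-1)\to G(n-1)$, so that $\theta=\theta_n$ and $\mu=\theta_1$. The first thing I would record is that $\phi_j$ is a \emph{crossed} homomorphism: for all $f,g\in Mod(D_n)$,
\[\phi_j(fg)=f_*\bigl(\phi_j(g)\bigr)\,\phi_j(f)\qquad\text{in }G(n),\]
where $f_*$ is the automorphism of $G(n)=\pi_1(D_n,p_0)$ induced by $f$. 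This is immediate from the rel-endpoints homotopy $g(A_j)\simeq\phi_j(g)\cdot A_j$ (the two sides differ by the backtracking segment $\overline{A_j}A_j$), after applying $f$ and using that $f$ fixes $p_0$.

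The conceptual crux is that every element of $\im(\psi\iota)$ acts trivially on $G(n)$ modulo $y_n$. By construction $\psi\iota$ sends the free generators $x_i$ to the elements $f_{i,n}=\psi(A_{i,n})$, and the $A_{i,n}$ generate the kernel of the forgetful homomorphism $P(n)\to P(n-1)\cong Mod(D_{n-1})$ (the Birman point-pushing subgroup); hence every $f\in\im(\psi\iota)$ descends to the identity of $Mod(D_{n-1})$ and so induces the identity on $\pi_1(D_{n-1},p_0)=G(n-1)$. Since $\pi$ intertwines $f_*$ with this induced map (naturality of $\pi_1$ on the square $D_n\to D_{n-1}$), we obtain $\pi\bigl(f_*(z)\bigr)=\pi(z)$ for every $z\in G(n)$.

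Combining the two ingredients shows $\theta_j$ is a genuine group homomorphism: since $\iota$ is a homomorphism and $\psi$ reverses the order of products, $\psi\iota(v_1v_2)=g_2g_1$ with $g_k:=\psi\iota(v_k)$; the crossed-homomorphism identity gives $\phi_j(g_2g_1)=(g_2)_*\bigl(\phi_j(g_1)\bigr)\phi_j(g_2)$, and applying $\pi$ together with the triviality observation (noting $g_2\in\im(\psi\iota)$) yields $\theta_j(v_1v_2)=\theta_j(v_1)\theta_j(v_2)$. A homomorphism of free groups is determined by its values on generators, so it remains to compute $\theta_j(x_i)=\pi\bigl(\phi_j(f_{i,n})\bigr)$, which I would do geometrically by pushing the arc $A_j$ across the twisting curve $\delta_i$ of $f_{i,n}$ (see \figr{arcs} and \figr{braid generator}). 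For $j=n$ the arc $A_n$ crosses $\delta_i$ and one reads off $\pi\bigl(\phi_n(f_{i,n})\bigr)=y_i$ (after filling the $n$-th hole, $\delta_i$ becomes a loop encircling the $i$-th hole), giving the asserted isomorphism $\theta$. For $j=1$ the arc $A_1$ meets $\delta_i$ precisely when $i=1$ (with the arcs positioned as in the figures), so $\pi\bigl(\phi_1(f_{i,n})\bigr)$ equals $y_1$ if $i=1$ and is trivial otherwise; since $\mu$ is a homomorphism this forces $\mu(v)=y_1^{\eta}$ with $\eta$ the exponent sum of $x_1$ in $v$.

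The formal part — the crossed-homomorphism identity, the anti-multiplicativity of $\psi$, and the resulting homomorphism property — is routine, and I expect the real obstacle to be the last computation: one must fix the arcs $A_i$ and the twisting curves $\delta_i$ consistently, exactly as drawn in \figr{arcs} and \figr{braid generator}, so that the auxiliary conjugating paths occurring in $\phi_j(f_{i,n})$ genuinely vanish under $\pi$ and the orientation conventions produce $y_i$ rather than $y_i^{-1}$. Once that bookkeeping is in place, the clean statement of the lemma drops out of the homomorphism property established above.
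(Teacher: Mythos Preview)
Your proof is correct and takes a genuinely different route from the paper's. The paper proceeds by brute force: it explicitly computes $f_{i,n}^{\pm1}(A_1)$, $f_{i,n}^{\pm1}(A_n)$, and $f_{i,n}^{\pm1}(y_j)$ for every $j$, observes that modulo the normal closure $N$ of $y_n$ the generators $f_{i,n}^{\pm1}$ act trivially on each $y_j$, and then runs an induction on the word length $m$ to obtain the closed formulas $f(A_n)\overline{A_n}\equiv y_{p_1}^{\epsilon_1}\cdots y_{p_m}^{\epsilon_m}$ and $f(A_1)\overline{A_1}\equiv y_1^{\eta}$ modulo $N$. Your approach replaces this induction entirely: the crossed-homomorphism identity for $\phi_j$ together with the observation that $\im(\psi\iota)$ acts trivially on $G(n)/N$ immediately forces $\theta_j$ to be a genuine homomorphism, so only the single-generator computation $\pi\bigl(\phi_j(f_{i,n})\bigr)$ is needed. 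This is cleaner and explains structurally why the map is a homomorphism rather than verifying it one letter at a time.

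Two small points worth tightening. First, your sentence ``every $f\in\im(\psi\iota)$ descends to the identity of $Mod(D_{n-1})$'' is slightly delicate: with the paper's convention that $Mod$ fixes all boundary components pointwise, capping the $n$-th hole sends $f_{i,n}$ to a Dehn twist about a curve parallel to the $i$-th boundary, which is not literally the identity mapping class. The conclusion you need---that it acts trivially on $\pi_1(D_{n-1})$---still holds, since boundary-parallel twists fix every based loop; but it would be safer to phrase the claim at the level of the induced action on $G(n-1)$ (which is exactly what the paper verifies by its direct computation of $f_{i,n}(y_j)$ modulo $N$). Second, your handling of the order-reversal in $\psi$ is consistent with the paper's explicit formula $\psi(A_{p_1,n}^{\epsilon_1}\cdots A_{p_m,n}^{\epsilon_m})=f_{p_m,n}^{\epsilon_m}\cdots f_{p_1,n}^{\epsilon_1}$, and it is precisely this reversal that makes the crossed-homomorphism twist cancel to give a homomorphism rather than an anti-homomorphism; you might state this cancellation explicitly, since it is the linchpin of the argument.
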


\begin{proof}
To show this it suffices to trace $v \in E(n-1)$ through the above maps.  By the above definitions it is clear that for $v=x_{p_1}^{\epsilon_1} \cdots x_{p_m}^{\epsilon_m}$ that $\left(\psi \circ \iota\right) (v)=f_{p_m,n}^{\epsilon_m} \cdots f_{p_1,n}^{\epsilon_1}$.  Let $\left(\psi \circ \iota\right) (v)=f$.

To compute $\phi_n(f)$ and $\phi_1(f)$ we examine the image of the arcs $A_1$ and $A_n$, and the generators of $G(n)$ under a map $f_{i,n}$.  By direct computation we find that:

\begin{align*}
f_{i,n}(A_1)& \simeq {\begin{cases} A_1 \qquad & \text{ if } 1<i \\ y_n y_1 A_1 & \text{ if } i=1 \end{cases}} \\
f_{i,n}(A_n)& \simeq y_n y_i A_n \\
f_{i,n}(y_n)& \simeq y_n y_i y_n y_i^{-1}y_n^{-1}\simeq y_n[y_i,y_n] \\
f_{i,n}(y_i)& \simeq y_n y_i y_n^{-1} \simeq y_i[y_i^{-1},y_n]\\
f_{i,n}(y_j)&\simeq
{\begin{cases} [y_i,y_n]^{-1} y_j [y_i,y_n]  \qquad & \text{if } i < j, j \ne g\\ y_j \qquad \qquad \qquad \qquad \,\,\,& \text{if } i> j \end{cases}} 
\end{align*}

Similarly, we can compute the image of the arcs $A_1$ and $A_n$, and the generators of $G(n)$ under the map $f_{i,n}^{-1}$, the left handed Dehn twist about the same simple closed curve.

\begin{align*}
f_{i,n}^{-1}(A_1)& \simeq {\begin{cases} A_1 \qquad & \text{ if } 1<i \\ y_1^{-1} y_n^{-1} A_1 & \text{ if } i=1 \end{cases}} \\
f_{i,n}^{-1}(A_n)& \simeq y_i^{-1} y_n^{-1} A_n \\
f_{i,n}^{-1}(y_n)& \simeq y_i y_n^{-1} y_n y_n y_i\simeq [y_i^{-1},y_n] y_n \\
f_{i,n}^{-1}(y_i)& \simeq y_i^{-1} y_n^{-1} y_i y_n y_i \simeq [y_i^{-1},y_n^{-1}] y_i\\
f_{i,n}^{-1}(y_j)&\simeq
{\begin{cases} [y_i^{-1},y_n^{-1}] y_j [y_i^{-1},y_n^{-1}]^{-1}  \qquad & \text{if } i < j, j \ne g\\ y_j \qquad \qquad \qquad \qquad \,\,\,& \text{if } i> j \end{cases}} 
\end{align*}

Let $N$ be the normal subgroup of $G(n)$ normally generated by $y_n$.  We can rewrite the above computations as follows.  We use $f_{i,n}$ and $f_{i,n}^{-1}$ to denote both the mapping classes and their induced map on $G(n)$ for convenience of notation.

\begin{align*}
f_{i,n}(A_1)\overline{A_1}& \simeq{\begin{cases} 1 \qquad & \text{ if } 1<i \\ y_1 & \text{ if } i=1 \end{cases}} \\
f_{i,n}(A_n)\overline{A_n}&\simeq y_i \\
f_{i,n}(y_n)&\simeq 1\\
f_{i,n}(y_j)&\simeq y_j  \qquad \qquad \text{if }  j \ne g
\end{align*}

For the inverse map $f_{i,n}^{-1}$ we compute:

\begin{align*}
f_{i,n}^{-1}(A_1)\overline{A_1}& \simeq{\begin{cases} 1 \qquad & \text{ if } 1<i \\ y_1^{-1} & \text{ if } i=1 \end{cases}} \\
f_{i,n}^{-1}(A_n)\overline{A_n}&\simeq y_i^{-1} \\
f_{i,n}^{-1}(y_n)&\simeq 1\\
f_{i,n}^{-1}(y_j)&\simeq y_j  \qquad \qquad \text{if }  j \ne n
\end{align*}

From the above it is clear that $f_{i,n}$ and $f_{i,n}^{-1}$ act by the identity on the first $n-1$ generators of $G$, and sends $y_n$ to an element of the subgroup $N$.   Note that the map $G \mapsto G/N$ is the homomorphism of $\pi_1(D_n)$ induced by the map $\ell$ which caps the $n^{th}$ boundary component of $D_n$ as in \figr{capping}.
\begin{figure}[h] %  figure placement: here, top, bottom, or page
   \centering
   \includegraphics[width=2.24in]{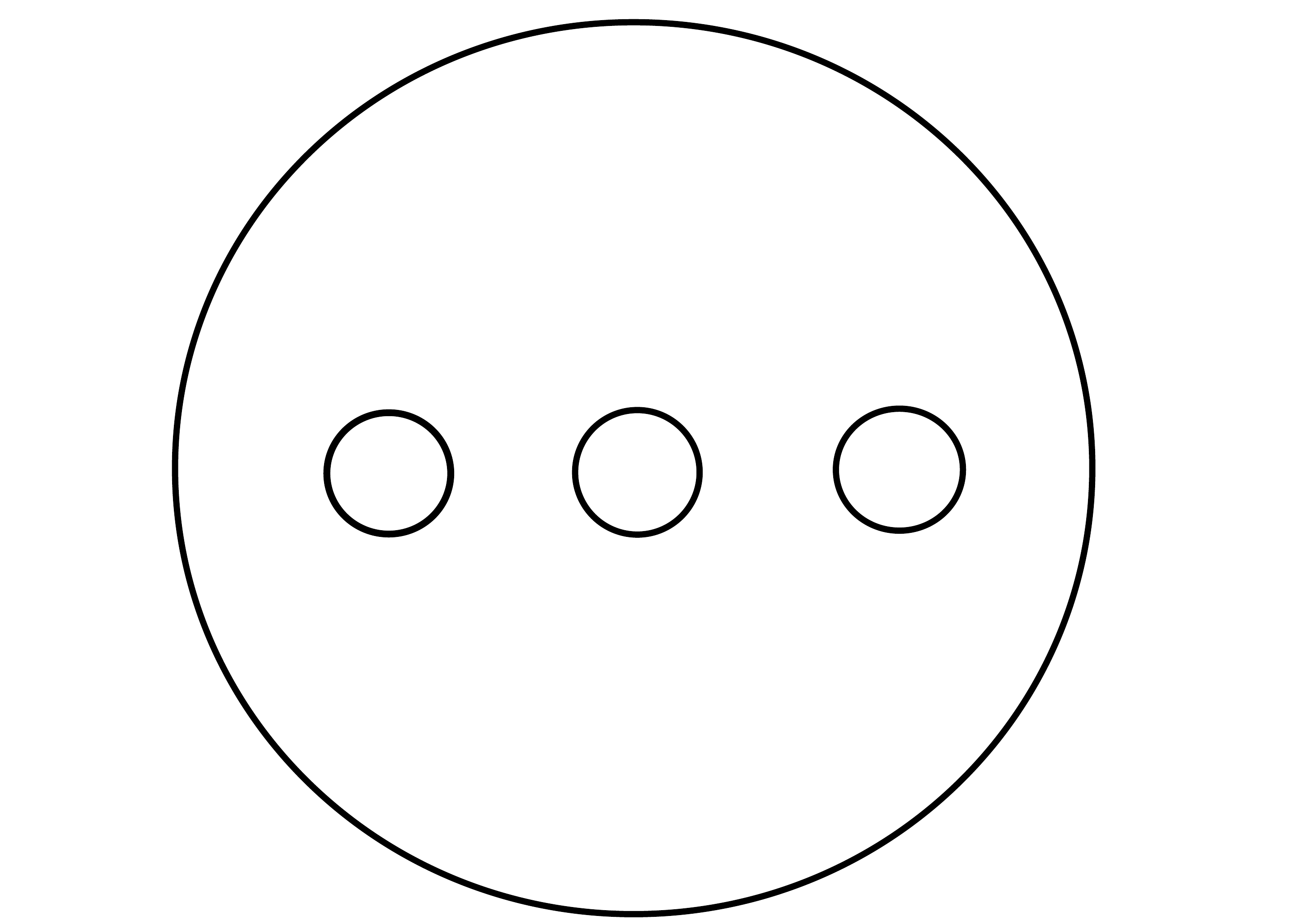}  \hspace{1cm}\includegraphics[width=2.24in]{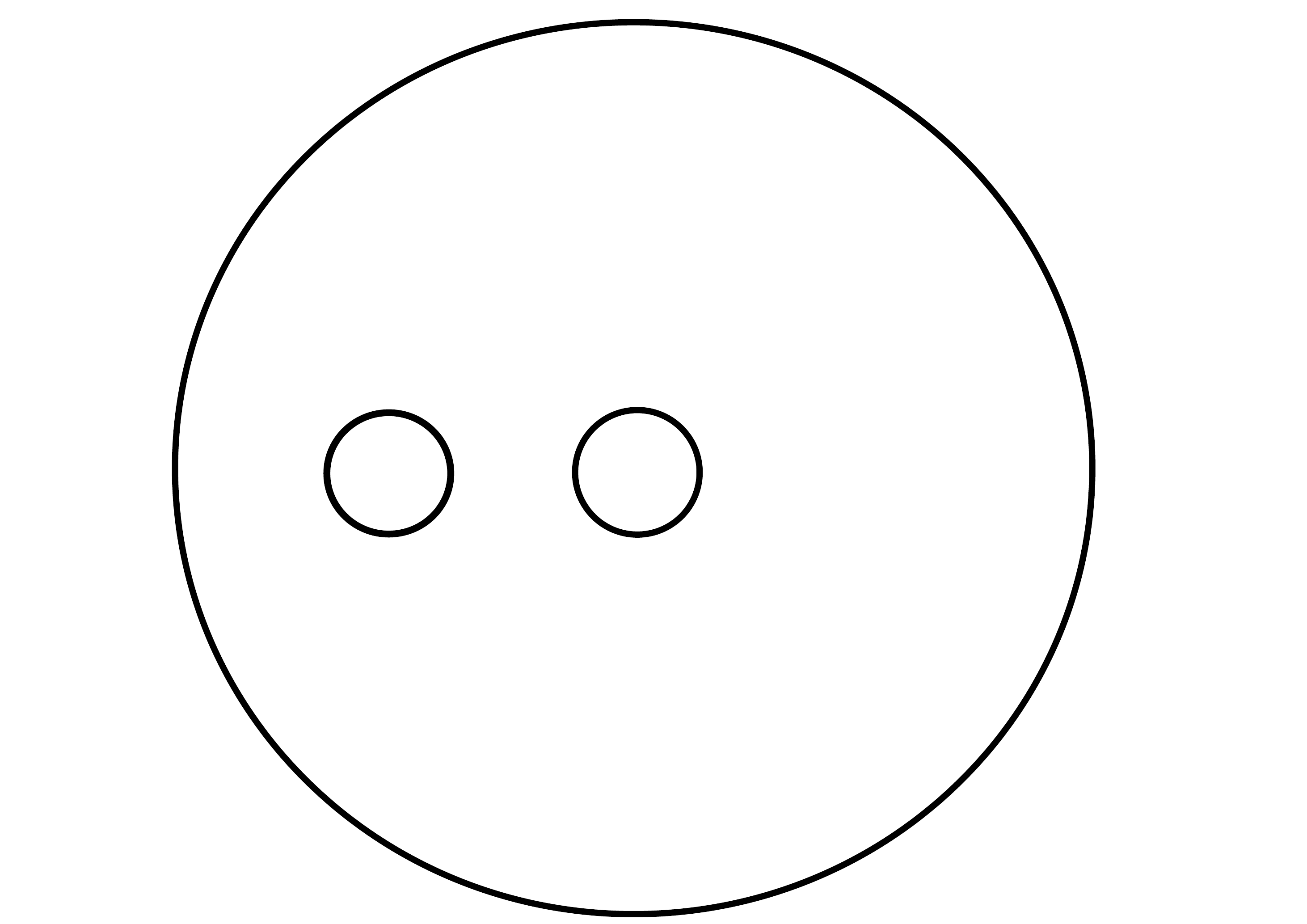} 
   \put(-200,50){$\xrightarrow{\hspace*{1cm}}$}
   \put(-297,55){\scriptsize $\dots$}
      \put(-103,55){\scriptsize $\dots$}
   \caption[The map $\ell:D_n \rightarrow D_{n-1}$]{Above is an illustration of the map $\ell:D_n \rightarrow D_{n-1}$ obtained by capping off the $n^{th}$ boundary component.  From this one can see $\pi_1(D_{n-1})=\langle y_1, \dots y_{n-1}\rangle \cong G/N$.}
   \label{F:capping}
\end{figure}
As the mapping classes $f_{i,n}$ and $f_{i,n}^{-1}$ fix the $n^{th}$ boundary component and $\ell$ is an inclusion map, the map $\ell$ commutes with $f_{i,n}$ and $f_{i,n}^{-1}$.  Hence \linebreak $N=f_{i,n}(N)=f_{i,n}^{-1}(N)$.  Thus, given a word in $v\in G$, $f_{i,n}$ and $f_{i,n}^{-1}$ each map $v$ to a word of the same class in $G/N$.  

We show by induction that for $f=f_{p_m,n}^{\epsilon_m} \cdots f_{p_1,n}^{\epsilon_1}$ the following computations hold  mod $N$:
\begin{align*}
f(A_1)\overline{A_1}&=y_1^\eta\\
f(A_n)\overline{A_n}&=y_{p_1}^{\epsilon_1} \cdots y_{p_m}^{\epsilon_m}\\
\end{align*}
where $\eta=\sum_{p_i=1} \epsilon_i$.  

For simplicity of notation we rewrite these equalities as
\begin{align*}
f(A_1)\overline{A_1}&=y_1^{\pm 1} \cdots y_1^{\pm 1}\\
f(A_n)\overline{A_n}&=y_{l_1}^{\pm1} \cdots y_{l_k}^{\pm1}\\
\end{align*}
where indexes are allowed to repeat.  The initial case of the induction was done by previous computations.  Suppose that the above computations hold.  Then it follows that
\begin{align*}
f(A_1)&\simeq a_{0}y_1^{\pm 1}\cdots y_1^{\pm 1} A_1\\
f(A_n)&\simeq a_{0}' y_{l_1}^{\pm1} \cdots y_{l_k}^{\pm1}A_n.\\
\end{align*} 
where $a_0, a_0' \in N$.  Consider $f_{p_{m+1},n}^{\pm 1}f_{p_m,n}^{\epsilon_m} \cdots f_{p_1,n}^{\epsilon_1}=f_{p_{m+1},n}^{\pm 1}f$.  As $f_{p_{m+1},n}^{\pm 1}$ acts by the identity on $G/N$, we have that 

\begin{align*}
f_{p_{m+1},n}(y_{i})=a_{i}^+y_{i}\\
f_{p_{m+1},n}^{-1}(y_{i})=a_{i}^-y_{i}\\
\end{align*}
for some $a_{i}^+, a_{i}^- \in N$, for all $i$.   Hence we compute
\begin{align*}
f_{p_{m+1},n}f(A_1)&\simeq \begin{cases} f_{p_{m+1},n}(a_{0})\left(a_1^+ y_1\right)^{\pm 1}\cdots \left( a_1^+y_1\right)^{\pm 1} A_1 & \text{if }p_{m+1}\ne 1\\ f_{p_{m+1},n}(a_{0})\left(a_1^+ y_1\right)^{\pm 1}\cdots \left( a_1^+y_1\right)^{\pm 1} y_n y_1A_1 & \text{if }p_{m+1}=1 \end{cases}\\
f_{p_{m+1},n}^{-1}f(A_1)&\simeq \begin{cases} f_{p_{m+1},n}^{-1}(a_{0})\left(a_1^- y_1\right)^{\pm 1}\cdots \left( a_1^-y_1\right)^{\pm 1} A_1 & \text{if }p_{m+1}\ne 1\\ f_{p_{m+1},n}^{-1}(a_{0})\left(a_1^- y_1\right)^{\pm 1}\cdots \left( a_1^-y_1\right)^{\pm 1} y_1^{-1}y_n^{-1}A_1 & \text{if }p_{m+1}=1 \end{cases}\\
f_{p_{m+1},n}f(A_n)&\simeq f_{p_{m+1},n}(a_{0}') \left(a_{l_1}^+y_{l_1}\right)^{\pm1} \cdots \left(a_{l_k}^+y_{l_k}\right)^{\pm1}y_n y_{p_{m+1}}A_n\\
f_{p_{m+1},n}^{-1}f(A_n)&\simeq f_{p_{m+1},n}(a_{0}') \left(a_{l_1}^-y_{l_1}\right)^{\pm1} \cdots \left(a_{l_k}^-y_{l_k}\right)^{\pm1} y_{p_{m+1}}^{-1}y_n^{-1}A_n.\\
\end{align*} 

Therefore, as $f_{p_{m+1},n}^{\pm 1}(a_{0}), f_{p_{m+1},n}^{\pm 1}(a_{0}'), a_i^+, a_i^-, y_n, y_n^{-1} \in N$, we can do the following computation mod $N$.
\begin{align*}
f_{p_{m+1},n}^{\pm 1}f_{p_m,n}^{\epsilon_m} \cdots f_{p_1,n}^{\epsilon_1}(A_1)\overline{A_1}&= \begin{cases} y_1^\eta y_1^{\pm 1} & \text{if } {p_{m+1}=1}\\ y_1^\eta & \text{if } {p_{m+1}\ne1} \end{cases}\\
f_{p_{m+1},n}^{\pm 1}f_{p_m,n}^{\epsilon_m} \cdots f_{p_1,n}^{\epsilon_1}(A_n)\overline{A_n}&=y_{p_1}^{\epsilon_1} \cdots y_{p_m}^{\epsilon_m}y_{p_{m+1}}^{\pm 1}
\end{align*}
This completes the induction.

Thus $\theta(x_{p_1}^{\epsilon_1} \cdots x_{p_m}^{\epsilon_m})=y_{p_1}^{\epsilon_1} \cdots y_{p_m}^{\epsilon_m}$ and $\mu(w)=y_1^{\eta}$, as desired.

\end{proof}

Note that for words $v \in [E(n-1), E(n-1)]$ the maps $f_{1,n}$ occur in pairs with opposite exponents.  Hence for $v \in [E(n-1), E(n-1)]$, $\mu(v)=1$.

%%%%%%%%%%%%%%%%%%%%%%%%%%%%%%%%%%%%%%%%%%%%%%
%%%%%%%%%%%%%%%%%%%%%%%%%%%%%%%%%%%%%%%%%%%%%%

\subsection{Structure of the Magnus subgroup quotients}\label{S:mainthm}

In Lemma \ref{L:identity} we considered compositions of maps which defined a correspondence between elements of the free group $E(n-1)$ and elements of $Mod(D_n)$.  Lemma \ref{L:inclusionnonzero} allows us to relate the Johnson homomorphism $J_k(D_n)$ to the Magnus homomorphism $M_k(S_g)$.  We now combine these tools to construct families of mapping classes in $M_k(S_g)$ which have a desirable algebraic structure in the image of the Magnus homomorphism.

Let $i:D_g \rightarrow S_g$ be the separable embedding illustrated in \figr{embedding}.  Consider the following composition of maps:
$$
E(g-1) \stackrel{\iota}{\hookrightarrow} P(g) \stackrel{\psi}{\hookrightarrow} Mod(D_g) \stackrel{i'}{\rightarrow} Mod(S_g)
$$
where $i':Mod(D_g) \rightarrow Mod(S_g)$ is the map described in Lemma \ref{L:inclusion}.  Let $\rho=i' \circ \psi \circ \iota$.  The following theorem illustrates that $\rho$ retains the structure of the free group.

\begin{theorem}\label{T:containsbraids}
Let $S$ be an orientable surface with genus $g \ge 3$.  Then the map $\rho: E(g-1) \rightarrow Mod(S)$ induces a monomorphism on the quotients $\overline{\rho}:E(g-1)_k/ E(g-1)_{k+1} \rightarrow M_k(S)/M_{k_1}(S)$ for all $k$.
\end{theorem}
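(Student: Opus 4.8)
The plan is to factor $\rho$ through $Mod(D_g)$. Write $\sigma=\psi\circ\iota\colon E(g-1)\to Mod(D_g)$, so that $\rho=i'\circ\sigma$, and reduce the theorem to two assertions: (A) $\sigma(E(g-1)_k)\subseteq J_k(D_g)$ for all $k$, and (B) $\tau'_k(\rho(v))\neq 0$ whenever $v\in E(g-1)_k\setminus E(g-1)_{k+1}$. Granting (A), \lem{inclusion} gives $\rho(E(g-1)_k)\subseteq M_k(S)$ for every $k$, so $\overline\rho$ is well defined (equivalently $\overline\rho=\overline i\circ\overline\sigma$ with $\overline i$ as in \prop{homomorphism}); granting (B), the remark following the definition of $\tau'_k$ gives $\rho(v)\notin M_{k+1}(S)$, i.e.\ injectivity of $\overline\rho$.

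For (A) I would verify the two conditions of \defn{multipleboundary}. Condition (1) — that $\sigma(v)$ act trivially on $\pi_1(D_g)/\pi_1(D_g)_k$ — is easy: each generator $\sigma(x_i)=f_{i,g}$ acts trivially on $H_1(D_g)$ by the formulas in \lem{identity}, so writing $\mathcal J_m$ for the subgroup of $Mod(D_g)$ acting trivially on $\pi_1(D_g)/\pi_1(D_g)_m$ and using Morita's commutator estimate $[\mathcal J_k,\mathcal J_l]\subseteq\mathcal J_{k+l-1}$ — which is exactly what the proof of \lem{commutator} establishes — an induction gives $\sigma(E(g-1)_m)\subseteq\mathcal J_{m+1}\subseteq\mathcal J_m$. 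Condition (2) — that $\Psi_j(v):=\sigma(v)(A_j)\overline{A_j}$ lie in $\pi_1(D_g)_k$ for $v\in E(g-1)_k$ and every arc $A_j$ — is the technical heart, and I expect it to be the main obstacle. Since $\sigma$ is a homomorphism, $\Psi_j$ satisfies the twisted cocycle identity $\Psi_j(vw)=\sigma(v)_*(\Psi_j(w))\cdot\Psi_j(v)$. Reducing this modulo $\pi_1(D_g)_2$ and using $\sigma(E(g-1))\subseteq\mathcal J_2$ shows $v\mapsto\overline{\Psi_j(v)}$ is a homomorphism $E(g-1)\to H_1(D_g)$, hence vanishes on $E(g-1)_2$; this is the base case. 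For the inductive step I would write a generator of $E(g-1)_k$ as a commutator $[a,b]$ with $a\in E(g-1)_{k-1}$, $b\in E(g-1)$, expand $\Psi_j([a,b])$ by the cocycle identity (and its consequence for inverses) into four factors, two of which telescope to $\Psi_j(b)^{-1}\Psi_j(b)$ up to error terms lying in $\pi_1(D_g)_{k-1}$, the remaining two also lying in $\pi_1(D_g)_{k-1}$ — here one uses $\sigma(E(g-1)_{k-1})\subseteq\mathcal J_k$ together with the fact (again from the proof of \lem{commutator}, i.e.\ Morita) that an element of $\mathcal J_k$ moves $\pi_1(D_g)_n$ into $\pi_1(D_g)_{n+k-1}$. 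Since $\pi_1(D_g)_{k-1}/\pi_1(D_g)_k$ is central in $\pi_1(D_g)/\pi_1(D_g)_k$, these error terms commute past one another and cancel, giving $\Psi_j([a,b])\in\pi_1(D_g)_k$; the cocycle identity then propagates this to all of $E(g-1)_k$.

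For (B), fix $v\in E(g-1)_k\setminus E(g-1)_{k+1}$. Having shown $\sigma(v)\in J_k(D_g)$, I would apply \lem{inclusionnonzero} with $f=\sigma(v)$ and any two arcs $A_a,A_b$ whose complementary subsurfaces in the embedding of \figr{embedding} have positive genus (so the basis loops $\gamma_a,\gamma_b$ of \figr{embeddingbasis} are available):
$$
\tau'_k(\rho(v))[\gamma_a,\gamma_b]=(1-\gamma_a)(1-\gamma_b)\, i_*(w_a w_b^{-1}),\qquad w_\ell=\tau_k(\sigma(v))(A_\ell),
$$
which by the second part of \lem{inclusionnonzero} is nonzero provided $w_a\neq w_b$ in $\pi_1(D_g)_k/\pi_1(D_g)_{k+1}$ (this is where \lem{braidsinject} and \lem{torsion} enter). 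So the remaining task is to produce such a pair, and here \lem{identity} does the work: under the map $\pi\colon\pi_1(D_g)\to\pi_1(D_{g-1})$ capping the last hole one has $\pi(\Psi_g(v))=\theta(v)$ and $\pi(\Psi_1(v))=\mu(v)=y_1^{\eta}$, with $\theta$ an isomorphism and $\eta$ the total $x_1$-exponent of $v$. For $k\geq 2$ we have $\eta=0$, so $\pi(\Psi_1(v))=1$ while $\pi(\Psi_g(v))=\theta(v)\notin\pi_1(D_{g-1})_{k+1}$; were $w_g=w_1$, applying $\pi$ would force $\theta(v)\in\pi_1(D_{g-1})_{k+1}$, a contradiction, so $w_g\neq w_1$ and one takes $(a,b)=(g,1)$. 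For $k=1$ I would instead pick an index $i_0$ with nonzero $x_{i_0}$-exponent and use the explicit action of the $f_{i,g}$ on arcs (\lem{identity}) to check directly in $H_1(D_g)$ that $w_{i_0}\neq w_{j_0}$ for a suitable second index $j_0$, which exists because $g\geq 3$. In every case $\tau'_k(\rho(v))\neq 0$, hence $\rho(v)\notin M_{k+1}(S)$, and $\overline\rho$ is a monomorphism.
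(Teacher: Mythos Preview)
Your proposal is correct and follows the same overall strategy as the paper: factor $\rho=i'\circ\psi\circ\iota$, establish that $\sigma(E(g-1)_k)\subset J_k(D_g)$ so that $\overline\rho$ is defined, and then detect injectivity via $\tau'_k(-)[\gamma_1,\gamma_g]$ using \lem{inclusionnonzero} together with \lem{identity} and the projection $\pi$ capping the last hole. Your step (B) is essentially identical to the paper's, which computes $\pi(w_1w_g^{-1})=\mu(v)\theta(v)^{-1}=v^{-1}$ and concludes exactly as you do.

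The one genuine difference is in how you handle (A). The paper routes this through the pure braid group: it asserts $\psi(P(g))\subset J_2(D_g)$ (citing Birman), invokes \lem{commutator} to get $\psi(P(g))_k\subset J_k(D_g)$, and separately cites Falk--Randell and Gervais--Habegger for the injectivity of $\overline\iota$ and $\overline\psi$. You instead prove $\sigma(E(g-1)_k)\subset J_k(D_g)$ directly by your cocycle induction, treating the loop condition and the arc condition separately. Your route is more self-contained and arguably cleaner: the intermediate injectivity statements from \cite{FR} and \cite{GH} are not actually needed once one shows the full composite $\tau'_k(-)[\gamma_1,\gamma_g]\circ\overline\rho$ is injective, and your hands-on verification of condition~(2) sidesteps the delicate point that the individual Dehn twists $f_{i,g}$ do \emph{not} satisfy condition~(2) of \defn{multipleboundary} at level~$2$ (since $f_{i,g}(A_g)\overline{A_g}=y_gy_i\notin\pi_1(D_g)_2$), so the base case really does require the observation that $\overline{\Psi_j}$ is a homomorphism to $H_1(D_g)$ and hence kills commutators. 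One small wording issue: in the inductive step the four factors do not literally ``telescope to $\Psi_j(b)^{-1}\Psi_j(b)$''; rather, modulo $\pi_1(D_g)_k$ they reduce to the commutator $[\Psi_j(b)^{-1},\Psi_j(a)^{-1}]$, which lies in $\pi_1(D_g)_k$ because $\Psi_j(a)\in\pi_1(D_g)_{k-1}$. The paper (and the introduction preceding the theorem) intends $k\ge 2$, so your separate $k=1$ discussion is unnecessary.
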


\begin{proof} Let $D_g$ be a disk with $g$ punctures.  To prove the theorem it suffices to show that mapping classes contained in the subgroup $\rho(E(g-1))$ satisfy the conditions of \lem{inclusionnonzero} and produce distinct images through the Magnus homomorphism.  For this we employ several results about the pure braid group, $P(g)$.   Consider the following split exact sequence 

$$1 \stackrel{}{\rightarrow} E(g-1) {\rightarrow} P(g){ \rightarrow} P(g-1) \rightarrow 1$$

where the map $E(g-1) \rightarrow P(g)$ is as in \lem{identity} and $P(g) \rightarrow P(g-1)$ is given by forgetting the $g^{th}$ strand. This exact sequence induces an isomorphism as given in \cite{FR}:

$$\frac{E(g-1)_k}{E(g-1)_{k+1}} \oplus \frac{P(g-1)_k}{P(g-1)_{k+1}} \cong \frac{P(g)_k}{P(g)_{k+1}}$$

In particular, the map $\iota$ induces an injective map $\overline{\iota}$ on the lower central series quotients:
$$\overline{\iota}:\frac{E(g-1)_k}{E(g-1)_{k+1}} \hookrightarrow \frac{P(g)_k}{P(g)_{k+1}}$$.

By direct analysis of the induced automorphisms on $G(g)$ \cite{B} \linebreak Corollary 1.8.3, it is clear that $\psi\left(P(g) \right)\subset J_2(D_g)$.  Given this, Lemma \ref{L:commutator} shows that $\psi\left(P(g)\right)_k \subset J_k(D_g)$.  Hence, we have a well \linebreak defined map $\overline{\psi}:\frac{P(g)_k}{P(g){k+1}} \rightarrow \frac{J_k(D_g)}{J_{k+1}(D_g)}$.  By \cite{GH}, Theorem 1.1 \linebreak $\psi \left(P(g)_{k+1}\right)=\psi \left(P(g) \right) \cap J_{k+1}(D_g)$.  Hence the map $\overline{\psi}$ is injective.

$$\overline{\psi}:\frac{P(g)_k}{P(g){k+1}} \hookrightarrow \frac{J_k(D_g)}{J_{k+1}(D_g)}$$

By \prop{homomorphism}, the map $i'$ induces a monomorphism \linebreak $\overline{i}:\frac{J_k(D_g)}{J_{k+1}(D_g)} \rightarrow \frac{J_k(D_g)}{J_{k+1}(D_g)}$.  By \lem{inclusionnonzero}, given $v \in \frac{E(g-1)_k}{E(g-1)_k}$ we have that 
$$
 \tau'_k(i'\psi\iota(v))[\gamma_1, \gamma_g]=(1-\gamma_1)(1- \gamma_g)i_*(w_1)i_*(w_g)^{-1}
$$ 
written as an element of $\frac{F'_k}{F'_{k+1}}$ as a $\Z \left[ \frac{F}{F'} \right]$ module where $w_i=\tau_k(f)(A_i)$.  

%A given $v \in E(g-1)_k/E(g-1)_{k+1}$ yields an element $f \in J_k(D_g)/J_{k+1}(D_g)$ with $\pi(f(A_g)\overline{A_g})=\pi(w_g)=v$ (written in generators of $G(g)$) and $\pi(f(A_1)\overline{A_1})=\pi(w_1)=1$.  

Note that we have traced $w \in E(g-1)_k/E(g-1)_{k+1}$ through the following composition of maps:
$$
\frac{E(g-1)_k}{E(g-1)_k} \stackrel{\overline{\iota}}{\hookrightarrow} \frac{P(g)_k}{P(g)_{k+1}}\stackrel{\overline{\psi}}{\hookrightarrow} \frac{J_k(D_g)}{J_{k+1}(D_g)} \stackrel{\overline{i}}{\hookrightarrow}\frac{M_k(S_g)}{M_{k+1}(S_g)} \stackrel{\tau'_k(-)[\gamma_1,\gamma_g]}{\longrightarrow}\frac{F'_k}{F'_{k+1}}
$$.

By definition, the maps $\overline{\iota}$ and $\overline{\psi}$ are homomorphisms.  \prop{homomorphism} shows that $\overline{i}$ is a homomorphism.  The map $\tau'_k(-)[\gamma_1,\gamma_g]: M_k(S) \rightarrow F'_k/F'_{k+1}$ is a homomorphism as $\tau'_k$ is a homomorphism.  As all maps in this composition are homomorphisms,  the composition map is also a homomorphism.

As this composition is a homomorphism, in order to complete the proof it suffices to show that the image of $v$ through this composition is not \linebreak the identity.  As shown in Lemma \ref{L:torsion}, this module has no torsion of \linebreak the form  $(1-\gamma)x=0$ where $\gamma$ is a generator of $F$. Hence for \linebreak $i_*(w_1)i_*(w_g)^{-1}=i_*(w_1w_g^{-1})\ne 1$ as an element of $F'_k/F'_{k+1}$, we have that $ \tau'_k(i'\psi\iota(w))[\gamma_1, \gamma_g]=(1-\gamma_1)(1- \gamma_g)i_*(w_1w_g^{-1}) \ne 0$.  By \lem{braidsinject} the map $i_*$ is injective, thus it suffices to show that $w_1w_g^{-1}\ne1$ as an element of $ \frac{G(g)_k}{G(g)_{k+1}}$.  By \lem{identity},
\begin{align*}
\pi(w_1 w_g^{-1})&=\pi(w_1) \pi(w_g)^{-1}&\\
&= \mu(v) \theta(v)^{-1}&\\
&=v^{-1} \qquad \qquad & \text{when written in the generators $y_i$ of $G(g-1)$.}\\
\end{align*}
%Let $w' \in E(g-1)_k$ such that $w'$ maps to $w$ in the quotient map $E(g-1)_k \rightarrow \frac{E(g-1)_k}{E(g-1)_{k+1}}$.  By \lem{identity} when considering the map $\pi:G(g) \rightarrow G(g-1)$, we have that $\pi \phi_g \psi \iota(w')=w'$ (rewritten in terms of the generators $y_i$ for $G(g)$, we will abuse notation by confusing $G({g-1})$ and $E(g-1)$in this way henceforth) and $\pi \phi_1 \psi \iota(w')=1$.  Consider the quotient map $q:G({g-1})_k \rightarrow \frac{G({g-1})_k}{G({g-1})_{k+1}}$.  Note that by construction 
%$$
%w^{-1}=q(w'^{-1})=q\left(\pi \phi_1 \psi \iota(w')\left(\pi \phi_g \psi \iota(w')\right) ^{-1}\right)=\overline{\pi}(w_1w_g^{-1}).
%$$
Since ${\pi}$ is a homomorphism we can conclude that  $w_1w_g^{-1} \ne 1$ and hence  $ \tau'_k(i'\psi\iota(w))[\gamma_1, \gamma_g]=(1-\gamma_1)(1- \gamma_g)i_*(w_1w_g^{-1}) \ne 0$.  This shows that \linebreak $\ker(\overline{\rho})=0$ and hence $\overline{\rho}$ is injective. 

\end{proof}

\begin{theorem}\label{T:infinitelygenerated}
Let $S$ be an orientable surface with genus $g \ge 3$.  Then the successive quotients of the Magnus filtration $\frac{M_k(S)}{M_{k+1}(S)}$ surject onto an infinite rank torsion free abelian subgroup of $\frac{F'_k}{F'_{k+1}}$ via the map  
$$
\frac{M_k(S)}{M_{k+1}(S)} \stackrel{\tau'_k(-)[c_6,c_2]}{\longrightarrow}\frac{F'_k}{F'_{k+1}}
$$
where $c_2$ and $c_6$ are the generators of $F$ illustrated in \figr{infgenbasis}.
\end{theorem}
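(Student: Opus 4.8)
The plan is as follows. Since $F'$ is a (countably infinitely generated) free group and $F'_{k+1}$ is the $(k+1)$st term of its lower central series, \cor{basistheorem} shows that $F'_k/F'_{k+1}$ is a \emph{free} abelian group; hence every subgroup of it is torsion free, so the torsion-free half of the conclusion is automatic, and it suffices to produce a sequence of mapping classes $f_1,f_2,\dots\in M_k(S)$ whose images $\tau'_k(f_m)[c_6,c_2]\in F'_k/F'_{k+1}$ are $\mathbb Z$-linearly independent. The classes $f_m$ will be obtained from the braid construction of \lem{inclusionnonzero} by conjugating a single seed by powers of a well-chosen mapping class, using that $M_k(S)=\ker\bigl(Mod(S)\to\aut(F/F'_k)\bigr)$ is normal in $Mod(S)$. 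First I would record the conjugation rule: since $F'$, $F''$, $F'_k$ and $F'_{k+1}$ are characteristic in $F$, every $\phi\in Mod(S)$ induces automorphisms of $F'/F''$ and of $F'_k/F'_{k+1}$, and unwinding the definition $\tau'_k(g)([x])=[g_*(x)x^{-1}]$ gives, for $f\in M_k(S)$,
$$
\tau'_k(\phi f\phi^{-1})([x])=\phi_*\bigl(\tau'_k(f)(\phi_*^{-1}[x])\bigr).
$$

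Next I would fix the separating embedding $i\colon D\hookrightarrow S$ of \figr{embedding} and the basis of \figr{infgenbasis}, chosen so that $c_2$ and $c_6$ are precisely the basis loops $\gamma_i,\gamma_j$ of \lem{inclusionnonzero} meeting $i(D)$ along arcs $A_2,A_6$. Exactly as in the proof of \thm{containsbraids} (using that $J_k(D)$ is nontrivial for every $k$, via \lem{commutator} and the explicit computation of \lem{identity}), I would pick a seed $f_0\in M_k(S)$ of the form produced by \lem{inclusion}, with
$$
x_0:=\tau'_k(f_0)[c_6,c_2]=(1-c_6)(1-c_2)\bigl(i_*(w_6)-i_*(w_2)\bigr),\qquad w_6\neq w_2\ \text{in}\ \tfrac{\pi_1(D)_k}{\pi_1(D)_{k+1}},
$$
so that $x_0\neq 0$ by \lem{braidsinject} together with the absence of $(1-c)$-torsion in $\tfrac{F'_k}{F'_{k+1}}$ (\lem{torsion}).

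I would then conjugate by a carefully chosen mapping class $\phi$ — a handle-translation dragging the arcs $A_2, A_6$ once around a handle of $S$ disjoint from the rest of $i(D)$ — arranged, using the specific basis of \figr{infgenbasis}, so that $\phi_*$ fixes each of $c_2,c_6$ up to conjugacy (hence fixes their classes in $H_1$) while its action on $\tfrac{F'_k}{F'_{k+1}}$ is a nontrivial \emph{unipotent translation} of the generators occurring in $x_0$. Setting $f_m:=\phi^m f_0\phi^{-m}\in M_k(S)$ (normality), the conjugation rule, bilinearity of commutators in $\tfrac{F'_k}{F'_{k+1}}$ (\prop{commutatoridentity}), and the formula for $\tau'_k(f_0)$ on an arbitrary commutator $[c_a,c_b]$ (read off from the computations in \lem{inclusion} and \lem{inclusionnonzero}) reduce $\tau'_k(f_m)[c_6,c_2]$ to an explicit element of the $\mathbb Z[F/F']$-module $\tfrac{F'_k}{F'_{k+1}}$ of the form $\phi_*^m(x_0)=(1-c_6)(1-c_2)\,p_m(c_\ell)\cdot i_*(w_6w_2^{-1})$, where $p_m$ records the $m$-fold shift along the handle coordinate $c_\ell$. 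The main step is to show these are $\mathbb Z$-independent: here I would expand $\phi_*^m(x_0)$ in the Tomaszewski basis of $F'$ (\thm{Basis}) and the ordered basic commutators used in the proof of \lem{torsion}, and observe that the handle-translation shifts exactly one lattice coordinate in the model $H_1(\widetilde X,\widetilde v)$ of that proof, so that $\phi_*^m(x_0)$ acquires a \emph{distinct} leading basic commutator for each $m$; a triangular (Vandermonde-type) argument on these leading terms then yields independence.

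Finally, the image of $\tau'_k(-)[c_6,c_2]\colon \tfrac{M_k(S)}{M_{k+1}(S)}\to\tfrac{F'_k}{F'_{k+1}}$ contains the infinite-rank free abelian group spanned by $\{\phi_*^m(x_0):m\ge 0\}$; since this image sits inside the free abelian group $\tfrac{F'_k}{F'_{k+1}}$ it is torsion free, and $\tfrac{M_k(S)}{M_{k+1}(S)}$ surjects onto it by construction. I expect the genuine obstacle to be the independence step: $\phi_*$ does not merely permute a free basis of $F'$, so one must track carefully how the Tomaszewski generators and the ordering of \lem{torsion} interact with the iterated substitution. The role of the particular basis in \figr{infgenbasis} (and the particular choice of $\phi$) is exactly to make this bookkeeping collapse to a single monotone lattice coordinate, which is where the hypothesis $g\ge 3$ — giving enough room for the arcs $A_2$, $A_6$ and a disjoint translating handle — is used.
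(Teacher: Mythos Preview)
Your route is genuinely different from the paper's. The paper does not conjugate a single seed by powers of a mapping class; it instead uses an infinite family of separating embeddings $i_n\colon D_3\hookrightarrow S$ of a \emph{three}-holed disk (the Church--Farb embeddings onto a neighborhood of $\gamma\cup\delta_n$, with $\delta_n$ winding $n$ times around one handle), pushes a single braid commutator $c^k=[\cdots[[x_2,x_1],x_1],\cdots,x_1]\in E(2)_k$ to $f_k\in J_k(D_3)$, and then computes $\tau'_k(i_n'(f_k))[c_6,c_2]=(1-c_6)(1-c_2)\,i_{n*}\bigl(w_3^k(w_1^k)^{-1}\bigr)$ directly for each $n$ via \lem{inclusionnonzero}. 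The independence comes from an explicit expansion of $i_{n*}(y_2)$ in the Tomaszewski basis: after quotienting by the basic commutators involving any generator that occurs in some $i_{n*}(y_3)$, the element $i_{n*}(j\pi w_3^k)$ contains the basic commutator $\bigl[\cdots\bigl[{}^{c_3c_5c_6^{\,n-1}c_4^{-1}}[c_4,c_6],[c_2,c_1]\bigr],\cdots,[c_2,c_1]\bigr]$, and this term appears in no $i_{m*}(j\pi w_3^k)$ with $m<n$, giving a triangular system.

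Your conjugation scheme has a real gap at the step where you pass from the conjugation identity to the formula $\phi_*^m(x_0)$. The identity you wrote is correct, but it yields $\tau'_k(\phi^m f_0\phi^{-m})[c_6,c_2]=\phi_*^m\bigl(\tau'_k(f_0)(\phi_*^{-m}[c_6,c_2])\bigr)$, and fixing $c_2,c_6$ \emph{up to conjugacy} (equivalently, in $H_1(S)$) does not fix $[c_6,c_2]$ as an element of $F'/F''$; the $\mathbb Z[F/F']$-module structure sees the conjugators. More seriously, your description of $\phi$ is in tension with itself: if $\phi$ is supported on a handle disjoint from $i(D)$ then $\phi$ commutes with $f_0$ (they have disjoint support), so $\phi^m f_0\phi^{-m}=f_0$ for every $m$ and nothing is gained; if instead $\phi$ genuinely moves the embedded disk so as to produce distinct conjugates, then neither $\phi_*^{-m}[c_6,c_2]$ nor $i_*(w)$ is fixed, and there is no reason for the answer to take the clean form $(1-c_6)(1-c_2)\,p_m(c_\ell)\cdot i_*(w)$ you assert. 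The paper's device of varying the embedding and computing $i_{n*}$ explicitly is exactly what circumvents this: it replaces the uncontrolled inner term $\phi_*^{-m}[c_6,c_2]$ by a direct calculation of $i_{n*}(y_1),i_{n*}(y_2),i_{n*}(y_3)$ in the basis, from which the leading-term independence can be read off.
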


\begin{proof}
Let $\gamma$ and $\delta_n$ be the simple closed curves on $S$ shown in \figr{cfrep}.  Let $i_n:D \rightarrow S$ be the embedding which sends the 3 holed disk $D$ to a neighborhood $\gamma \cup \delta_n$.  This set of embeddings of the disk onto $S$ was used by Church and Farb in \cite{CF}, Theorem 3.2 to produce an infinite family of mapping classes in $Mag(S)$.  We employ the same embeddings to produce an infinite family of mapping classes in $M_k(S)$.

\begin{figure}[h] %  figure placement: here, top, bottom, or page
   \centering
   \includegraphics[width=4.5in]{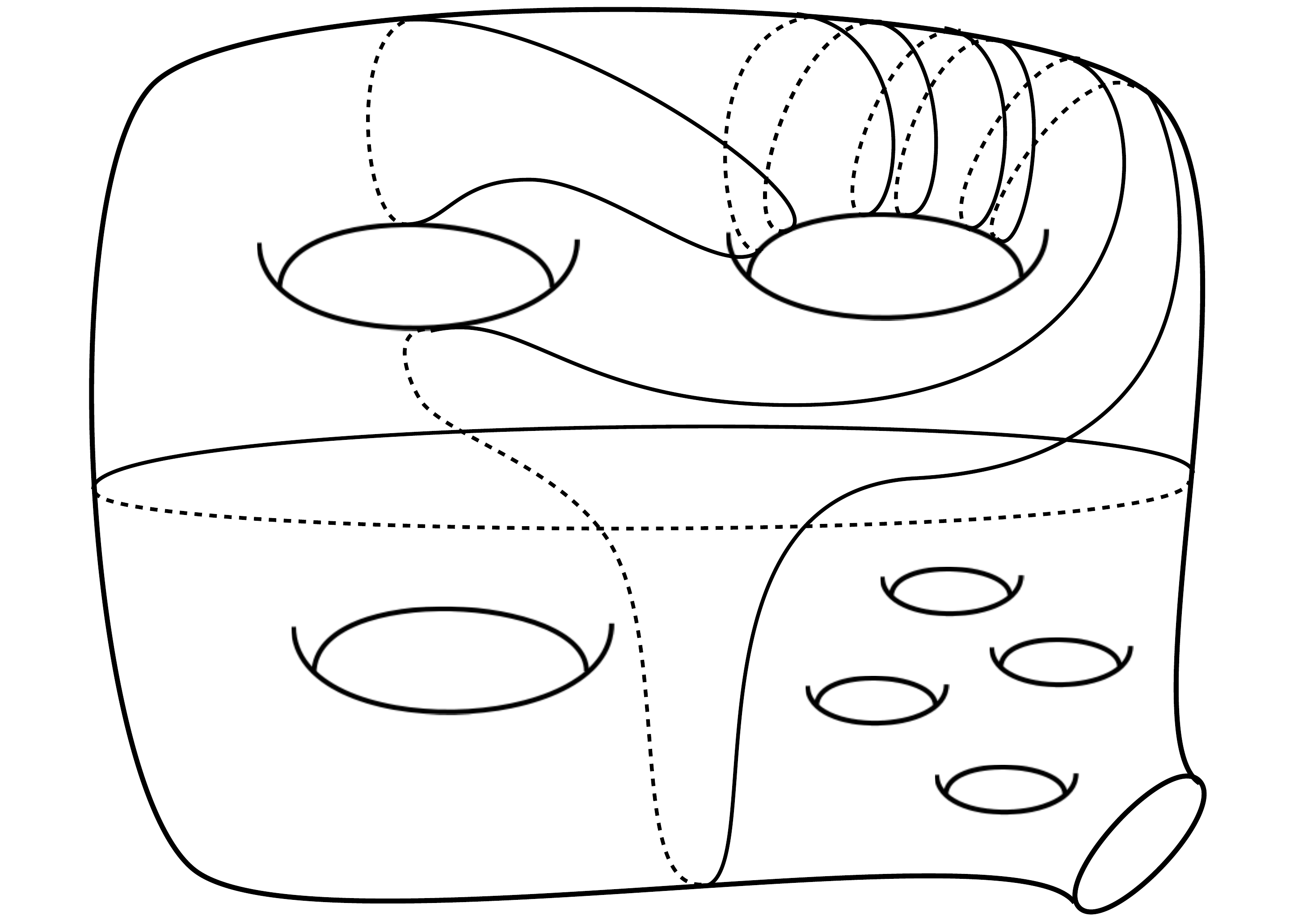}
   \put(-248,195){$\delta_3$}
   \put(-280,122){$\gamma$}
   \caption[The embeddings $i_n:D \rightarrow S$.]{Pictured above are two simple closed curves $\gamma$ and $\delta_3$.  The curve $\delta_n$ wraps $n$ times around the upper right handle.  We consider disks with 3 holes embedded by maps $i_n$  which send $D$ to a neighborhood of $\gamma \cup \delta_n$. }
   \label{F:cfrep}
\end{figure}

Let the free group $E(2)$ be generated by $\{x_1,x_2\}$.  Consider the commutator $c^k=[ \cdots [[x_2,x_1],x_1], \cdots ,x_1]\in E(2)_k$  (commutator with $x_1$ $k-1$ times).  By \cite{GH} Theorem 1.1, this commutator yields a nontrivial element of $\frac{J_k(D)}{J_{k+1}(D)}$ through the composition:
$$
E(2) \stackrel{\iota}{\hookrightarrow} P(3) \stackrel{\psi}{\hookrightarrow} Mod(D_3)
$$ 
Let $f_k$ be the mapping class in $\frac{J_k(D)}{J_{k+1}(D)}$ which arises from the commutator \linebreak $c^k$: $f_k=\iota \psi(c^k)$.  Let $i_n'(f_k)$ be the mapping class of $S$ resulting from extending $f_k$ by the identity on $S$ using the embedding $i_n$.

Each embedding $i_n:D \rightarrow S$ yields an infinite family of elements \linebreak $\tau'_k(i_n'(f_k))[c_6, c_2]$.  We will show that for each $k$ the set \linebreak $\{ \tau'_k(i_n'(f_k))[c_6, c_2]|n \in \N\}$ is independent in $\frac{F'_k}{F'_{k+1}}$ using the basis theorems developed in \sec{basistheorems}.

We begin by choosing a basis for $F$ for our computations.  Our chosen basis is illustrated in \figr{infgenbasis}.  Note that the generators $c_2$ and $c_6$ intersect each embedding $i_n(D)$ along the arcs $A_1$ and $A_3$ respectively.  Hence, we may compute $\tau'_k(i_n'(f_k))[c_6, c_2]$ as in \lem{inclusionnonzero}.

\begin{figure}[h] %  figure placement: here, top, bottom, or page
\centerline{\includegraphics[width=5.35in]{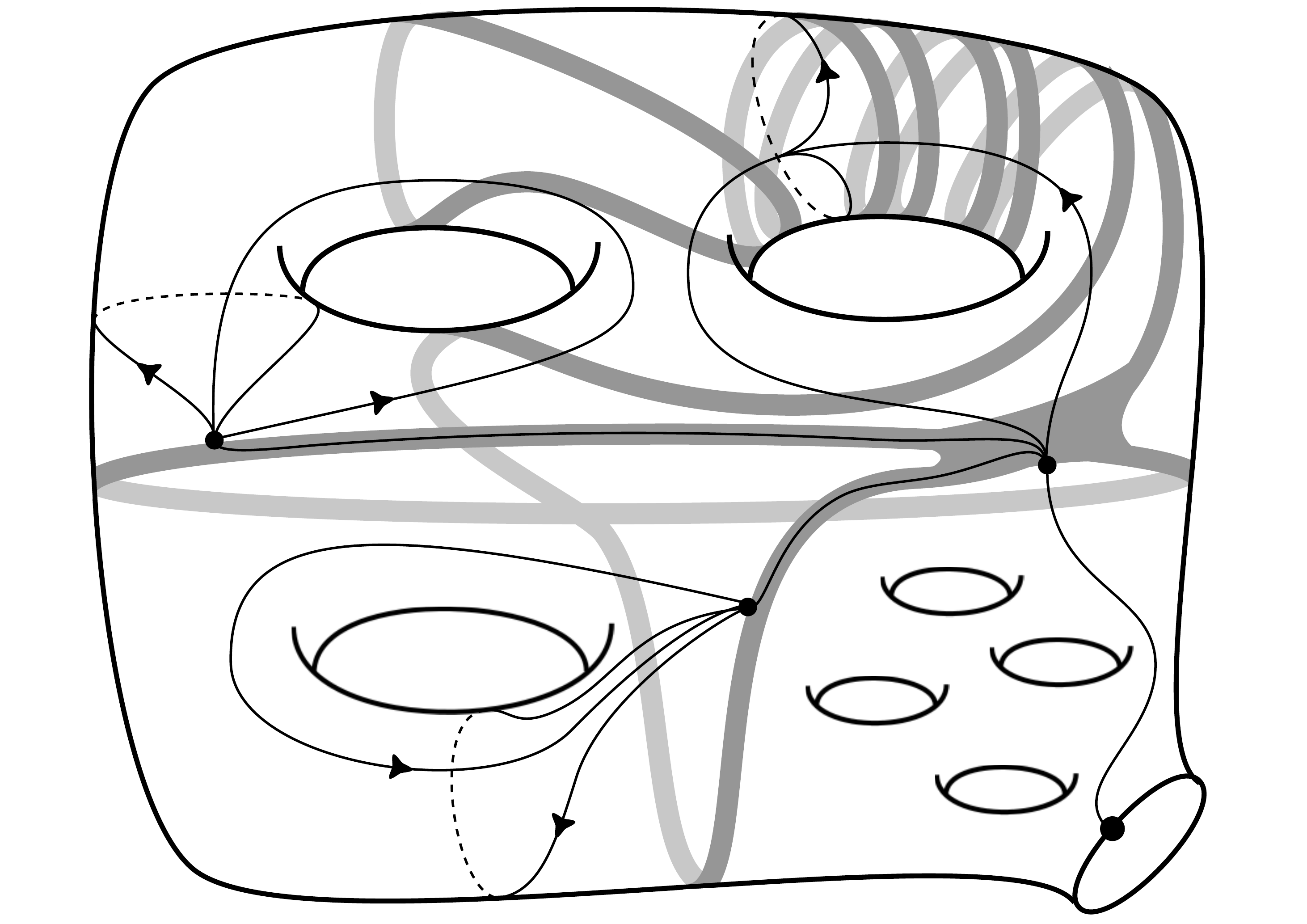}
   \put(-63,104){$C$}
   \put(-75,131){$p_0$}
   \put(-58,30){$*$}
   \put(-340,145){$p_3$}
   \put(-173,101){$p_1$}
	\put(-265,25){$c_2$}
	\put(-327,60){$c_1$}
		\put(-353,189){$c_6$}
		\put(-295,155){$c_5$}
	\put(-177,261){$c_4$}
	\put(-64,170){$c_3$}
		\put(-170,120){$A_1$}
		\put(-220,150){$A_3$}}
   \caption[The basis $\{c_1,c_2,c_3, c_4, c_5,c_6, \dots,c_{2g}\}$ chosen for computation of the Magnus homomorphisms. ]{The subsurface $i_3(D) \subset S$ is shown in grey.  The figure illustrates the basis $\{c_1,c_2,c_3, c_4, c_5,c_6, \dots,c_{2g}\}$ chosen for computation of the Magnus homomorphisms.  Note that $c_2$ and $c_6$ intersect $i_n(D)$ along the arcs $A_i$ as in \protect\lem{inclusionnonzero}.}
   \label{F:infgenbasis}
\end{figure}

By \lem{inclusionnonzero}, 
\begin{align*}
\tau'_k(i_n'(f_k))[c_6, c_2]=(1-c_6)(1-c_2)i_{n*}(w_3^k  (w_1^k)^{-1})
\end{align*}
where $w_i^k=f_k(A_i) \overline{A_i}$.  To show the set $\{\tau'_k(i_n'(f_k))[c_6, c_2]|n \in \N\}$ is independent we must compute the elements $i_{n*}(w_1)$ and $i_{n*}(w_3)$.  By \lem{torsion} the set  $\{\tau'_k(i_n'(f_k))[c_6, c_2]|n \in \N\}$ is independent if $\{i_{n*}(w_3^k (w_1^k)^{-1})|n \in \N\}$ is an independent set in $\frac{F'_k}{F'_{k+1}}$.  

We impose the following ordering the elements of our basis for $F'$: \linebreak $c_1<c_2<c_3<c_4<c_5<c_6$.  Then by \thm{Basis} the set \linebreak $B=\{\lup{w_{i,j}}[c_i, c_j] | w_{i,j} \in H_1 \left(E(c_1, \dots, c_j)\right)\}$ is a basis for $F'$.  By \cor{basistheorem}, for each $n \in N$, $i_{n*}(w_3^k (w_1^k)^{-1})$ can be expressed as a product of basic commutators of weight $k$ in the generators of $B$.  To show that the set $\{i_{n*}(w_3^k (w_1^k)^{-1})\}_{n \in \N}$ is independent we work towards expressing the elements as basic commutators in our basis $B$.

%We will first determine the generators in the set $A$.  
We denote the generators of $G$ which loop around the 3 interior boundary components of $D$  counterclockwise by $y_1,y_2,y_3$ as in \lem{identity}.  
As shown in \cite{CF}, Theorem 3.2, for the embedding $i_n:D \rightarrow S$ the generators of $G$ map to the following elements of $F$ written in terms of the basis chosen basis for $F$:

\begin{eqnarray*}
i_{n*}(y_1)&=& [c_2, c_1]\\
i_{n*}(y_2)&=& \lup{[c_5,c_6][c_3, c_4] c_4} [c_3 c_4^{-1} c_3^{-1} c_6, c_5 c_6^{n}]\\
i_{n*}(y_3)&=&[c_4, c_5 c_6^n c_3].
\end{eqnarray*}
Again, we have allowed a change of basepoint from $\pi_1(S,p_0)$ to $\pi_1(S, *)$ in this computation.
 
Recall that $\pi:G(3) \rightarrow G(2)$ is the map obtained by taking the quotient by the normal subgroup generated by $y_3$.  The retract $\pi:G(3) \rightarrow G(2)$ induces a retract of the lower central series quotients $\overline{\pi}:  \frac{G(3)_k}{G(3)_{k+1}} \rightarrow \frac{G(2)_k}{G(2)_{k+1}}$.  Let \linebreak $j: G(2) \rightarrow G(3)$ be the natural inclusion map.  Thus $\overline{\pi} \overline{j}:  \frac{G(2)_k}{G(2)_{k+1}}\rightarrow \frac{G(2)_k}{G(2)_{k+1}}$ is the identity map.
By \lem{identity} we have $\pi(w_1^k)=1$ and \linebreak $\pi(w_3^k)=[ \cdots [[y_2,y_1], y_1] \cdots,y_1]$.  Thus $\pi(w_3(w_1)^{-1})= \pi(w_3)$.  It then follows that $w_3^k(w_1^k)^{-1}=j \pi(w_3^k) \eta^k$ for some $\eta^k \in \ker \pi$.

We now compute the elements $i_{n*}j \pi(w_3^k (w_1^k)^{-1})$ using the above expressions for $i_{n*}(y_1)$ and $i_{n*}(y_2)$.
\begin{align*}
&i_{n*}j\pi(w_3^k(w_1^k)^{-1})=i_{n*}j\pi(w_3^k)\\
&=\left( i_{n*}\left([ \cdots [[y_2,y_1],y_1], \cdots,y_1]\right)\right)\\
&=\left([ \cdots [[i_{n*}(y_2),i_{n*}(y_1)], i_{n*}(y_1)], \cdots,i_{n*}(y_1)]\right)\\
&=\left(\Big{[} \cdots \Big{[}\Big{[}\lup{[c_5,c_6][c_3, c_4] c_4} [c_3 c_4^{-1} c_3^{-1} c_6, c_5 c_6^{n}],[c_2, c_1]\Big{]}, [c_2, c_1]\Big{]}, \cdots,[c_2, c_1]\Big{]}\right)&
\end{align*}

%Thus we have computed $i_{n*}(\pi(w_3^k (w_1^k)^{-1}))$ as
%\begin{align*}
%&i_{n*}'(\pi(w_3^k (w_1^k)^{-1}))=i_{n*}'\pi(w_3^k)\\
%&=\pi' \left(\Big{[} \cdots \Big{[}\Big{[}\lup{[c_5,c_6][c_3, c_4] c_4} [c_3 c_4^{-1} c_3^{-1} c_6, c_5 c_6^{n}],[c_2, c_1]\Big{]}, [c_2, c_1]\Big{]}, \cdots,[c_2, c_1]\Big{]}\right)\\
%&=\Big{[} \cdots \Big{[}\Big{[}\pi'\left(\lup{[c_5,c_6][c_3, c_4] c_4} [c_3 c_4^{-1} c_3^{-1} c_6, c_5 c_6^{n}]\right),\pi' \left([c_2, c_1]\right)\Big{]}, \pi'\left([c_2, c_1]\right)\Big{]}, \cdots,\pi'\left([c_2, c_1]\right)\Big{]}
%\end{align*}

We will compute the elements $i_{n*}j \pi(w_3^k(w_1^k)^{-1})$ explicitly in terms of this basis $B$.  To do this we must reduce the expressions for $i_{n*}(y_2)$ to \linebreak products of basis elements of $F'$. Employing the commutator identity \linebreak $[ga,b]=\lup{g}[a,b][g,b]$ we can re-write the element $[c_3 c_4^{-1} c_3^{-1} c_6, c_5 c_6^{n}]$ as follows:
\begin{align*}
i_{n*}(y_2)&=[c_3 c_4^{-1} c_3^{-1} c_6, c_5 c_6^{n}]\\
&= \lup{c_3}[c_4^{-1} c_3^{-1} c_6, c_5 c_6^{n}][c_3, c_5 c_6^{n}]\\
&= \lup{c_3c_4^{-1}}[c_3^{-1} c_6, c_5 c_6^{n}]\lup{c_3}[c_4^{-1}, c_5 c_6^{n}][c_3, c_5 c_6^{n}]\\
&= \lup{c_3c_4^{-1}c_3^{-1}}[c_6, c_5 c_6^{n}]\lup{c_3c_4^{-1}}[c_3^{-1}, c_5 c_6^{n}]\lup{c_3}[c_4^{-1}, c_5 c_6^{n}][c_3, c_5 c_6^{n}]\\
\end{align*}

Using the commutator identity $[a,vb]=[a,v] \lup{v}[a,b]$, for any element $c$ we have:
\begin{align*}
[c, c_5 c_6^{n}]&=[c, c_5] \lup{c_5}[c,c_6^{n}]\\
&=[c, c_5] \lup{c_5}[c,c_6]\lup{c_5 c_6}[c,c_6^{n-1}]\\
&=[c, c_5] \lup{c_5}[c,c_6] \lup{c_5 c_6}[c,c_6]\cdots \lup{c_5 c_6^{n-1}}[c,c_6]
\end{align*}

Using this, our original expression becomes:
\begin{align*}
i_{n*}(y_2)=&\lup{c_3c_4^{-1}c_3^{-1}}[c_6, c_5] \lup{c_3c_4^{-1}c_3^{-1}c_5}[c_6,c_6] \lup{c_3c_4^{-1}c_3^{-1}c_5 c_6}[c_6,c_6]\cdots \lup{c_3c_4^{-1}c_3^{-1}c_5 c_6^{n-1}}[c_6,c_6]\\
&\lup{c_3c_4^{-1}}[c_3^{-1}, c_5] \lup{c_3c_4^{-1}c_5}[c_3^{-1},c_6] \lup{c_3c_4^{-1}c_5 c_6}[c_3^{-1},c_6]\cdots \lup{c_3c_4^{-1}c_5 c_6^{n-1}}[c_3^{-1},c_6]\\
&\lup{c_3}[c_4^{-1}, c_5] \lup{c_3c_5}[c_4^{-1},c_6] \lup{c_3c_5 c_6}[c_4^{-1},c_6]\cdots \lup{c_3c_5 c_6^{n-1}}[c_4^{-1},c_6]\\
&[c_3, c_5] \lup{c_5}[c_3,c_6] \lup{c_5 c_6}[c_3,c_6]\cdots \lup{c_5 c_6^{n-1}}[c_3,c_6].
\end{align*}
As $[c_6,c_6]=1$ this expression automatically reduces.  Using the identity $[a^{-1},b]=\lup{a^{-1}}[b,a]$ we simplify further to the following expression
\begin{align*}
i_{n*}(y_2)=&\lup{c_3c_4^{-1}c_3^{-1}}[c_6, c_5]\\
& \lup{c_3c_4^{-1}c_3^{-1}}[c_5, c_3] \lup{c_3c_4^{-1}c_5c_3^{-1}}[c_6,c_3] \lup{c_3c_4^{-1}c_5 c_6c_3^{-1}}[c_6,c_3]\cdots \lup{c_3c_4^{-1}c_5 c_6^{n-1}c_3^{-1}}[c_6,c_3]\\
&\lup{c_3c_4^{-1}}[c_5, c_4] \lup{c_3c_5c_4^{-1}}[c_6,c_4] \lup{c_3c_5 c_6c_4^{-1}}[c_6,c_4]\cdots \lup{c_3c_5 c_6^{n-1}c_4^{-1}}[c_6,c_4]\\
&[c_3, c_5] \lup{c_5}[c_3,c_6] \lup{c_5 c_6}[c_3,c_6]\cdots \lup{c_5 c_6^{n-1}}[c_3,c_6].
\end{align*}

Noting that $[a,b]=[b,a]^{-1}$ we can now write $i_{n*}(y_2)$ (additively) as follows:

\begin{align*}
i_{n*}(y_2)=&-\lup{c_3c_4^{-1}c_3^{-1}}[c_5, c_6]-\lup{c_3c_4^{-1}c_3^{-1}}[c_3, c_5] - \sum_{i=0}^{n-1}\lup{c_3c_4^{-1}c_5c_6^ic_3^{-1}}[c_3,c_6]\\
&-\lup{c_3c_4^{-1}}[c_4, c_5]-\sum_{i=0}^{n-1}\lup{c_3c_5c_6^i c_4^{-1}}[c_4,c_6] +[c_3, c_5]+ \sum_{i=0}^{n-1}\lup{c_5c_6^i}[c_3,c_6].
\end{align*}

%To show that $i_{n*}(w_3^k(w_1^k)^{-1})$ are independent, consider a product $\prod i_{n*}(w_3^k(w_1^k)^{-1})^{a_n}$.  By our definition of $\eta_k$, we have the following equalities.
%\begin{align*}
%\prod i_{n*}(w_3^k(w_1^k)^{-1})^{\alpha_n}&= \prod i_{n*}\left(j \pi(w_3^k) \eta_k\right)^{\alpha_n}\\
%&= \prod i_{n*}\left(j \pi(w_3^k)\right)^{\alpha_n} \prod  i_{n*}\left( \eta_k\right)^{\alpha_n}
%\end{align*}
%We will show that the set $\{i_{n*}\left(j \pi(w_3^k)\right)| n \in \N\}$ is an independent set modulo $i_{n*}(\ker \overline{\pi})$.  We then use this to show the above product to be nonzero.

By \prop{commutatoridentity}, for a fixed $n$ we may write $i_{n*}(j \pi w_3^k)$ in terms of basic commutators in the generators of $B$ as follows.
%\begin{align*}
%i_{n*}\left(j \pi w_3^k \right)=&\left[ \cdots \left[\lup{c_3c_4^{-1}c_3^{-1}}[c_6, c_5], [c_2,c_1]\right], \cdots [c_2,c_1] \right]\\
%&\left[ \cdots \left[\lup{c_3c_4^{-1}c_3^{-1}}[c_5, c_3], [c_2,c_1]\right], \cdots [c_2,c_1] \right]\\ 
%&\prod_{i=0}^{n-1}    \left[ \cdots \left[\lup{c_3c_4^{-1}c_5 c_6^i c_3^{-1}}[c_6,c_3], [c_2,c_1]\right], \cdots [c_2,c_1] \right]\\
%%%%%
%&\left[ \cdots \left[\lup{c_3c_4^{-1}}[c_5, c_4], [c_2,c_1]\right], \cdots [c_2,c_1] \right] \\
%&\prod_{i=0}^{n-1}       \left[ \cdots \left[\lup{c_3c_5c_6^i c_4^{-1}}[c_6,c_4], [c_2,c_1]\right], \cdots [c_2,c_1] \right] \\
%%%%%
%&\left[ \cdots \left[[c_3, c_5], [c_2,c_1]\right], \cdots [c_2,c_1] \right]\\
%&\prod_{i=0}^{n-1}\left[ \cdots \left[\lup{c_5c_6^i}[c_3,c_6], [c_2,c_1]\right], \cdots [c_2,c_1] \right]
%\end{align*}
\begin{align*}
i_{n*}\left(j \pi w_3^k \right)=&-\left[ \cdots \left[\lup{c_3c_4^{-1}c_3^{-1}}[c_5, c_6], [c_2,c_1]\right], \cdots [c_2,c_1] \right]\\
&-\left[ \cdots \left[\lup{c_3c_4^{-1}c_3^{-1}}[c_3, c_5], [c_2,c_1]\right], \cdots [c_2,c_1] \right]\\
&- \sum_{i=0}^{n-1}\left[ \cdots \left[\lup{c_3c_4^{-1}c_5c_6^ic_3^{-1}}[c_3,c_6], [c_2,c_1]\right], \cdots [c_2,c_1] \right]\\
&-\left[ \cdots \left[\lup{c_3c_4^{-1}}[c_4, c_5], [c_2,c_1]\right], \cdots [c_2,c_1] \right]\\
&-\sum_{i=0}^{n-1}\left[ \cdots \left[\lup{c_3c_5c_6^i c_4^{-1}}[c_4,c_6] , [c_2,c_1]\right], \cdots [c_2,c_1] \right]\\
&+\left[ \cdots \left[[c_3, c_5], [c_2,c_1]\right], \cdots [c_2,c_1] \right]\\
&+ \sum_{i=0}^{n-1}\left[ \cdots \left[\lup{c_5c_6^i}[c_3,c_6], [c_2,c_1]\right], \cdots [c_2,c_1] \right].
\end{align*}

By \prop{splitting}, $\ker \overline{\pi}$ is generated by weight $k$ basic commutators in the generators $y_1,y_2, y_3$ with $y_3$ in at least one entry.  For convenience of notation, let us denote the elements of $B$ by $a_i$.  Let $A \subset B$ be the set of all elements $a_i$ such that $a_i$ appears with a nonzero coefficient in the expression for $i_{n*}(y_3)$ for some $n \in N$ when written in terms of the basis $B$.  Let $Y$ be the subgroup of $\frac{F'_k}{F'_{k+1}}$ generated by basic commutators with an entry from the set $A$.  Note that by construction, $i_{n*}(\ker\overline{\pi}) \subset Y$ for each $n$.  Hence if the elements $i_{n*}(w_3^k (w_1^k)^{-1})$ are independent in $\frac{F'_k/F'_{k+1}}{Y}$ they are also independent in ${F'_k/F'_{k+1}}$.
  Also notice that by construction, the group $\frac{F'_k/F'_{k+1}}{Y}$ is a free abelian group generated by basic commutators in elements of $B \setminus A$.  To consider whether the elements $i_{n*}( w_3^k (w_1^k)^{-1})$ are independent in $\frac{F'_k/F'_{k+1}}{Y}$ we must first determine the set $A$.

We begin by simplifying the expression for $i_{n*}(y_3)$ using the commutator identity $[a,vb]=[a,v] \lup{v}[a,b]$ as follows:
\begin{align*}
i_{n*}(y_3)&=[c_4, c_5 c_6^n c_3]\\
&=[c_4,c_5] \lup{c_5}[c_4,c_6^nc_3]\\
&=[c_4,c_5] \lup{c_5}[c_4,c_6^n]\lup{c_5c_6^{n}}[c_4,c_3]\\
&=[c_4,c_5] \lup{c_5}[c_4,c_6]\lup{c_5c_6}[c_4,c_6^{n-1}]\lup{c_5c_6^{n}}[c_4,c_3]\\
&=[c_4,c_5] \lup{c_5}[c_4,c_6] \lup{c_5c_6}[c_4,c_6] \cdots \lup{c_5c_6^{n-1}}[c_4,c_6] \lup{c_5c_6^{n}}[c_4,c_3].
\end{align*}
Note that the term $\lup{c_5c_6^{n}}[c_4,c_3]$ is not an element of our chosen basis for $H_1(F', \Z)$ as $c_6>c_4$.  In order to express $i_{n*}(y_3)$ in terms of our basis for $H_1(F',\Z)$ we rewrite this term as follows:

\begin{align*}
\lup{c_5c_6^{n}}[c_4,c_3]=&[c_3,c_4][c_4,c_5][c_3,c_5] \lup{c_4}[c_1,c_5] \lup{c_3}[c_4,c_5] \prod_{i=0}^{n-1} \lup{c_5c_6^i}[c_3,c_6] \\
&\prod_{i=0}^{n-1}\lup{c_5c_6^i}[c_3,c_6]
\prod_{i=0}^{n-1}\lup{c_5c_3c_6^i}[c_4,c_6] \prod_{i=0}^{n-1}\lup{c_5c_4c_6^i}[c_3,c_6].
\end{align*}

Collecting the basis elements of $B$ that occur in the above expressions for $i_{n*}(y_3), n \in N$ we find $A$ to be the following set:
$$
A=\left\{ \left.
\begin{array}{l}
[c_4,c_5], \lup{c_5}[c_4,c_6], \lup{c_5c_6^{i}}[c_4,c_6], \left[c_3,c_4\right],[c_4,c_5],[c_3,c_5], \\
\lup{c_4}[c_1,c_5],\lup{c_3}[c_4,c_5],\lup{c_5c_6^i}[c_3,c_6], \lup{c_5c_3c_6^i}[c_4,c_6],\lup{c_5c_4c_6^i}[c_3,c_6]
\end{array}
\right| i \in \N \right\}
$$

Note that by construction, when viewed as elements of $\frac{F'/F''}{Y}$,
\begin{align*}
i_{n*}(w_3^k(w_1^k)^{-1})&=i_{n*}(j \pi(w_3^k) \eta^k)\\
&=i_{n*}(j \pi(w_3^k)).
\end{align*}
Thus the elements $i_{n*}(w_3^k(w_1^k)^{-1})$ are independent in $F'/F''$ if the elements $i_{n*}(j \pi(w_3^k))$ are independent in $\frac{F'/F''}{Y}$.

Modulo $Y$, $i_{n*}(j \pi (w_3^k))$ can be written as follows:
%\begin{align*}
%i_{n*}\left(j \pi w_3^k \right)=&\left[ \cdots \left[\lup{c_3c_4^{-1}c_3^{-1}}[c_6, c_5], [c_2,c_1]\right], \cdots [c_2,c_1] \right]\\
%&\left[ \cdots \left[\lup{c_3c_4^{-1}c_3^{-1}}[c_5, c_3], [c_2,c_1]\right], \cdots [c_2,c_1] \right]\\ 
%&\prod_{i=0}^{n-1}    \left[ \cdots \left[\lup{c_3c_4^{-1}c_5 c_6^i c_3^{-1}}[c_6,c_3], [c_2,c_1]\right], \cdots [c_2,c_1] \right]\\
%%%%%
%&\left[ \cdots \left[\lup{c_3c_4^{-1}}[c_5, c_4], [c_2,c_1]\right], \cdots [c_2,c_1] \right] \\
%&\prod_{i=0}^{n-1}       \left[ \cdots \left[\lup{c_3c_5c_6^i c_4^{-1}}[c_6,c_4], [c_2,c_1]\right], \cdots [c_2,c_1] \right] \\
%\end{align*}
\begin{align*}
i_{n*}\left(j \pi w_3^k \right)=&-\left[ \cdots \left[\lup{c_3c_4^{-1}c_3^{-1}}[c_5, c_6], [c_2,c_1]\right], \cdots [c_2,c_1] \right]\\
&-\left[ \cdots \left[\lup{c_3c_4^{-1}c_3^{-1}}[c_3, c_5], [c_2,c_1]\right], \cdots [c_2,c_1] \right]\\
&- \sum_{i=0}^{n-1}\left[ \cdots \left[\lup{c_3c_4^{-1}c_5c_6^ic_3^{-1}}[c_3,c_6], [c_2,c_1]\right], \cdots [c_2,c_1] \right]\\
&-\left[ \cdots \left[\lup{c_3c_4^{-1}}[c_4, c_5], [c_2,c_1]\right], \cdots [c_2,c_1] \right]\\
&-\sum_{i=0}^{n-1}\left[ \cdots \left[\lup{c_3c_5c_6^i c_4^{-1}}[c_4,c_6] , [c_2,c_1]\right], \cdots [c_2,c_1] \right].
\end{align*}

Consider a finite sum of these elements.  In $\frac{F'_k/F'_{k+1}}{Y}$, this sum can be written as $\sum_{m=1}^M \alpha_m i_{n_m *} (j \pi (w_3^k))$ where $\alpha_m \ne 0$ and $n_{m}<n_{m+1}$ for all $m$.  The $M^{th}$ term of this product is the only term containing a multiple of the basis element  $\left[ \cdots \left[\lup{c_3c_5c_6^{M-1} c_4^{-1}}[c_6,c_4], [c_2,c_1]\right], \cdots [c_2,c_1] \right]$.  Hence the sum cannot be trivial, and thus the elements $i_{n *} (j \pi (w_3^k))$ must be independent in $\frac{F'_k/F'_{k+1}}{Y}$.  Therefore the elements $i_{n *} (w_3^k(w_1^k)^{-1})$ are independent in $\frac{F'_k}{F'_{k+1}}$.

As the set $\{i_{n*}(w_3^k(w_1^k)^{-1})^{\alpha_n}|n \in \N\}$ is an independent set in $F'_k/F'_{k+1}$, the set $\{(1-c_6)(1-c_2)i_{n*}(w_3^k(w_1^k)^{-1})\}$ is also an independent set.
As $\tau'_k(i_n'(f_k))[c_6,c_2]=(1-c_6)(1-c_2)i_{n*}(w_3^k(w_1^k)^{-1})$, this shows that $\frac{M_k(S)}{M_k(S)}$ surjects onto an infinite rank torsion free abelian subgroup of $F'_k/F'_{k+1}$ via the map $f \mapsto \tau'_k(f)[c_6,c_2]$.

% and let $\pi':F'/F'' \rightarrow \frac{F'/F''}{Y}$ be the natural quotient map.  Note that by construction, $\frac{F'/F''}{Y}$ is a free group on basis $B \setminus A$.  Note also that for each $n$, $\ker{\pi} \subset \pi' i_{n*}$.  Thus we have an induced map $i_{n*}':G_2 \rightarrow \frac{F'/F''}{Y}$ given by the following commutative diagram.
%$$
%\xymatrix{
%G_3 \ar[r]^{\pi} \ar[d]^{i_{n*}} & G_2 \ar@{.>}[d]^{i_{n*}'}\\
%F'/F'' \ar[r]^{\pi'} &\displaystyle \frac{F'/F''}{Y}
%}
%$$
%The group $G_2$ is a free group generated by $y_1$ and $y_2$.  By definition, the map $i_{n*}$ writes an element $v \in G_2$ in terms of the elements $i_{n*}(y_1)$ and $i_{n*}(y_2)$ omitting elements of the basis $A$.
 
\end{proof}

%%%%%%%%%%%%%%%%%%%%%%%%%%%%%%%%%%%%%%%%%%%%%%%%

\end{document}